\renewcommand\expandafter\subsection\expandafter{%
		\expandafter\@fb@secFB\subsection
	}%
\numberwithin{equation}{section}
\theoremstyle{plain}
\newtheorem{thm}{Theorem}[section]
\newtheorem{lem}[thm]{Lemma}
\newtheorem{prop}[thm]{Proposition}
\theoremstyle{definition}
\newtheorem{defn}[thm]{Definition}
\newtheorem{rem}[thm]{Remark}
\definecolor{gr}{rgb}   {0., 0.8, 0. } 
\definecolor{bl}{rgb}   {0., 0.5, 1. } 
\definecolor{mg}{rgb}   {0.7, 0., 0.7} 
\newcommand{\Bk}{\color{black}}
\renewcommand{\div}{\operatorname{div}}
\newcommand{\IC}{\mathbb{C}}
\newcommand{\IN}{\mathbb{N}}
\newcommand{\R}{\mathbb{R}}
\newcommand{\cL}{\mathcal{L}}
\newcommand{\cQ}{\mathcal{Q}}
\newcommand{\one}{{{\bf 1}}}
\newcommand{\n}{\Vert}
\newcommand{\loc}{\operatorname{loc}}
\renewcommand{\L}{\operatorname{L}} 
\newcommand{\C}{\operatorname{C}} 
\renewcommand{\H}{\operatorname{H}} 
\newcommand{\W}{\operatorname{W}}
\newcommand{\T}{\operatorname{T}}
\newcommand{\B}{\operatorname{B}}
\newcommand{\F}{\operatorname{F}}
\newcommand{\E}{\operatorname{E}}%
\newcommand{\V}{\operatorname{V}}
\newcommand{\PW}{\operatorname{PW}}
\DeclareRobustCommand{\Hdot}{\dot{\H}\protect{\vphantom{H}}} 
\newcommand{\I}{\mathrm{\, I \, }}
\renewcommand{\d}{\mathrm{d}} 
\newcommand{\eps}{\varepsilon} 
\renewcommand\Re{\operatorname{Re}}
\def\angle#1#2{\langle #1,#2 \rangle} 
\def\tpsi{\tilde\psi}
\newcommand\reu{\R_{+}\times \R^n}
\newcommand\ree{\R^{n+1}}
\def\Xint#1{\mathchoice
{\XXint\displaystyle\textstyle{#1}}%
{\XXint\textstyle\scriptstyle{#1}}%
{\XXint\scriptstyle\scriptscriptstyle{#1}}%
{\XXint\scriptscriptstyle%
\scriptscriptstyle{#1}}%
\!\int}
\def\XXint#1#2#3{{\setbox0=\hbox{$#1{#2#3}{%
\int}$ }
\vcenter{\hbox{$#2#3$ }}\kern-.6\wd0}}
\def\barint{\,\Xint -} 
\def\bariint{\barint_{} \kern-.4em \barint}
\def\bariiint{\bariint_{} \kern-.4em \barint}
\renewcommand{\iint}{\int_{}\kern-.34em \int} 
\renewcommand{\iiint}{\iint_{}\kern-.34em \int} 
\title[Parabolic SPDE with bounded measurable coefficients]{Stochastic and deterministic parabolic equations with bounded measurable coefficients in space and time: well-posedness and maximal regularity}
\author{Pascal Auscher}
\address{Universit\'e Paris-Saclay, CNRS, Laboratoire de Math\'{e}matiques d'Orsay, 91405 Orsay, France}
\email{pascal.auscher@universite-paris-saclay.fr}
\author{Pierre Portal}
\address{Pierre Portal
Mathematical Sciences Institute
\\
Australian National University
\\
Ngunnawal and Ngambri Country
\\
Canberra ACT 0200, Australia}
\email{pierre.portal@anu.edu.au}
\thanks{This work started during a visit of the first author to the ANU. The author thanks the ANU as well as the  ANU-CNRS  LIA FuMa for support. A CC-BY 4.0 \url{https://creativecommons.
org/licenses/by/4.0/} public copyright license has been applied by the authors to the present document and will be applied to all subsequent
versions up to the Author Accepted Manuscript arising from this
submission. The authors thank Emiel Lorist and Mark Veraar for help with bibliographic issues. 
}
 \date{October 11, 2023}
\keywords{parabolic stochastic PDE, Cauchy problems, Lions operator,  square functions, tent spaces}
\subjclass[2010]{Primary:  60H15 Secondary: 42B37, 35B65, 35B30.} 
\begin{document}

\maketitle

\begin{abstract} 
We establish well-posedness and maximal regularity estimates for linear parabolic SPDE in divergence form involving random coefficients that are merely bounded and measurable in the time, space, and probability variables. To reach this level of generality, and avoid any of the smoothness assumptions used in the literature, we introduce a notion of  pathwise  weak solution and develop a new harmonic analysis toolkit. The latter  includes techniques to prove the boundedness of various maximal regularity operators on relevant spaces of square functions, the parabolic tent spaces $\T^{p}$. Applied to deterministic parabolic PDE in divergence form with real coefficients, our results also give the first extension of Lions maximal regularity theorem on $\L^{2}( \R_{+} \times \R^{n})=\T^{2}$ to $\T^{p}$, for all $1-\varepsilon<p\le \infty$ in this generality. 
\end{abstract}

\tableofcontents

\section{Introduction} 
In this paper, we develop a well-posedness theory for stochastic parabolic Cauchy problems on $\R_{+}=(0,\infty)$ of the form
\begin{equation}\label{eq:SPDE}
\left\{
  \begin{array}{ll}
  \d U(t) - \div a(t,.) \nabla U(t) \, \d t & = \div F(t) \, \d t +  g(t)\, \d W_{\H}(t), \quad t\in \R_{+}\\
    U(0) & = \psi,
  \end{array}
\right.
\end{equation}
driven by a cylindrical Brownian motion $W_{\H}$ on a  complex separable  Hilbert space $\H$, with   $a \in \L^{\infty}(\Omega \times\R_{+}; \L^{\infty}(\R^{n}; \mathcal{L}(\R^{n})))$ (non necessarily symmetric)  and satisfying the uniform ellipticity condition:  for some $C>0$, 
\begin{equation*}
 a (\omega,t,x) \xi \cdot\xi  \geq C |\xi|^{2} \quad \forall \xi \in \R^{n},
\end{equation*}
for almost every $ (\omega,t,x)  \in \Omega  \times \R_{+} \times \R^{n}$, $\Omega$ being a probability space on which we realise  the cylindrical Brownian motion.  We treat initial data $\psi \in \L^{p}$ for $p\in(1,\infty)$, and forcing terms $F,g$ in appropriate $\L^p$ spaces of square functions, and    $g$ adapted to the underlying filtration (see below). \\

For Problem \eqref{eq:SPDE}, this constitutes the first result, even in  the deterministic  case (\textit{i.e.,} $g=0$ and $a,F, \psi$ constant in $\omega$), that can treat $\L^{\infty}$ coefficients in space and time when the data is not in a Hilbert space. All currently available theories, including state of the art results such as \cite{pronk-veraar, pv, dong-kim, dong21, dong22, kim-kim, nvwG} require some regularity in the space variable $x$ (at least some smallness of mean oscillations). These linear theories provide the foundation for a wide range of non-linear results, see \textit{e.g.,} \cite{dmh, ow, dong-krylov, dong-phan2, agresti-veraar}. Here we introduce an alternative foundation, free of the technical limitations created by the regularity assumptions in the linear theory. \\

Our approach brings to the problem the full force of parabolic PDE theory, as well as the modern developments of harmonic analysis beyond Calder\'on-Zygmund theory. This is possible thanks to the introduction of a flexible notion of pathwise weak solution, the use of function spaces adapted to problems with rough coefficients, and the proof of key boundedness results for maximal regularity operators.

\subsection{Square function spaces} 
Harmonic analysis has underpinned stochastic maximal regularity theory since the ground-breaking  papers of Krylov \cite{k1,k2, krylov}. In these papers,  Krylov established the following stochastic maximal regularity for the heat equation
\begin{equation*}
\mathbb{E} \bigg\| (t,x) \mapsto \bigg( \int _{0} ^{t} \nabla e^{(t-s) \Delta}g(s,.)\, \d W_{\H}(s) \bigg)(x)\bigg\|^{p} _{\L^{p}(\R_{+}\times \R^{n};\mathbb{C}^{n})}  
 \lesssim \mathbb{E}\|g\|^{p} _{\L^{p}(\R_{+}\times \R^{n};\H)},
\end{equation*} 
for all $p \ge 2$ and all adapted $g \in\L^{p}(\R_{+}\times \R^{n};\H)$.
 To prove such results, one uses extrapolation methods, such as Rubio de Francia's weighted extrapolation (see the survey \cite{krylov-survey}), to pass from results in $\L^{2}(\R^{n}; \L^{2}(\R_{+}))$ to mixed norm $\L^{p}(\L^q)$ spaces over space-time. This approach combines well 
with perturbative PDE methods, but has  two major limitations: it only works for $p \geq 2$ (the above  estimate fails for $p<2$), and it seems to require some regularity of the coefficients in the space variable $x$ to allow perturbative methods to work. \\

 Given these limitations, one can try other set-ups. 
For instance, one can use square function spaces of the form  $\V^{p}=\L^{p}(\R^{n}; \L^{2}(\R_{+}))$ ($\V$  stands for vertical square function) with norm $$\|g\|_{\V^p}=\bigg\| x\mapsto\bigg( \int_{0}^\infty |g(t, x)|^2\, \d t\bigg)^{1/2}\bigg\|_{\L^{p}}$$ while $\IC^n$ or $\H$ indicate  the  target space of the function, and at the same time we drop the source space $\R^n$ in the notation $\L^p(\R^n)$. 
The use of the $\V^p$ norm is natural for this problem because the It\^o isomorphism for stochastic integrals 
(see \cite{nvw}) gives that for all adapted process $f \in \V^{p}(\H)$, 
$$
\mathbb{E} \bigg\|  x\mapsto \bigg(\int _{0} ^{\infty} f(s,.)\, \d W_{\H}(s) \bigg)(x)\bigg\|_{\L^{p}} ^{p}
\eqsim \mathbb{E} \|f\|_{\V^{p} (\H )} ^{p}. 
$$
In these spaces, van Neerven, Veraar, and Weis have proven in \cite{nvwG} that
\begin{equation}
\label{eq:smr-heat}
\mathbb{E} \bigg\| (t,x) \mapsto \bigg( \int _{0} ^{t} \nabla e^{(t-s) \Delta}g(s,.)\, \d W_{\H}(s) \bigg)(x)\bigg\|^{p} _{\V^{p}}  
 \lesssim \mathbb{E}\|g\|^{p} _{\V^{p}(\H)},
\end{equation} 
for all  adapted processes  $g\in \V^{p}(\H)$ and all $p \in (1,\infty)$.
This removes the $p \geq 2$ limitation, but equations with $L^{\infty}$ coefficients in space and time still appear to be out of reach.

 Tent spaces  are an   alternative to the vertical square function spaces $\V^{p}$.    For $p \in (0,\infty)$, the tent space $\T^{p}$ is defined as the space of measurable functions $g$ with controlled  conical square function 
\begin{equation}
\label{def:Tp2}\|g\|_{\T^{p}} := \bigg( \int_{\R^{n}} 
\bigg( \int_{0}^{\infty} \fint_{B(x,\sqrt{t})} 
 |g(t,y)|^{2} \,{\d y \, \d t}\bigg)^{\frac{p}{2}}\, \d x \bigg)^{\frac{1}{p}}<\infty,
\end{equation}
where $B(x,r)$ denotes the Euclidean ball in $\R^n$ centred at $x$ with radius 
$r$,  and $\fint$ denotes the average for the Lebesgue measure. These spaces were introduced in \cite{cms} and have played a key role in harmonic analysis ever since, especially in the theory of boundary value problems on rough domains (see \textit{e.g.,} \cite{hkmp,aa}). The difference between the $\T^{p}$ norm and the $\V^{p}$ norm is the presence of an extra averaging over the ball $B(x,r)$, with $r=\sqrt{t}$ at ``time'' $t$ in accordance with parabolic scaling. \\ 

As both scales do not seem  fundamentally different, we may wonder what are the advantages and limitations of using  tent space norms on data and/or solutions.    

First of all, the spaces coincide for $p=2$, together with $\L^2(\R_{+};  \L^2(\R^n))$,  by Fubini's theorem, hence they share the same theory when $p=2$.  

Second, as function spaces,  $\V^{p}$ and $\T^{p}$ are continuously included in one another in the following way:
$\T^{p} \subsetneq \V^{p}$ for $p<2$ and
$ \V^{p} \subsetneq \T^{p}$ for $p>2$, see \cite{ahm}.

Third, free evolutions of  (regular) parabolic problems share common features in both scales.   
For instance,  classical  Littlewood-Paley estimates (see \cite{littleStein}) give  that, for all $p \in (1,\infty)$ and $f \in \L^{p}$, 
\begin{align}
\label{eq:LPest}
\|f\|_{\L^{p}} &\eqsim \|(t,x)\mapsto \nabla e^{t \Delta}f(x)\|_{\V^{p}} 
 \eqsim \|(t,x)\mapsto \nabla e^{t \Delta}f(x)\|_{\T^{p}}.
\end{align}

In fact, a first evidence that  $\T^p$ norms are better suited to the analysis  of equations whose coefficients lack regularity  is in  \cite{amp}. Indeed,  for deterministic real  parabolic equations $ \partial_{t}u-\div ( A  \nabla u)=0$ {in} $ \reu$,  $
 u(0,\cdot)= \psi \in \L^p$,     with $\L^{\infty}$ coefficients in space and time, a  weak solution is given by the propagator ({\it i.e.},\,  $u=\Gamma(t,0)\psi$) and it is proved that  
 \begin{align}
\label{eq:LPestA}
\|\psi\|_{\L^{p}} \eqsim \|(t,x)\mapsto (\nabla \Gamma(t,0) \psi)(x)\|_{\T^{p}} ,
\end{align}  
while no such bounds exist for the vertical square function norm when $p\ne 2$
 (and a uniqueness statement also holds, see Remark \ref{rem:CP}). 
This fact already implies that usage of tent spaces opens the door to having $\L^p$ initial data in Problem \eqref{eq:SPDE} (although we shall not use this estimate in the stochastic case). \\

Besides an appropriate notion of solution, what is missing as of now to understand Problem \eqref{eq:SPDE}   are  tent space estimates for the maximal regularity operators needed to analyze the source term and the noise term in  the  absence of regularity on the coefficients. We next describe  the operators that we consider.

\subsection{Maximal regularity operators}
To produce a solution  to Problem \eqref{eq:SPDE} in tent spaces,  we follow the approach used in \cite{pv}, 
decomposing the stochastic parabolic problem with random coefficients dependent on space and time into two parts: one stochastic heat equation, and a family of deterministic problems with time dependent coefficients in which  source terms in divergence form appear naturally and must be included in the study. This circumvents technical challenges coming from the fact that random propagators are typically not adapted (see \cite{pronk-veraar})  to the underlying filtration
and amounts to proving maximal regularity estimates for  two deterministic operators.\\

For the stochastic heat equation, this reduces to extrapolating the boundedness of a quadratic maximal regularity operator initially defined by
\begin{equation}
\label{eq:quadmaxheat}
\begin{array}{cccc}\cQ: & \L^2(\R_{+};  \L^2(\R^n)) & \rightarrow & \L^2(\R_{+};  \L^2(\R^n) ) \\ & f & \mapsto & \left[(t,x) \mapsto \big(\int _{0} ^{t} |\nabla e^{(t-s) \Delta}f(s,.)(x)|_{\IC^n}^{2} \, \d s\big)^{\frac{1}{2}}\right]\end{array}
\end{equation}

For  the  deterministic perturbations with $\L^{\infty}$ uniformly elliptic coefficients   $A$ in both the space and time variables, this  reduces to extrapolating boundedness of the  following  operator, initially defined and proved to be bounded by Lions in \cite{Lions}: 
\begin{equation}
\label{eq:Lionsop}
\left.\begin{array}{cccc}\cL: & \L^2(\R_{+};  \L^2(\R^n; \IC^n)) & \rightarrow & \L^2(\R_{+};  \L^2(\R^n; \IC^n)) \\ & F & \mapsto & \nabla u\end{array}\right.
\end{equation}
where $u$ is the  (energy)  solution (see below) of
\begin{equation*}
\begin{cases}
 \partial_{t}u-\div ( A  \nabla u)=\div F & \textrm{in}\ \reu 
 \\
 u(0,\cdot)= 0    & \textrm{in}\ \R^n,
\end{cases}
\end{equation*}

It should be noted that stochastic maximal regularity results give information about the gradient of the solution. This is different from standard deterministic maximal regularity, which gives information about second order spatial derivatives of the solution. Such information cannot be obtained for stochastic problems, because the irregularity of the Brownian motion prevents a gain of two order of smoothness. With stochastic applications in mind, it is thus natural to estimate the solutions of related deterministic problems using the Lions operator rather than   maximal regularity operators such as $f \mapsto [(t,x) \mapsto \int _{0} ^{t} \Delta e^{(t-s) \Delta}f(s,.)(x)\, \d s]$. This is good news as  such  maximal $\L^2$  regularity is unknown for our time dependent parabolic problems with $\L^\infty$ coefficients (and fails  in general abstract contexts), see the survey article \cite{survey}. \\

We  are able to extrapolate $\T^{p}$ boundedness from $p=2$ to $1-\eps<p\le\infty$ for the operator $\cL$, see Section \ref{sec:main}. In the context of $\L^{\infty}$ coefficients in space and time, we  thus  obtain the first boundedness result for the Lions operator outside of Hilbert spaces.  To do so, we introduce new techniques to overcome various harmonic analysis challenges, especially when $p<2$ as the other case $p>2$ is comparatively easy.  \\

We need to handle the fact that appropriate heat kernel bounds are available for the propagator $\Gamma(t,s)$, thanks to the fundamental papers of Nash \cite{nash} and Aronson \cite{aronson}, but not for its gradient. This prevents us from applying the tent space extrapolation theory for singular integral operators from \cite{akmp}. We circumvent this problem by using PDE and harmonic analysis techniques  such as (boundary) Caccioppoli's inequality and Whitney decompositions to exploit local   estimates of the form 
$$
\int_{0} ^{t} \int _{2^{j}B \backslash 2^{j-1}B} |\nabla  \Gamma(t,s)^{*} \varphi(y)|^{2}\,  \d y\, \d s 
\lesssim e^{-c4^{j}r^{2}/t}\int_{B} |\varphi(y)|^{2}\, \d y$$
for $\varphi$ supported in a ball $B$ of radius $r$,  $t<r^{2}$ and $j\ge 2$, see \eqref{eq:phiB2} below.  Such estimates can be interpreted as heat kernel bounds integrated in space and time. By contrast, the off-diagonal bounds normally used in tent space extrapolation theory (see \cite{akmp}) are heat kernel bounds integrated in space, but not in time.\\

However,  we have to face the fact that the decay in these heat kernel bounds is not sufficient to sum up the various local  pieces in our tent space analysis.  Instead of using weighted tent spaces as in \cite{anp}, we exploit here  the H\"older regularity estimates for weak solutions of parabolic real equations as established by Nash in \cite{nash}. Indeed,  $\L^{1}-\C^{\alpha}$ mapping properties of the propagators  provide more decay than merely $\L^{1}-\L^{\infty}$ bounds.  \\

As for  the quadratic maximal regularity operator $\cQ$, we extrapolate the  $\T^{p}$ boundedness from $p=2$ to  $1<p\le \infty$ in Section \ref{sec:quadratic}.     Again, the case $p>2$ is the easiest. Extrapolation  for $p<2$, however, requires another argument than for the Lions operator as we do not know how to reach $p=1$, and hence cannot use atomic decompositions. We use a Calder\'on-Zygmund decomposition in tent spaces  introduced by Huang in his PhD thesis, see \cite{yi}, and prove weak type $\T^p$ estimates. Such a  decomposition  is natural, but difficult to use, because of the lack of cancellation of local terms in this decomposition.  We manage to do so here by importing to the tent space setting ideas used in the $\L^p$ context by Blunck-Kunstmann, see \cite{bk} and also    \cite{a-mem}. We gain just enough decay by doing a further decomposition to force cancellation of local terms in such a way that we can exploit  results by Ne\v{c}as \cite{necas} or Bogovski\u{i} \cite{bog1,bog2} on inverting the divergence operator.\\

\subsection{Which solutions? }
Once  the harmonic analysis results matching the desired strategy have been established,  we need to look for a convenient notion of solutions for Problem  \eqref{eq:SPDE} that is compatible with tent spaces.\\  

There has been previous attempts at developing a tent space theory of stochastic maximal regularity. In \cite{anp}, with van Neerven, we considered time independent problems with $\L^{\infty}$ coefficients in the space variable. Besides the limitation to constant in time coefficients, and to the notion of solution used there,  another weakness of \cite{anp} is also the fact that it requires the tent space norm to be weighted by a factor $t^{-\beta}$ for some $\beta>0$. This, in turn, restricts the initial data that can be considered (to a Sobolev space rather than all of $\L^{p}$). 
The case of time dependent coefficients was then considered by the second author and Veraar in \cite{pv}. This included some estimates between weighted tent spaces for problems with $\L^{\infty}$ coefficients in space and time, but no maximal regularity estimates in that context.
\\

Starting from a different view-point, the weak formulation of a parabolic equation in the deterministic case has been very fruitful. It involves testing against functions over space-time. 
In the stochastic case, however, one cannot conveniently  interpret $g(s,.)\, \d W_{\H}(s)$ in terms of a duality pairing with a test function in space-time. This prevents us from using a weak formulation for  Problem \eqref{eq:SPDE}.  A traditional response is to test against functions of the space variable to express the equation as an ordinary  stochastic   differential equation. This requires a priori knowledge that the solution, at time $t$, belongs to some Banach space as a function of $x$. 
Consequently, one cannot use  the full force of the deterministic theory of parabolic problems. One can also consider fixed point formulations, as in \cite{anp}, but the corresponding solutions may not be stochastic processes, and their uniqueness theory is,  to some extent,  trivial.  To reach the level of flexibility of deterministic theories, 
 the next best thing is to use a formulation after integration by parts in time.  
Doing this  
at the level of the mild formulation  leads to the notion of pathwise mild solution  introduced  in \cite{pronk-veraar} for linear problems (see, in particular, \cite[Equation (4.6)]{pronk-veraar}), and in \cite{dhn} for quasilinear problems.   \\

In   the notion of {\bf pathwise weak solution} that we introduce,  we first test  against functions which depend on time and space variables  in the duality in space at a fixed time, and then integrate  by parts,  which is less demanding. In particular,  this can be done  even when the propagator associated with the equation $\partial_{t} u = \div a(t,.) \nabla u$ only defines an evolution family on $\L^2$.  Moreover, this allows us to genuinely consider the uniqueness problem.  \\

More precisely, for a large class of data $(\psi,F,g)$, a pathwise weak solution of Problem \eqref{eq:SPDE} is a random function
$U\in\L^1(\Omega; \L^1_{\loc}([0,+\infty); \W^{1,1}_{\loc}(\R^n)))$ such that, 
for all (non random) test functions  $\varphi\in \C^{\infty}_{c}([0,\infty)\times \R^{n})$, 
the random functions
$
 V_{\varphi} \colon t\mapsto \langle U(t,.),\varphi(t,.)\rangle - \big\langle \int  _{0} ^{t} g(s,.)\, \d W_{\H}(s), \varphi(t,.) \big\rangle
$
and
\begin{align*}
& \mathbb{V}_{\varphi} \colon t\mapsto \langle \psi, \varphi(0,.) \rangle 
-
\int  _{0} ^{t}  \langle a(s,.)\nabla U(s,.),\nabla \varphi(s,.) \rangle \, \d s +  \int  _{0} ^{t} \langle
U(s,.),\partial_{s}\varphi(s,.) \rangle\,  \d s \\ &
\qquad  \quad -  
\int  _{0} ^{t} \langle
F(s,.),\nabla \varphi(s,.) \rangle\,  \d s  
- \int  _{0} ^{t} \bigg\langle \int  _{0} ^{s} g(\tau,.)\, \d W_{\H}(\tau), \partial_{s}\varphi(s,.) \bigg\rangle\,  \d s,
\end{align*} 
 agree in $\L^{1}(\Omega\times [0,T])$ for all $T>0$. 
 See Definition \ref{def:pathweak} and Lemma \ref{lem:pathweak}  for the definition of all the terms in the above expressions. \\

The notion of pathwise weak solution is  inspired by the notion of pathwise mild solution, and is  also related to the notion of kinetic solution
first introduced for hyperbolic conservation laws in \cite{lpt}, and then used for parabolic SPDE in \cite{hof,dhv}. The two notions share the idea of using test functions over space-time, and to first interpret solutions as random functions rather than stochastic processes (see \cite[Remark 2.3]{hof} and  Lemma \ref{lem:pathweak}). One can expect  a notion of solution combining the advantages of pathwise weak and kinetic solutions to be used 
 in future treatments of quasilinear problems with rough coefficients.
  
\subsection{Well-posedness} Having the notion of pathwise weak solution allows us to prove our well-posedness result  Theorem \ref{thm:spde}  in Section \ref{sec:stoch}.  

For $1<p<\infty$, and data 
$(\psi, F,g)$ with $g$ adapted, such that $$\mathbb{E}\|\psi\|_{\L^{p}} ^{p} +
\mathbb{E}\|F\|_{\T^{p}(\IC^n)} ^{p} + \mathbb{E}\|g\|_{\T^{p}(\H)} ^{p}<\infty$$ we prove existence and uniqueness of pathwise weak solutions in the space $\L^{p}(\Omega; \dot{\mathcal{T}}^{p}_{1})$, where $$
\dot{\mathcal{T}}^{p}_{1}:= \{u \in  \L^1_{\loc}([0,\infty)\times \R^{n})  \;;\; \nabla u \in \T^{p}(\IC^{n})\}.$$

Using the results from Section \ref{sec:main} and Section \ref{sec:quadratic} together with the  Littlewood-Paley estimate \eqref{eq:LPest},  we show that our pathwise weak solutions satisfy the stochastic maximal regularity estimate: 
\begin{equation}
\label{eq:maxrefTp}
\mathbb{E}\|\nabla U\|_{\T^{p}(\mathbb{C}^{n})} ^{p} \lesssim \mathbb{E}\|\psi\|_{\L^{p}} ^{p} +
\mathbb{E}\|F\|_{\T^{p}(\IC^n)} ^{p} + \mathbb{E}\|g\|_{\T^{p}(\H)} ^{p}.
\end{equation}
We also obtain  $\T^{p}$  estimates for $U$ itself on any finite time interval $[0,T]$. 

As $\T^{2}=\L^{2}(\R_{+}; \L^2(\R^n))$ isometrically, \eqref{eq:maxrefTp}  corresponds, for $p = 2$, to  classical stochastic maximal regularity  estimate in  the energy space $\L^2(\Omega; \L^{2}(\R_{+}; \Hdot^1(\R^n)))$.   

Uniqueness of pathwise weak solutions in $\L^{p}(\Omega; \dot{\mathcal{T}}^{p}_{1})$  is a key feature of our result, as the uniqueness problem is far from trivial in this context (as opposed to the context of mild solutions).   We exploit for this the uniqueness result 
 for  deterministic equations in our earlier work with Monniaux \cite{amp} when $p=2$, followed by its extension to general $p$ by Zato\'n  \cite{zaton}.  \\

Let us finish by commenting on the regularity of our solutions.  
In contrast to mild or even kinetic solutions, our solutions are typically not (almost surely) contained in a space of continuous functions valued in a Banach space, and even  $t \mapsto \langle U(t,.),\varphi(t,.) \rangle$  may not be almost surely continuous when $p \neq 2$ (if $p=2$,  it is the case and follows from the construction of the It\^o integral in $\L^2$). This is another reason while Problem \eqref{eq:SPDE} does not have a standard weak formulation. 
The singularity is, in fact, carried by the noise term
 appearing in  $V_{\varphi}$  as it  is merely defined by density of simple adapted processes in $  \L^{p}(\Omega;\T^{p}(\H))$,  see Lemma \ref{lem:Smap}. All terms  appearing in $\mathbb{V}_{\varphi}$,  on the other hand, are almost surely absolutely continuous, even the one involving an integrated version of the noise. 
  This is reminiscent of rough path theory, where one considers a noise term in $\C^{\alpha}$, and an enhancement of the noise in $\C^{2\alpha}$ (typically through an iterated integral; see \cite{fh}). For this reason, we suggest to call  $$(t,x) \mapsto U(t,x)- \bigg(\int  _{0} ^{t} g(s,.)\, \d W_{\H}(s)\bigg)(x)$$
the {\em enhanced part of the solution} as it carries more regularity. 
 This  enhanced part tested against any $\varphi$ has an almost surely absolutely continuous trajectory, while  it  may not be the case for the solution itself. \\

\subsection{Summary} 
The global road map to have in mind  has three components:  
\begin{itemize}
\item estimates for deterministic problems in appropriate square function spaces such as 
tent spaces  (see Theorems \ref{thm:main1}, \ref{thm:main2}, \ref{thm:quad1}, \ref{thm:quad2}), 
  \item  the  introduction and use of the notion of pathwise weak solution to obtain well-posedness  (see Definition \ref{def:pathweak}), 
\item a stochastic integration theory that  implements such estimates for adapted processes $g$  (see Lemma \ref{lem:Smap}), 

\end{itemize}

We thus hope that it can be used as a foundation for a stochastic maximal regularity theory adapted to much more general (and potentially singular) noise terms.  This  would only require appropriate generalisations of the last point,  
in the spirit of rough path theory and its modern developments (as in, for instance, \cite{h, gh, bm}).  \\

{\em Notation:} To simplify notation, we typically only indicate the target space of our function spaces, writing $\L^{p}$ instead of $\L^{p}(\R^{n})$, and $\T^{p}(\H),\V^{p}(\H)$ for square function spaces over $[0,\infty) \times \R^{n}$ with  $\H$-valued functions. We indicate the space of variables (such as in $\L^2(\Omega; \L^{2}(\R_{+}; \Hdot^1(\R^n)))$ for instance) when it is needed for clarity. We use the Vinogradov notation $\lesssim , \eqsim$ to hide irrelevant constants.

\section{Review on Cauchy problems and the Lions operator.}
\label{sec:ing}

Let us first review some basic facts and introduce our notation. Throughout, we assume that $A=(a_{ij})$ is an 
$n\times n$-matrix of  bounded  measurable real or complex-valued functions  on $\ree$. We let 
 $\Lambda=\|A\|_{\infty}$. For the ellipticity, we assume a G\aa rding inequality: for some $\lambda>0$   
\begin{equation}
\label{eq:ell}
\Re \int_{\R^n}{A(t,x)\nabla w(x)}\cdot {\nabla w(x) }\, \d x \ge \lambda  \|\nabla w\|_{\L^{2}}^2.
\end{equation}
for every $w\in \Hdot^1(\R^n)$ (the closure of $\C_{c}^\infty(\R^n)$ for the semi-norm $\|\nabla w\|_{\L^{2}})$ and  almost every $t>0$. Here, ${z}\cdot{z'}$ denotes the complex inner product in $\IC^n$. 
The meaning of  $ \partial_{t}u-\div (A \nabla u)=\div F$ on an open bounded cylinder $(a,b)\times  \mathcal{O} $  is in the sense of weak solutions: $u$ belongs to $\L^2(a,b; \H^1( \mathcal{O} ))$ and the equation is verified tested against (complex-valued) functions in $\varphi\in \C_{c}^\infty((a,b)\times \mathcal{O} )$ in the sense
that
\begin{equation}
\label{eq:weakform}
-\int u \ \partial_{t}\overline\varphi  + \int   {A\nabla u}\cdot{ \nabla \varphi} = -\int  {F}\cdot{  \nabla \varphi}.
\end{equation}
Here $\H^1( \mathcal{O} )$ is the usual Sobolev space. Whenever $F\in \L^2(a,b; \L^2( \mathcal{O} ;\IC^n))$, it is well-known that $u\in \C([a,b], \L^2( \mathcal{O}'))$ for any open set $ \mathcal{O}'$ with $\overline{ \mathcal{O}'} \subset  \mathcal{O}$  \cite[Chap.~I, Prop.~3.1, and Chap.~II, Thm.~3.1]{Lions}.  They also satisfy local energy estimates called Caccioppoli inequality. We only need the ones when $F=0$   (see \textit{e.g.,} \cite[Prop.~3.6]{amp}),  which can be expressed as follows: for fixed $\alpha,\beta>1$ and whenever $b-\alpha r^2\ge a$ and $\overline{B(x,\beta r)}\subset  \mathcal{O} $, for some constant depending only on $\lambda,\Lambda, \alpha,\beta$,  
\begin{equation}
\label{eq:caccio}
\int_{b-r^2}^b\int_{B(x,r)} |\nabla u|^2 \lesssim r^{-2} \int_{b-\alpha r^2}^b\int_{B(x,\beta r)} | u|^2.
\end{equation}

We say that a   weak solution is global (in $\R_{+}\times \R^n$)  if it belongs to $\L^2_{\loc}(\R_{+}; \H^1_{\loc}(\R^n))$ and \eqref{eq:ell}  holds for all $\varphi\in \C_{c}^\infty(\R_{+}\times \R^n)$. We state the following lemma for later use in Section \ref{sec:stoch}.

\begin{lem} 
\label{lem:countable} There exists a countable subset $D$ of $\C_{c}^\infty(\R_{+}\times \R^n)$ such that, for any $u\in \L^2_{\loc}(\R_{+}; \H^1_{\loc}(\R^n))$ and $F \in \L^2_{\loc}(\R_{+}; \L^2_{\loc}(\R^n;\IC^n))$, if \eqref{eq:weakform} holds for all $\varphi\in D$,  then it  holds for all $\varphi\in \C_{c}^\infty(\R_{+}\times \R^n)$.
\end{lem}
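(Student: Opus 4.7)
The plan is to build $D$ from a countable dense subset of test functions with respect to a topology strong enough that each of the three terms in \eqref{eq:weakform} is continuous in $\varphi$, uniformly in the fixed data $u, F$ (provided the support of $\varphi$ is constrained to a compact set).

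First I would fix an exhaustion $(K_{j})_{j\ge 1}$ of $\R_{+}\times \R^{n}$ by compact sets with $K_{j}\subset \operatorname{int}(K_{j+1})$. For each $j$, the space $\C_{c}^{\infty}(K_{j})$ of smooth functions supported in $K_{j}$, endowed with the Fr\'echet topology of uniform convergence of all partial derivatives, is a separable Fr\'echet space; pick a countable dense subset $D_{j}$. Set $D=\bigcup_{j} D_{j}$. Clearly $D$ is countable and contained in $\C_{c}^{\infty}(\R_{+}\times \R^{n})$.

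Next I would check the continuity of each of the three forms in \eqref{eq:weakform}. Fix $u \in \L^{2}_{\loc}(\R_{+}; \H^{1}_{\loc}(\R^{n}))$ and $F \in \L^{2}_{\loc}$, and a compact set $K_{j}$. For any $\varphi \in \C_{c}^{\infty}(K_{j})$, one has the bounds
\begin{align*}
\bigg|\int u\, \partial_{t}\overline{\varphi}\bigg| &\lesssim \|u\|_{\L^{1}(K_{j})}\,\|\partial_{t}\varphi\|_{\infty}, \\
\bigg|\int A\nabla u\cdot \nabla \varphi\bigg| &\lesssim \Lambda\,\|\nabla u\|_{\L^{1}(K_{j})}\,\|\nabla \varphi\|_{\infty}, \\
\bigg|\int F\cdot \nabla \varphi\bigg| &\lesssim \|F\|_{\L^{1}(K_{j})}\,\|\nabla \varphi\|_{\infty},
\end{align*}
all of which are finite and thus show that, as functionals of $\varphi$, each side of \eqref{eq:weakform} is continuous on $\C_{c}^{\infty}(K_{j})$ in its Fr\'echet topology.

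Finally, suppose \eqref{eq:weakform} holds for every $\varphi\in D$ and let $\varphi\in \C_{c}^{\infty}(\R_{+}\times \R^{n})$ be arbitrary. Its support is contained in $K_{j}$ for some $j$ large enough, and by density of $D_{j}$ in $\C_{c}^{\infty}(K_{j})$ we may pick a sequence $(\varphi_{k})\subset D_{j}\subset D$ converging to $\varphi$ in the Fr\'echet topology. The continuity established above lets us pass to the limit termwise in \eqref{eq:weakform} for each $\varphi_{k}$, yielding the identity for $\varphi$. There is no real obstacle here: the only mildly delicate point is making sure one selects a topology that simultaneously controls $\varphi$, $\partial_{t}\varphi$, and $\nabla \varphi$ on a fixed compact set, which the Fr\'echet topology on $\C_{c}^{\infty}(K_{j})$ does automatically.
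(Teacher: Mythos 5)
Your proof is correct, but it takes a genuinely different route from the paper's. The paper argues constructively: it rewrites \eqref{eq:weakform} as $\int G\cdot\nabla_{t,x}\varphi=0$ with $G=(-u,A\nabla u+F)\in\L^2_{\loc}$, takes a smooth partition of unity $1=\sum_k h_k^2$ subordinate to unit cubes, expands each $\varphi h_k$ in a Fourier series on its cube (fast decay of the coefficients justifying the interchange of sum and integral), inserts a time cutoff $\chi_\ell$, and so exhibits an explicit countable set $D=\{h_k\chi_\ell e^{i2\pi m\cdot(t,x)}\}$. You instead invoke the separability of the Fr\'echet space $\C_c^\infty(K_j)$ for an exhaustion $(K_j)$ and the continuity of the three bilinear terms as linear functionals of $\varphi$ on that space; this is the standard soft argument and all the steps check out (the $\L^1(K_j)$ bounds on $u$, $\nabla u$, $F$ follow from the $\L^2_{\loc}$ hypotheses, and compact subsets of $\R_+\times\R^n$ stay away from $t=0$, so there are no boundary issues). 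What each approach buys: yours is shorter, applies verbatim to any locally finite-order distributional identity without using the structure of the equation, but rests on the (standard, though usually uncited) fact that $\mathcal{D}_K$ is a separable Fr\'echet space and that a countable dense subset can be extracted; the paper's version trades that abstraction for a completely explicit $D$, which also makes transparent that the same $D$ works simultaneously for every admissible pair $(u,F)$ --- a uniformity your construction also delivers, since $D$ is chosen before $u$ and $F$, but which is worth stating explicitly given how the lemma is used in Step 1 of the proof of Theorem \ref{thm:spde}.
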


\begin{proof} We can  write \eqref{eq:weakform} as $\int G\cdot \nabla_{t,x}\varphi=0$ with $$G=(-u, A\nabla u +F) \in \L^2_{\loc}(\R_{+}; \L^2_{\loc}(\R^n;\IC^{1+n})).$$  It is then an exercise to construct $D$. Take a smooth partition of unity $1=\sum_{k\ge 0} h^2_{k}$ where the $h_{k}$ have support in unit cubes of $\R^{1+n}$ with bounded overlap. Then, given an arbitrary $\varphi\in \C_{c}^\infty(\R_{+}\times \R^n)$, write 
$\varphi=\sum (\varphi h_{k})h_{k}$ and decompose $\varphi h_{k}$ in a Fourier series. Finally multiply the result by $\chi_{\ell}(t)$ for an appropriate integer $\ell$, where $\chi_{\ell}$ is a smooth function that is supported in $[0,\infty)$ and which is 1 on $[2^{-\ell},\infty)$, so that $\chi_{\ell}(t)=1$ on the support of $\varphi$. Plug this decomposition of $\varphi$ into \eqref{eq:weakform}. 
 Since $\varphi$ has compact support, the sum $\varphi=\sum (\varphi h_{k})h_{k}$ only involves finitely many terms.  Besides, smoothness of $\varphi h_{k}$ implies fast convergence of its  Fourier coefficients, so that one can pass the Fourier summation outside the integral. The collection $D$  can thus be defined as the set of all functions of the form $(t,x)\mapsto h_{k}(t,x)\chi_{\ell}(t)e^{i2\pi m\cdot(t,x)} $, for $k\ge0$, $\ell\ge 0$, $m\in \mathbb{Z}^{1+n}$.\end{proof}

The construction of the Lions operator, together with the estimates that come with it, follows from  the uniqueness result proved for local in time energy solutions. 
 Moreover, uniqueness holds in the largest possible homogeneous energy space  $\L^2(\R_{+};\Hdot^1(\R^n))$ and is a consequence of working on an infinite  time interval  (otherwise, this is false). We give further details below. This is an important point for the results we prove in this work.

\begin{thm}
\label{thm:CP} Given a source term $F\in \L^2(\R_{+};  \L^2(\R^n; \IC^n))$, and initial data $\psi\in \L^{2}$,  
the Cauchy problem \begin{equation}
\label{eq:CP}
\begin{cases}
 \partial_{t}u-\div (A \nabla u)=\div F & \textrm{in}\ \reu 
 \\
 u(0,\cdot)= \psi   & \textrm{in}\ \R^n
\end{cases}
\end{equation}
 is well-posed for global weak solutions in the class $\L^2(\R_{+};\Hdot^1(\R^n))$. 
 
 The solution  additionally belongs to 
$\C_{0}([0,\infty); \L^{2}(\R^{n}))$\footnote{The subscript 0 means null limit at $\infty$.},  and satisfies the following estimates. 
 When $F=0$, we have that: 
\begin{equation}
\label{eq:enestpsi}
\sup_{t>0}\|u(t)\|_{\L^{2}}^2 + 2\lambda \|\nabla u\|_{\L^2(\R_{+};\L^2(\R^n;\IC^n))}^2 \le   \|\psi\|_{\L^{2}}^2.
\end{equation}
 When $\psi=0$, we have that: 
\begin{align}
\label{eq:enestF}
\sup_{t>0}\|u(t)\|_{\L^{2}}^2 &\le \frac{1}{2\lambda}\|F\|_{\L^2(\R_{+};  \L^2(\R^n; \IC^n))} ^{2}, \\ 
  \|\nabla u\|_{\L^2(\R_{+};\L^2(\R^n;\IC^n))} &\le   \frac{1}{\lambda}\|F\|_{\L^2(\R_{+};  \L^2(\R^n; \IC^n))},
\end{align}
 The Lions operator given by \eqref{eq:Lionsop} is  thus  bounded on $\L^2(\R_{+};  \L^2(\R^n; \IC^n))$ with norm bounded by $\frac{1}{\lambda}$. 
\end{thm}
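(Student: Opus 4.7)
My plan is to follow J.~L. Lions' classical variational method (as in the cited \cite{Lions}) on bounded intervals, and then pass to the global-in-time setting using the uniform energy estimates. On a finite interval $(0,T)$, existence of a weak solution $u_{T}\in \L^{2}(0,T;\Hdot^1(\R^n))$ is obtained either by Galerkin approximation or directly via Lions' variational theorem for parabolic equations. Since the equation places $\partial_t u_T\in \L^{2}(0,T;\Hdot^{-1}(\R^n))$, the Lions--Magenes trace theorem (applied locally in space via cutoffs and then upgraded to a global $\L^{\infty}_t\L^{2}_x$ statement by the a~priori bound below) yields $u_T\in \C([0,T];\L^2)$ with $u_T(0)=\psi$ and validates the energy identity
\[ \|u(t)\|_{\L^2}^{2} + 2\Re \int_{0}^{t}\!\int_{\R^n} A\nabla u\cdot \overline{\nabla u}\,\d x\,\d s \;=\; \|\psi\|_{\L^2}^{2} - 2\Re \int_{0}^{t}\!\int_{\R^n} F\cdot \overline{\nabla u}\,\d x\,\d s, \]
obtained by pairing with $u_T$ in the $\Hdot^1/\Hdot^{-1}$ duality. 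The G\aa rding inequality \eqref{eq:ell} on the left-hand side gives \eqref{eq:enestpsi} when $F=0$; when $\psi=0$, the sup bound in \eqref{eq:enestF} comes from Young with parameter $\varepsilon=1/(2\lambda)$ applied to $2|\Re\int F\cdot\overline{\nabla u}|\le \varepsilon\|F\|^{2}+\varepsilon^{-1}\|\nabla u\|^{2}$, while the gradient bound follows from plain Cauchy--Schwarz on $\int F\cdot\overline{\nabla u}$ after letting $t\to\infty$. All these bounds are uniform in $T$, so by compatibility of the $u_T$ (provided by uniqueness, below) one patches them into a global solution in $\L^{2}(\R_{+};\Hdot^{1})\cap \C_b([0,\infty);\L^2)$.

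For uniqueness in $\L^2(\R_+;\Hdot^1)$, the difference $w=u_1-u_2$ of two such solutions lies in $\L^2(\R_+;\Hdot^1)$ and solves the homogeneous equation. On each $(0,T)$ the Lions--Magenes argument provides $w\in \C([0,T];\L^2)$ with $w(0)=0$, and the energy identity with $F=0$, $\psi=0$ forces $\|w(t)\|_{\L^2}^{2}\le 0$, hence $w\equiv 0$. The role of the infinite horizon is that $\L^{2}(\R_{+};\Hdot^{1})$ is the natural space in which the global dissipation is finite and in which the trace at $0$ is canonically identified via the equation; on a truncated interval without a constraint at the right endpoint, this identification is not automatic and uniqueness in the bare $\L^{2}\Hdot^{1}$ class can genuinely fail.

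For the null limit at infinity, $\nabla u\in \L^{2}(\R_{+};\L^{2})$ together with the energy identity (whose right-hand side is absolutely continuous in $t$) gives that $\|u(t)\|_{\L^2}^{2}$ converges to a finite limit $L\ge 0$ as $t\to\infty$. To show $L=0$, I would first treat $\psi\in \L^1\cap \L^2$ (and analogously $F$, compactly supported in time with values in $\L^{1}\cap \L^{2}$): Aronson's Gaussian upper bound on the propagator $\Gamma(t,s)$, used later in the paper, yields $\|u(t)\|_{\L^{2}_x}\lesssim t^{-n/4}\|\psi\|_{\L^{1}}\to 0$. Density of $\L^{1}\cap\L^{2}$ in $\L^{2}$ combined with the contraction $\|u-u_k\|_{\L^\infty_t\L^{2}_x}\le \|\psi-\psi_k\|_{\L^{2}}$ from \eqref{eq:enestpsi} extends the decay to all $\L^{2}$ data, completing the membership $u\in\C_{0}([0,\infty);\L^{2})$. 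The $\L^{2}$-boundedness of the Lions operator $\cL$ with norm $\le 1/\lambda$ is then a direct rephrasing of the gradient bound in \eqref{eq:enestF}. The main obstacle is this null limit: the bare variational energy method only delivers square-integrability of $\|\nabla u\|_{\L^{2}}$ on $\R_+$ and not pointwise decay of $\|u(t)\|_{\L^2}$, and since $A$ is time-dependent one cannot diagonalize via a semigroup; one has to import off-diagonal/ultracontractivity information on $\Gamma(t,s)$ to close the gap.
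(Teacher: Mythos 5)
Your overall scheme (Lions variational existence on bounded intervals, glued into a global solution, then energy identities for estimates and uniqueness) matches the paper's first proof in spirit, but the two crucial points where the infinite-horizon setting bites are handled differently, and your handling has gaps.

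First, for the null limit at infinity you invoke Aronson's Gaussian upper bounds on $\Gamma(t,s)$. These are available only when $A$ has real-valued entries, whereas Theorem~\ref{thm:CP} is stated for bounded measurable $A$ with complex entries satisfying the G\aa rding inequality \eqref{eq:ell}; Gaussian bounds are used elsewhere in the paper (in Theorem~\ref{thm:main2}, explicitly restricted to real coefficients), but they cannot be used here. The paper instead obtains $u\in\C_0([0,\infty);\L^2(\R^n))$ from a purely functional-analytic homogeneous trace embedding of \cite[Lem.~3.1]{amp}: $\L^2(\R_{+};\Hdot^1(\R^n))\cap\Hdot^1(\R_{+};\Hdot^{-1}(\R^n))\hookrightarrow\C_0([0,\infty);\L^2(\R^n))+\IC$, which involves no kernel bound at all and so applies to complex coefficients.

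Second, your uniqueness step is circular in the homogeneous setting. Given a competing solution known only to lie in $\L^2(\R_+;\Hdot^1(\R^n))$, there is no reason for it to lie in the inhomogeneous space $\L^2(0,T;\H^1(\R^n))$, and the classical Lions--Magenes embedding $\L^2(0,T;\H^1)\cap\H^1(0,T;\H^{-1})\hookrightarrow\C([0,T];\L^2)$ therefore does not apply. You correctly sense that this is where the infinite horizon is essential, but the assertion that "the trace at $0$ is canonically identified via the equation" is exactly the nontrivial point that must be proved; the paper does this, again, via the homogeneous embedding of \cite[Lem.~3.1]{amp} (plus \cite[Thm.~3.11]{amp}), from which one first concludes that any competing solution lies in $\C_0([0,\infty);\L^2)+\IC$ before running the energy/integral-identity argument. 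In short: one lemma, the homogeneous trace embedding, simultaneously fixes both of your gaps, and it is the key device that replaces what you tried to do with Gaussian bounds and with the inhomogeneous Lions--Magenes theorem.
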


\begin{proof} There are several ways to see this. A first proof can be to observe that existence and estimates  are well-known on any finite time interval $[0,T]$ instead of $\R_{+}$ and go back to Lions, see \cite[Chap.~II, Thm.~3.1]{Lions}.  There,  thanks to the emdedding theorem of Lions that $\L^2(0,T; \H^1(\R^n)) \cap \H^1(0,T; \H^{-1}(\R^n)) \hookrightarrow \C([0,T];\L^2(\R^n))$, see \cite[Chap.~I, Prop.~3.1]{Lions},  uniqueness through integral identities  is established in the class $\L^2(0,T; \H^1(\R^n))$. As one can do this for all $T$, we obtain a global weak  solution with the above bounds.  In particular, the global weak solution belongs to $\L^2(\R_{+};\Hdot^1(\R^n))$. As it does not belong  to $\L^2(\R_{+};\L^2(\R^n))$, the Lions embedding  result  does not apply anymore. However, working on an infinite interval allows  us  to consider  the homogeneous version of the embedding proved in   \cite[Lem.~3.1]{amp}, which asserts that   $\L^2(\R_{+}; \Hdot^1(\R^n)) \cap \Hdot^1(\R_{+}; \Hdot^{-1}(\R^n)) \hookrightarrow \C_{0}([0,\infty); \L^{2}(\R^{n}))+\IC$.  Hence, the sought  global weak solutions in $\L^2(\R_{+};\Hdot^1(\R^n))$ when $F=0$ and $\psi=0$  are automatically within  $\C_{0}([0,\infty); \L^{2}(\R^{n}))$, and uniqueness follows again using  integral identities, see \cite[Thm. 3.11]{amp}. 

A second and direct proof of existence in  $\L^2(\R_{+};\Hdot^1(\R^n))\cap \C_{0}([0,\infty); \L^{2}(\R^{n}))$, starting from semigroup theory  in the case of time-independent coefficients and using weak convergence in the equation from integral identities to pass to the general case,  is   given  in \cite[Thm. 3.11]{amp} when $F=0$, and it is not  hard to pursue the same  method of proof when $F\ne 0$. 

A third and full proof of existence and uniqueness with a different method based on a variational approach is in \cite{ae}, Thm.~2.54 and Section 2.14. 
\end{proof}

Existence and uniqueness imply the construction of propagators which we review now (see \cite{ae}). There exists a unique family of contractions $\Gamma(t,s), -\infty<s\le t<\infty$ on $\L^{2}$, $ \Gamma(t,t)$ being the identity, defined by setting $\Gamma(t,s)\psi$ to be  the value in $\L^{2}$ of the solution at time $t$, with initial data $\psi$ at time $s$ (shifting the origin of time). 
 In this generality, their adjoints $\tilde\Gamma(s,t):=\Gamma(t,s)^*$ can be shown to be the propagators for the adjoint backward Cauchy problem $-\partial_{s}v-\div (A^*\nabla v)=0$ on $(-\infty,t)\times \R^n$ with given final data at time $t$. 
 
Let $F\in \L^2(\R_{+};  \L^2(\R^n; \IC^n))$. The solution  $u$ such that $\nabla u=\cL F$ given in Theorem~\ref{thm:CP} can be represented in $\L^{2}$ as
\begin{equation}
\label{eq:representation}
u(t, .)=  \int_{0}^t\Gamma(t,s)\div F(s, .)\, \d s \end{equation}
where the integral is understood  in the weak sense, that is, for all $\tpsi \in \L^{2}$,
\begin{equation}
\label{eq:weaksense}
\angle{u(t, .)}{\tpsi}= -  \int_{0}^t\angle{F(s, .)}{\nabla\tilde\Gamma(s,t)\tpsi}\, \d s. 
\end{equation}
with $\angle{.}{.}$ being here the canonical $\L^2(\R^n;\IC^d)$ inner product  (for $d=1,n$). 
 The last integral converges thanks to the inequality
 \begin{align*}
 \int_{0}^t|\angle{F(s, .)}{\nabla\tilde\Gamma(s,t)\tpsi}|\, \d s & \le \|F\|_{\L^2(\R_{+};  \L^2(\R^n; \IC^n))}\|\nabla\tilde\Gamma(\cdot,t)\tpsi\|_{\L^2(-\infty,t; \L^2(\R^n;\IC^n))}
 \\
 &
 \le \frac 1{\sqrt {2\lambda}} \|F\|_{\L^2(\R_{+};  \L^2(\R^n; \IC^n))}\|\tpsi\|_{\L^{2}}
\end{align*}
on applying to the backward solution $\tilde\Gamma(\cdot,t)\tpsi$ the estimates of Theorem \ref{thm:CP} reversing the sense of time. 

We shall also need  {\em $\L^2$-$\L^2$ off-diagonal bounds} for $\Gamma(t,s)$ holding at this level of generality (\cite{HK} or \cite{amp} or \cite{ae}):   
 there exists a constant $c>0$, depending only on $\lambda,\Lambda$ and dimension  such that for all Borel sets 
$E$, $F$ in $\R^n$, all $s<t$, and all $f\in \L^{2}$, we have that 
\begin{equation}
\label{eq:od}
\n \one_E \Gamma(t,s) \one_F f\n_{\L^{2}} \le 
e^{-c\frac{d(E,F)^2}{t-s}}  \n \one_F f\n_{\L^{2}},
\end{equation}
 with $d(E,F) := \inf\{|x-y|: \ x\in E, \ y\in F\}$.

The theory in \cite{ae} shows that there is an extension principle for parabolic problems in this context (thanks to causality). For $G\in \L^2(\R; \L^2(\R^n;\IC^n))$, the equation $\partial_{t}v-\div (A \nabla v)=-\div G$ on $\R \times \R^n$ has a weak solution, unique up to a constant, in the class $\L^2(\R; \Hdot^1(\R^n))$ and one has 
$\|\nabla v\|_{\L^2(\R; \L^2(\R^n;\IC^n))}\le \frac 1 \lambda \|G\|_{\L^2(\R; \L^2(\R^n;\IC^n))}$. Moreover, one can choose the constant $c$ so that  $v-c\in \C_{0}(\R; \L^{2}(\R^{n}))$. Thus, taking $G$ to be the zero extension  to $(-\infty,0)\times \R^n$ of $F\in \L^2(\R_{+};  \L^2(\R^n; \IC^n)) $, we see that the zero extension  to $(-\infty,0)\times \R^n$ of $u$ must agree with $v-c $. In particular,  this extension of $u$ is the unique solution in $\L^2(\R; \Hdot^1(\R^n))$ of $\partial_{t}v-\div (A \nabla v)=\div G$ on $\R \times \R^n$ that vanishes on $\{0\}\times \R^n$, and the formulas \eqref{eq:representation} and \eqref{eq:weaksense} are valid for all $t\in \R$ provided one integrates from $-\infty$. We shall use freely this extension principle without changing notation (precisely, the Lions operator can be defined on $\L^2(\R; \L^2(\R^n;\IC^n)$ and it commutes with such zero extensions).

Remark that we have defined $u$ and not only $\nabla u (=\cL F)$. Formally, one is tempted to write 
\begin{align*}
 u(t,x)&= \int _{0} ^{t} \Gamma(t,s)\div F(s,.)(x)\, \d s,
\\
\cL(F)(t,x) &= \int _{0} ^{t} \nabla \Gamma(t,s)\div F(s,.)(x)\, \d s.
\end{align*}
This is a good rule of thumb but the integrals do not converge as such. 
Given $T>0$, the time truncated and rescaled solution operator is defined by 
$$
\left.\begin{array}{cccc}\widetilde\cL_{[0,T]}: & \L^2(\R_{+};  \L^2(\R^n; \IC^n)) & \rightarrow & \L^2(\R_{+};  \L^{2}(\R^n)) \\ & F & \mapsto & [(t,x)\mapsto \frac{1_{[0,T]}(t)}{T^{1/2}} u(t,x)]\end{array}\right.
$$
   that is,
$$
\widetilde\cL_{[0,T]}(F):(t,x) \mapsto \frac{1_{[0,T]}(t)}{T^{1/2}}\bigg(\int _{0} ^{t} \Gamma(t,s)\div F(s,.)\, \d s\bigg)(x).
$$
 The estimates above show that $\widetilde\cL_{[0,T]}$ is bounded, uniformly with respect to $T$ thanks to  the normalisation.

\section{Maximal regularity for the Lions operator in tent spaces}
\label{sec:main}

 Recall that the tent space $\T^{p}$, $0<p<\infty$, is defined by \eqref{def:Tp2} for complex-valued functions.
The spaces are Banach when $p\ge 1$ and quasi-Banach when $p<1$. If $p=\infty$, then $\T^{\infty}$ is the (Banach) space of measurable functions such that 
\begin{equation}\label{def:Tinfty2}
\|g\|_{\T^{\infty}} := \sup_{x\in \R^n,r>0}
\bigg( \int_{0}^{r^2} \fint_{B(x,{r})} 
 |g(t,y)|^{2} \,{\d y \,\d t}\bigg)^{\frac{1}{2}}<\infty \end{equation}

It is known that the spaces $\T^{p}$ interpolate by the complex method for $ 1 \leq p\le \infty$ and the real method for $0<p\leq \infty$;  see \cite{cms} and \cite{long}. 

 If $\H$ is a Hilbert space, a $\H$-valued function $g$, its norm in $\H$ being denoted by  $|g|_{\H}$,  belongs to $\T^{p}(\H)$ if and only if $|g|_{\H}$ belongs to $\T^{p}$ and  $\|g\|_{\T^{p}(\H)}= \||g|_{\H}\|_{\T^{p}}$.

By Fubini's theorem, $\T^{2}$ agrees with $\L^2(\R_{+}\times \R^n)$  isometrically, and the dual of $\T^p$, $1\le p<\infty$, is $\T^{p'}$, $p'$ being the H\"older conjugate exponent to $p$,   for the $\L^2(\R_{+}\times \R^n)$ duality, see \cite{cms}. 

As $\T^2=\L^2(\R_{+}\times \R^n)=\L^2(\R_{+};\L^2(\R^n))$ isometrically, the Lions operator $\cL$ and the time truncated and rescaled solution operator $\widetilde\cL_{[0,T]}$ are defined and bounded  from 
$\T^{2}(\IC^n)$  into $\T^{2}(\IC^d)$, $d=n,1$, respectively,  uniformly in $T$ for the second one.

\begin{thm}
\label{thm:main1}
For $2< p \le \infty$, and $T>0$, $\cL$ and $\widetilde\cL_{[0,T]}$ extend to bounded operators from 
$\T^{p}(\IC^n)$  into $\T^{p}(\IC^d)$, $d=n,1$, respectively. 
\end{thm}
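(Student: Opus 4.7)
The plan is twofold: first establish the endpoint $p=\infty$ by a direct Carleson-type argument, then obtain $2<p<\infty$ by interpolation with the $\T^2=\L^2$ boundedness from Theorem~\ref{thm:CP}, using the complex or real interpolation scale $(\T^2,\T^\infty)_\theta=\T^p$ recalled at the start of this section.

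For the endpoint $p=\infty$ and the operator $\cL$, fix a ball $B=B(x_0,r)\subset\R^n$ and aim at $\frac{1}{|B|}\int_0^{r^2}\int_B|\cL F|^2\lesssim \|F\|_{\T^\infty}^2$. Split $F=F_1+F_2$ with $F_1:=F\,\mathbf{1}_{4B\times(0,16r^2)}$. The Carleson-type definition of $\T^\infty$ gives $\|F_1\|_{\T^2}^2\le |4B|\|F\|_{\T^\infty}^2\lesssim |B|\|F\|_{\T^\infty}^2$, so Theorem~\ref{thm:CP} settles the local piece: $\int_0^{r^2}\int_B|\cL F_1|^2\le \|\cL F_1\|_{\T^2}^2\lesssim|B|\|F\|_{\T^\infty}^2$. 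For the global piece, let $u_2$ be the solution with $u_2(0)=0$ and $\nabla u_2=\cL F_2$. Since $F_2$ vanishes on $4B\times(0,16r^2)$, testing the equation against $u_2\eta^2$ with a space-time cutoff $\eta$ supported in $2B\times[0,2r^2]$ and equal to $1$ on $B\times[0,r^2]$, and using $u_2(0)=0$ to kill the initial boundary term, yields the Caccioppoli-type bound
$$\int_0^{r^2}\int_B|\nabla u_2|^2\lesssim r^{-2}\int_0^{2r^2}\int_{2B}|u_2|^2.$$

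The right-hand side is treated by duality: pair with $\varphi\in\L^2((0,2r^2)\times 2B)$ of unit norm, insert the weak representation \eqref{eq:weaksense}
$$\angle{u_2(t,\cdot)}{\varphi(t,\cdot)}=-\int_0^t\angle{F_2(s,\cdot)}{\nabla\tilde\Gamma(s,t)\varphi(t,\cdot)}\,\d s,$$
and decompose $F_2$ in space into dyadic annuli $A_j$ of radius $\sim 2^j r$ outside $4B$, $j\ge 0$. A Cauchy--Schwarz estimate annulus by annulus combines the Carleson bound $\int_0^{2r^2}\int_{A_j}|F|^2\lesssim 2^{jn}|B|\|F\|_{\T^\infty}^2$ with the space-time integrated gradient off-diagonal estimate announced in the introduction, applied at scale $4r$ (valid since $t\le 2r^2<(4r)^2$): it yields $\int_0^t\int_{A_j}|\nabla\tilde\Gamma(s,t)\varphi(t,\cdot)|^2\lesssim e^{-c\,4^j}\|\varphi(t,\cdot)\|_{\L^2(4B)}^2$ for $j\ge 1$, the innermost annulus being controlled by the trivial energy bound on $\nabla\tilde\Gamma$. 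The exponential factor defeats the polynomial growth $2^{jn/2}$, the sum in $j$ converges, and a final Cauchy--Schwarz in $t$ produces $\int_0^{2r^2}\int_{2B}|u_2|^2\lesssim|B|\,r^2\|F\|_{\T^\infty}^2$, completing the bound.

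The operator $\widetilde\cL_{[0,T]}$ is handled by the same template. One estimates $u$ itself rather than $\nabla u$, so Caccioppoli is unnecessary: the energy bound $\sup_t\|u_1(t)\|_{\L^2}^2\lesssim\|F_1\|_{\L^2}^2$ of Theorem~\ref{thm:CP} handles the local piece directly, and the global piece uses the same duality and off-diagonal scheme. The local/global splitting is performed at scale $\min(r,\sqrt T)$ so that the normalisation $T^{-1/2}$ exactly absorbs the time factor coming from the energy estimate, producing a bound independent of $T$. The main technical obstacle throughout is the absence of pointwise bounds on $\nabla\Gamma$ at this generality: the space-time integrated off-diagonal estimate announced in the introduction, derived from \eqref{eq:od} together with Caccioppoli on each annulus, is the crucial substitute, and its exponential factor $e^{-c\,4^j}$ amply absorbs the polynomial Carleson growth, rendering the regime $p>2$ considerably softer than the case $p<2$ treated later.
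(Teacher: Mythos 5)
Your estimates are essentially the paper's: the same local/global splitting relative to a ball, Caccioppoli to pass from $\nabla u$ to $u$, the representation \eqref{eq:weaksense} dualized against the backward propagator, Whitney covering plus the off-diagonal bounds \eqref{eq:od} to get the factor $e^{-c4^j}$ that beats the polynomial Carleson growth $2^{jn}$, and interpolation for $2<p<\infty$. The numerics all check out (e.g.\ $\|F_1\|_{\T^2}^2\lesssim|B|\,\|F\|_{\T^\infty}^2$ and the annulus-by-annulus summation).

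There is, however, a genuine gap at the endpoint $p=\infty$, which the theorem claims. You write ``let $u_2$ be the solution with $\nabla u_2=\cL F_2$'', but $F_2=F-F\,\mathbf{1}_{4B\times(0,16r^2)}$ is a general $\T^\infty$ function minus a compactly supported piece, hence not in $\L^2(\R_+;\L^2)$, so $\cL F_2$ is not yet defined: your argument is an a priori estimate for an operator you have not constructed on $\T^\infty$. This cannot be repaired by density, because $\T^2\cap\T^\infty$ is \emph{not} dense in $\T^\infty$, nor by $\T^1$--$\T^\infty$ duality (boundedness of the adjoint on $\T^1$ is unknown here). This is exactly why the paper decomposes $F$ from the outset into countably many $\L^2$ pieces supported in the parabolic regions $C_j(B)$, defines $u=\sum_j u_j$ with each $u_j$ an honest energy solution, and then must additionally prove convergence of the series, that the sum is a weak solution, that it is independent of the reference ball $B$, and that it agrees with $\cL F$ when $F\in\T^2\cap\T^\infty$ (items (ii)--(v) of the paper's proof). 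Your a priori bound on the intersection does suffice, via interpolation and density of $\T^2\cap\T^\infty$ in $\T^p$, for $2<p<\infty$; to claim $p=\infty$ you must supply the construction and well-definedness of the extension. (A minor further point: for $\widetilde\cL_{[0,T]}$ Caccioppoli is still needed inside the duality step, on the backward solution, even though you no longer need it to pass from $\nabla u$ to $u$.)
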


\begin{thm}
\label{thm:main2}
 Assume that $A$ has real-valued coefficients. There exists $\varepsilon>0$ such that if   $1-\varepsilon<p<2$, and $T>0$,  then $\cL$  and $\widetilde\cL_{[0,T]}$ extend to bounded operators  from 
$\T^{p}(\IC^n)$  into $\T^{p}(\IC^d)$, $d=n,1$, respectively. \end{thm}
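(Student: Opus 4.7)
The plan is to extrapolate from the $\T^{2}$ boundedness of $\cL$ by means of an atomic decomposition of $\T^{p}$ for $p\le 1$, combined with interpolation with $\T^{2}$ to cover the full range $p\in(1-\eps,2)$. Recall that each $f\in\T^{p}$, $p\le 1$, admits a decomposition $f=\sum_{j}\lambda_{j}a_{j}$ with $\sum|\lambda_{j}|^{p}\lesssim\|f\|_{\T^{p}}^{p}$, the atoms $a_{j}$ being supported in tents $\widehat{B_{j}}=\{(t,y):t<r_{j}^{2},\,y\in B_{j}\}$ over balls of radius $r_{j}$ and satisfying $\|a_{j}\|_{\T^{2}}\le|B_{j}|^{\frac12-\frac1p}$. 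It therefore suffices to prove, uniformly in atoms $a$ supported in a tent $\widehat B$ over $B=B(x_{B},r)$, a molecular estimate for $\cL a$ in $\T^{p}(\IC^{n})$ providing enough off-support decay to absorb the atomic $\T^{2}$-size. The real-valued hypothesis on $A$ enters only when descending below $p=1$, through Nash's $\C^{\alpha}$ regularity of weak solutions, which fails for general complex coefficients.

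Fix such an atom $a$ and split $\R^{n}=4B\cup\bigcup_{j\ge 2}C_{j}$ with $C_{j}=2^{j+1}B\setminus 2^{j}B$. The integration of the conical square function of $\cL a$ over $4B$ is controlled by Cauchy-Schwarz in $x$ together with the $\T^{2}$-bound of Theorem~\ref{thm:CP}, yielding a contribution $\lesssim|B|^{1-p/2}$. For each far annulus $C_{j}$ I would estimate $\|\cL a\|_{\T^{2}(\R_{+}\times C_{j})}$ by duality: this norm equals the supremum over $\|\varphi\|_{\L^{2}(\R_{+}\times C_{j})}\le 1$ of
\[
\bigg|\int_{0}^{\infty}\!\!\int_{0}^{t}\angle{a(s,\cdot)}{\nabla\tilde\Gamma(s,t)^{*}\varphi(t,\cdot)}\,\d s\,\d t\bigg|
\]
via the weak representation \eqref{eq:weaksense}. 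Since $a(s,\cdot)$ is supported in $B$, the inner pairing is bounded by $\|a(s,\cdot)\|_{\L^{2}(B)}\,\|\nabla\tilde\Gamma(s,t)^{*}\varphi(t,\cdot)\|_{\L^{2}(B)}$, so the key quantity to control is $\int_{0}^{t}\int_{B}|\nabla\tilde\Gamma(s,t)^{*}\varphi(t,\cdot)|^{2}$. Combining the $\L^{2}$-off-diagonal bound \eqref{eq:od} for the backward propagator with the boundary Caccioppoli inequality \eqref{eq:caccio} applied at $\{s=t\}$ on a slight enlargement $C_{j}'\supset C_{j}$ produces exactly the Gaussian space-time estimate $e^{-c4^{j}r^{2}/t}\|\varphi(t,\cdot)\|_{\L^{2}(C_{j}')}^{2}$ flagged in the introduction.

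Carrying out the $(s,t)$ integration with a Whitney decomposition of the $t$-axis into parabolic scales $t\sim 4^{k}r^{2}$ yields a bound $\|\cL a\|_{\T^{2}(\R_{+}\times C_{j})}\lesssim 2^{-j\sigma}|B|^{\frac12-\frac1p}$ for some modest $\sigma=\sigma(n)>0$. However, summing $\sum_{j}2^{jn(1-p/2)}\cdot 2^{-j\sigma p}$ converges only in a narrow range just below $p=2$, not past $p=1$. To push to $1-\eps<p\le 1$, I would upgrade this through Nash's $\C^{\alpha}$ regularity for real-coefficient weak solutions of $-\partial_{s}v-\div(A^{*}\nabla v)=0$ on sub-cylinders of $\R_{+}\times C_{j}'$. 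This elevates the control of $\tilde\Gamma(s,t)^{*}\varphi$ from $\L^{2}$-averaged to $\C^{\alpha}$-averaged on $B$ and, combined again with Caccioppoli, produces an additional factor of order $(r/(2^{j}r))^{\alpha}=2^{-j\alpha}$ because one is measuring a Hölder regular function at scale $r$ inside a region separated by $\sim 2^{j}r$ from the source. The resulting series $\sum_{j}2^{jn(1-p/2)}\cdot 2^{-j(\sigma+\alpha)p}$ then converges whenever $p>1-\eps$, for some $\eps=\eps(n,\alpha,\lambda,\Lambda)>0$.

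The hardest step will be the quantitative chaining of these ingredients — boundary Caccioppoli, $\L^{2}$-off-diagonal bounds, and Nash's $\C^{\alpha}$ regularity of the backward propagator — in a way uniform in $t$ and compatible with the parabolic scaling $\sqrt t$ of the tent norm. This requires a dichotomy in $t$ between the short-time regime $t\lesssim 4^{j}r^{2}$ (in which the exponential factor alone suffices) and the long-time regime $t\gtrsim 4^{j}r^{2}$ (in which only Hölder regularity produces usable decay), implemented through the Whitney decomposition. Finally, the truncated operator $\widetilde\cL_{[0,T]}$ is handled by applying the same scheme to $u$ instead of $\nabla u$: the prefactor $T^{-1/2}$ is compensated by the extra $T^{1/2}$ arising from the global bound \eqref{eq:enestF} on $u$, and the local-to-global annular decay is provided by the same Caccioppoli/off-diagonal/Hölder estimates applied without the outer spatial gradient.
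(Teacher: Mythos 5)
Your proposal follows essentially the same route as the paper: reduce to $\T^{p}$ atoms for $p\le 1$ via interpolation with $\T^{2}$, decompose $\cL a$ over parabolic annuli, control the near part by the $\T^{2}$ bound and the far parts by combining Caccioppoli's inequality with the $\L^2$--$\L^2$ off-diagonal bounds of the propagator, and observe that this decay alone fails below $p=1$ unless Nash's $\C^{\alpha}$ regularity (together with the pointwise Gaussian kernel bound, giving $\L^1$-based rather than $\L^2$-based control) supplies the extra factor $2^{-j\eta}$ in the regime $t\gtrsim 4^{j}r^{2}$. The only organisational difference is that the paper applies Caccioppoli to $\nabla u$ first and then dualises $u$ itself against scalar test functions, rather than dualising $\nabla u$ directly, which spares one from feeding divergence-form ($\H^{-1}$) final data into the backward propagator; this is a technical convenience, not a different idea.
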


\begin{proof}[Proof of Theorem \ref{thm:main1}]  
We prove the result for $\cL$. The proof for $\widetilde\cL_{[0,T]}$ is similar.  By complex interpolation between $\T^2$ and $\T^\infty$ and density of $\T^2\cap \T^\infty$ in $\T^p$ when $2<p<\infty$, it suffices to consider the case $p=\infty$ as we already have the case $p=2$. We do it by a direct method which yields the construction  of the extension on $\T^\infty$. It cannot be by density as $\T^2\cap \T^\infty$ is not dense in $\T^\infty$. We can not use  duality with $\T^1$ and weak star topology  either.  Indeed, under our assumption here, we do not know the boundedness on $\T^1$ of the adjoint operator. 

   For a ball  $B=B(x,r)$ of $\R^n$, 
 write $T_{j}(B)= (0, 4^jr^2)\times 2^jB$ for $j$ integer, $j\ge 0$, and set
 $C_{2}(B)=T_{2}(B)$ and $C_{j}(B)= T_{j}(B) \setminus T_{j-1}(B)$ if $j\ge 3$. 
 
 We let
$F \in \T^{\infty}(\IC^n)$ and extend $F$ by 0 in the lower half-space. Define $F_{j}=F$ on $C_{j}(B)$ and 0 otherwise, so that each $F_{j}\in \L^2(\R; \L^2(\R^n;\IC^n))$ and pick $u_{j}\in \L^2(\R; \Hdot^1(\R^n))\cap \C_{0}(\R;  \L^2(\R^n))$  defined from Theorem \ref{thm:CP} and  the extension principle,  so that  $u_{j}=0$ on the lower half-space with  $\nabla u_{j}=\cL F_{j}$ on the upper-half space.   As $F=F_{2}+ F_{3}+ \cdots$, we let 
 $u=u_{2}+ u_{3}+ \cdots$. We shall show  the following facts. 
 \begin{enumerate}
 \item For   some constant $C$  independent of $B$ and $F$ 
 \begin{equation}
\label{eq:CM}
\int_{0}^{r^2} \int_{B} 
 |\nabla u(t,y)|^{2} \,{\d y\, \d t} \le C r^n  \|F\|_{\T^{\infty}(\IC^n)}^2,
 \end{equation}
 where we  use the notation $|\nabla u|$ for $|\nabla u|_{\IC^n}$. 
\item The series defining $u$ converges in $\L^2(a,b; \H^1( \mathcal{O}))\cap \L^\infty(a,b; \L^2( \mathcal{O} ))$ for any bounded interval $[a,b]$ and bounded open set  $ \mathcal{O} \subset \R^n$.
\item $u$ is a weak solution of  $\partial_{t}u-\div (A \nabla u)=\div F$ in any bounded open region $(a,b)\times  \mathcal{O} $ of $\ree$, that is identically 0 in the lower half-space. 
\item $u$ does not depend on the choice of $B$. 
\item If $F\in \T^{2}(\IC^n)\cap \T^{\infty}(\IC^n)$, then $\nabla u$ agrees with $\cL F$. 
\end{enumerate}

Assuming the above items hold, we have built the extension of $\cL$ initially defined on  $\T^{2}(\IC^n)$ to
$\T^{\infty}(\IC^n)$ and this extension is bounded. 

Next, we prove the five items successively. 

\medskip

\paragraph{\itshape Proof of (i)}  As $F_{j}\in \L^2(\R_{+}; \L^2(\R^n; \IC^n))$, we let $u_{j}$ be the weak solution of  \eqref{eq:CP} with right hand-side $\div F_{j}$ and zero initial data provided by Theorem \ref{thm:CP}.  We adopt the convention that $u_{j}=0$ on the lower half space. 
 
 For $j=2$, we use the boundedness of $\cL$, so that 
\begin{equation}
\label{eq:u2}
\int_{0}^{r^2}\int_{B} |\nabla	 u_{2}|^2\, \d x\,\d t   \le \lambda^{-1}\|F_{2}\|_{\L^2(\R_{+}, \L^2(\R^n; \IC^n))}^2 \le  \lambda^{-1} (4r)^n  \|F\|_{\T^{\infty}(\IC^n)}^2.
\end{equation}
 
 For $j\ge 3$, by Caccioppoli's inequality  \eqref{eq:caccio} as $F_{j}$ vanishes on a neighborhood of $(0,r^2)\times 2B$ (and using  $u_{j}=0$ for $t<0$), 
 $$
 \int_{0}^{r^2}\int_{B} |\nabla	 u_{j}|^2\, \d x\,\d t  \le Cr^{-2} \int_{0}^{r^2}\int_{2B} |u_{j}|^2\, \d x\,\d t.
 $$
 We fix  $t\in (0,r^2)$ and let $\varphi\in C_{0}^\infty(2B)$. Then, by \eqref{eq:weaksense}, 
\begin{equation}\label{eq:uj}
\angle{u_{j}(t)}{\varphi}= - \int_{0}^{t}\int_{\R^n} F_{j}(s,y)\cdot \nabla \tilde \Gamma (s,t)\varphi(y)\, \d y\, \d s.
\end{equation}
Set $\tilde v_{t}(s,y)=\tilde \Gamma (s,t)\varphi(y)$, which is a solution of the backward equation $-\partial_{s}v-\div (A^*\nabla v)=0$ on $(-\infty,t)\times \R^n$ with final data $\varphi$ at time $t$. By Cauchy-Schwarz inequality, and using $t\le 4^{j-1}r^2$, it follows that 
\begin{equation}
\label{eq:phiB1}
|\angle{u_{j}(t)}{\varphi}|^2 \le \int_{0}^t\int_{2^{j}B\setminus 2^{j-1}B} |F(s,y)|^2\, \d y\, \d s \int_{0}^t\int_{2^{j}B\setminus 2^{j-1}B} |\nabla \tilde v_{t}(s,y)|^2\, \d y\, \d s.
\end{equation}
The first integral on the right-hand side is bounded by $(2^jr)^n \|F\|_{\T^{\infty}(\IC^n)}^2.$
Using a bounded covering of the range of integration by Whitney balls $(t-2\delta^2, t-\delta^2)\times B_{\delta}$ for the region $(-\infty,t)\times \R^n$, applying Caccioppoli's  inequality \eqref{eq:caccio} on each and summing up we obtain
$$
\int_{0}^t\int_{2^{j}B\setminus 2^{j-1}B} |\nabla \tilde v_{t}(s,y)|^2\, \d y\, \d s \lesssim \int_{0}^t\int_{2^{j+1/2}B\setminus 2^{j-3/2}B} \frac{| \tilde v_{t}(s,y)|^2}{t-s}\, \d y\, \d s.
$$
Since $2^{j+1/2}B\setminus 2^{j-3/2}B$ is at distance  $c2^jr$ from 
 the support $2B$ of $\varphi$, we obtain from the $\L^2-\L^2$ off-diagonal bounds  of $\tilde \Gamma (s,t)$ derived by duality from \eqref{eq:od}  that 
\begin{equation}
\label{eq:phiB2}
\int_{0}^t\int_{2^{j+1/2}B\setminus 2^{j-3/2}B} \frac{| \tilde v_{t}(s,y)|^2}{t-s}\, \d y\, \d s \lesssim  \int_{0}^t  \frac{e^{-c4^jr^2/(t-s)}}{t-s}\, \d s \ \|\varphi\|_{\L^2}^2 \lesssim  e^{-c4^jr^2/t} \ \|\varphi\|_{\L^2}^2.
\end{equation}
Thus,
$$
\|u_{j}(t)\|_{\L^2(2B)}\lesssim (2^jr)^{n/2} \|F\|_{\T^{\infty}(\IC^n)} e^{-c4^jr^2/2t} 
$$
so that 
$$
\int_{0}^{r^2}\int_{2B} |u_{j}|^2\, \d x\, \d t \lesssim (2^jr)^n \|F\|_{\T^{\infty}(\IC^n)}^2  \int_{0}^{r^2} e^{-c4^jr^2/t}\, \d t \lesssim (2^jr)^n \|F\|_{\T^{\infty}(\IC^n)}^2 4^{-j} r^2 e^{-c4^j}
$$ and we obtain 
$$\int_{0}^{r^2}\int_{B} |\nabla	 u_{j}|^2\, \d x\, \d t \lesssim 2^{j{(n-2)}}e^{-c4^j} r^n  \|F\|_{\T^{\infty}(\IC^n)}^2.   
$$
Using Minkowski inequality and summing all the contributions, we obtain
\eqref{eq:CM}. 

\medskip

\paragraph{\itshape Proof of (ii)} As $u=0$ when $t<0$, it  suffices to prove the desired convergence  on $[0,4^k r^2]\times 2^k B$ for  all $k\in \IN$. If we denote $F_{j,B}=F_{j}$ and $u_{j,B}=u_{j}$ the terms defined in (i), then $F_{j,B}= F_{j-k,2^kB}$ and $u_{j,B}= u_{j-k,2^kB}$ for $j-k\ge 3$, and  $F_{2,B}+\cdots + F_{k+2,B}=F_{2,2^kB}$ and $u_{2,B}+\cdots + u_{k+2,B}=u_{2,2^kB}$. The first equality is trivial while the latter follows from uniqueness of the Cauchy problem in Theorem \ref{thm:CP}. 
Thus the series for $\nabla u$ converges in $\L^2((0,4^k r^2)\times 2^k B)$ with bound 
$$
\int_{0}^{4^kr^2}\int_{2^kB} |\nabla	 u|^2\, \d x\, \d t \lesssim (2^k r)^n  \|F\|_{\T^{\infty}(\IC^n)}^2. 
$$
Let us do the $\L^\infty(0,4^k r^2;\L^2(2^k B))$ bound for $u$.  By the same trick, it suffices to do it for $k=0$ with the same decomposition.    But we have shown, if $j\ge 3$ and $0\le t\le r^2$ 
$$
\|u_{j,B}(t)\|_{\L^2(B)}\lesssim (2^jr)^{n/2} \|F\|_{\T^{\infty}(\IC^n)} e^{-c4^jr^2/2t} \le 2^{jn/2}e^{-c4^j/2} r^{n/2} \|F\|_{\T^{\infty}(\IC^n)}
$$
while for $j=2$,  by  \eqref{eq:enestF}, for all $t>0$, 
$$
\|u_{2,B}(t)\|_{\L^{2}} \le \frac 1{\sqrt  {2\lambda}} \|F_{2,B}\|_{\L^2(\R_{+}; \L^2(\R^n,\IC^n))} \le  \frac {1}{ \sqrt{2\lambda}} (4r)^{n/2}  \|F\|_{\T^{\infty}(\IC^n)}.
$$
The convergence of the series for $u$ in $\L^\infty(0, r^2; \L^2(B))$ 
follows. 

\medskip

\paragraph{\itshape Proof of (iii)} Integrating against a test function $\varphi$ with, say, bounded support in $[-4^k r^2,4^k r^2]\times 2^k B$ for some $k\ge 0$, then the 
series $F=F_{2,2^kB}+ F_{3,2^kB}+ \cdots$ reduces to the first term on the support of  $\varphi$. We have by definition  that $u_{j, 2^kB}$ is a weak solution of   $\partial_{t}u_{j, 2^kB}-\div (A \nabla u_{j,2^kB})=\div F_{j,2^kB}$ in $\ree$ for $j\ge 2$. Thus,   
$\partial_{t}u_{2,2^kB}-\div (A \nabla u_{2,2^kB})=\div F$ and 
$\partial_{t}u_{j,2^kB}-\div (A \nabla u_{j,2^kB})=0$ when $j\ge 3$ in $(-4^k r^2,4^k r^2)\times 2^k B$. The convergence established in (ii) allows one to add up these equations in  $(-4^k r^2,4^k r^2)\times 2^k B$ so that  $u$ is a weak solution of  $\partial_{t}u-\div (A \nabla u)=\div F$ in $(-4^k r^2,4^k r^2)\times  2^k B$. 

\medskip

\paragraph{\itshape Proof of (iv)} If we start from another ball $B'$, decompose $F=F_{2,B'}+F_{3,B'}+\cdots$  and create $u_{B'}$ as we created $u=u_{B}$.  Let $B_{\rho}$ be a ball of radius $\rho>0$. We shall show that $u_{B}=u_{B'}$ in $\L^\infty(0, \rho^2; \L^2(B_{\rho}))$. This will show that $u_{B}=u_{B'}$ on $\ree$.  Write
$u_{B}= U_{j,B}+ R_{j,B}$ where for $j\ge 2$, $R_{j,B}$ is the tail with the sum starting at $j+1$. Pick $k\in \IN$ such that $B_{\rho} \subset 2^{k}B\cap 2^{k}B'$. Then observe that for $j\ge 0$, $2^jB_{\rho}\subset 2^{j+k}B\cap 2^{j+k}B'$. Write for $j\ge 2$, 
$u_{B}-u_{B'}= (U_{j+k,B}-U_{j+k,B'})+ R_{j+k,B}-R_{j+k,B'}$. 
From the previous estimates, as $j\to \infty$,   we have that $R_{j+k,B}$ converges to zero in 
$\L^\infty(0, 4^k r^2; \L^2(2^kB))$ and $R_{j+k,B'}$ converges to zero in 
$\L^\infty(0, 4^k{r'}^2; \L^2(2^kB'))$. By uniqueness, $U_{j+k,B}-U_{j+k,B'}$ is the weak solution in $\L^2(\R_{+}; \Hdot^1(\R^n))$ of the parabolic equation with right-hand side $\div G_{j}$ where $G_{j}= F(1_{T_{j+k}(B)}- 1_{T_{j+k}(B')})$ and zero initial data. Note that $G_{j}$  is uniformly bounded in  $\T^{\infty}(\IC^n)$ with respect to $j$ and is zero at least in $T_{j}(B_{\rho})$. Hence, performing the same decomposition  as above according to the ball $B_{\rho}$, we have that $U_{j+k,B}-U_{j+k,B'}$ converges to 0 in 
$\L^\infty(0, \rho^2; \L^2(B_{\rho}))$ as $j\to \infty$. 

\medskip

\paragraph{\itshape Proof of (v)} If $F$ also belongs to $\T^{2}(\IC^n)$, the convergence of the series used to decompose $F$ given a ball $B$ is also in $\T^{2}(\IC^n)$, hence $\cL F$ is the gradient of the series to define $u_{B}$, that is, $\nabla u_{B}$ coincides with $\cL F$. 
 \end{proof}

\begin{proof}[Proof of Theorem \ref{thm:main2}]  
 Again we just prove the result for $\cL$ as the proof for $\widetilde\cL_{[0,T]}$ is similar.  
By complex interpolation between $\T^{1}$ and $\T^{2}$ and density of $\T^1\cap \T^2$ in $\T^p$ when $1<p<\infty$, it suffices to establish the required bound when $p\le 1$. In this case,  atomic decomposition of tent spaces implies that a bounded linear operator $T$ on $\T^{2}(\IC^n)$ extends to a bounded operator on  $\T^{p}(\IC^n)$  if it is uniformly bounded on $\T^{p}(\IC^n)$ atoms (see Step 3 of the proof of \cite[Thm 4.9]{amr}, which can be easily adapted for $p<1$),  that is, for all 
$a:(0,\infty)\times \R^{n}\to \IC^n$, with support in a region $(0,r^2)\times B_{r}$, where $B_{r}$ is a  ball  with radius $r$, and estimate
\begin{equation}
\int_{0}^{r^2}\int_{B_{r}} |a(t,x)|_{\IC^n}^2 \, \d x \, \d t \le r^{n(1-2/p)},
\end{equation}
we have for $C$ independent of $a$,
\begin{equation}
\label{eq:TboundedonTpatom}
\|T a\|_{\T^{p}(\IC^n)} \le C.
\end{equation}

We show that there exists $\varepsilon>0$ so that one can establish \eqref{eq:TboundedonTpatom} for $\cL$ and $1-\varepsilon<p\le 1$. 
 
By translation and dilation invariance of the hypotheses on the coefficients $A$, we may assume $r=1$ and $B=B_{1}$ is the unit ball. As mentioned,  we extend $a$ by 0 on the lower-half space and from now on consider $\cL a=\nabla u$ with $u$ a weak solution of $\partial_{t}u-\div (A \nabla u)=\div a$ on $\R \times \R^n$ (recall that $A$ is defined on all of $\R \times \R^n$) which vanishes on the lower-half space. 

As $a\in \L^2(\ree)$, we know that $u\in \C_{0}(\R; L^2(\R^n))$ and is given   for each $t\in \R$ by $u(t)=  \int_{-\infty}^t\Gamma(t,s)(\div a(s))\, \d s$ using \eqref{eq:representation}. 

We write $\nabla u=\sum_{j\ge 2} F_{j}$ where $F_{j}=1_{C_{j}(B)}\nabla u$ and we show that 
$$\|F_{j}\|_{\L^2(\R_{+};\L^2(\R^n;\C^n))} \le C_{n,\lambda,\Lambda} 2^{-j(n+2\eta)/2}$$ for some $\eta>0$. This implies that we can write $F_{j}=\lambda_{j}a_{j}$ where $a_{j}$ is a $\T^{p}(\IC^n)$ atom, associated to the region $T_{j}(B)$, and $(\lambda_{j})\in \ell^p$ as long as $1\ge p>\frac n{n+\eta}$, so that the series for $\nabla u$ converges in $\T^{p}(\IC^n)$. \\
We begin with $F_{2}$. We have
$$
\|F_{2}\|_{\L^2(\R_{+};\L^2(\R^n;\C^n))}   \le  \|\nabla u\|_{\L^2(\R_{+};\L^2(\R^n;\C^n))}  \le \lambda^{-1}\|a\|_{\L^2(\R_{+};\L^2(\R^n;\C^n))}.
$$

We decompose further $F_{j}=F_{j}^1+F_{j}^2$ when $j\ge 3$ with 
\begin{align*}
 F_{j}^1&=1_{(0,4^{j-1})}(t)F_{j}= 1_{(0,4^{j-1})\times 2^{j}B\setminus 2^{j-1}B} \nabla u
 \\
 F_{j}^2&= 1_{(4^{j-1}, 4^j)}(t)F_{j}= 1_{(4^{j-1}, 4^j)\times 2^jB}\nabla u.
\end{align*}

We look at $F_{j}^1$ first. Using Caccioppoli's inequality \eqref{eq:caccio} and $u=0$ for $t<0$,   
$$   
\int_{0}^{4^{j-1}}\int_{2^{j}B\setminus 2^{j-1}B} |\nabla	 u|^2\, \d x\,\d t  \le C4^{-j} \int_{0}^{4^{j-1}}\int_{2^{j+1/2}B\setminus 2^{j-3/2}B} |u|^2\, \d x\,\d t . 
$$
 We let $\varphi\in \C_{0}^\infty(2^{j+1/2}B\setminus 2^{j-3/2}B)$. Then we have the representation formula:
\begin{equation}
\label{eq:rep}
\angle{u(t,.)}{\varphi}= - \int_{0}^{t\wedge 1}\int_{B} a(s,y)\cdot \nabla \tilde \Gamma (s,t)\varphi(y)\, \d y\, \d s.
\end{equation}
We begin with  estimating the integral for $t\in (0,32)$. 
Set $\tilde v_{t}(s,y)=\tilde \Gamma (s,t)\varphi(y)$, which is a solution of the backward equation $-\partial_{s}v-\div (A^*\nabla v)=0$ on $(-\infty,t)\times \R^n$ with final data $\varphi$ at time $t$. It follows that 
$$
|\angle{u(t,.)}{\varphi}|^2 \le \int_{0}^t\int_{B} |\nabla \tilde v_{t}(s,y)|^2\, \d y\,\d s.
$$
Using bounded covering of the range of integration by Whitney balls $(t-2\delta^2, t-\delta^2)\times B_{\delta}$ with $B_{\delta}$ balls of radius $\delta$,  applying Caccioppoli's  inequality \eqref{eq:caccio} for backwards solutions on each Whitney ball and summing up, we obtain
$$
\int_{0}^t\int_{B} |\nabla \tilde v_{t}(s,y)|^2\, \d y\,\d s \lesssim \int_{0}^t\int_{2B} \frac{| \tilde v_{t}(s,y)|^2}{t-s}\, \d y\,\d s.
$$
Since the support of $\varphi$ is at a distance at least $c2^j$ from $2B$, we obtain from the $\L^2-\L^2$ off-diagonal bounds   \eqref{eq:od}   for   $\tilde \Gamma (s,t)$ that 
$$
\int_{0}^t\int_{2B} \frac{| \tilde v_{t}(s,y)|^2}{t-s}\, \d y\, \d s \lesssim  \int_{0}^t  \frac{e^{-c4^j/(t-s)}}{t-s}\, \d s \ \|\varphi\|_{\L^2}^2 \lesssim  e^{-c4^j/t} \ \|\varphi\|_{\L^2}^2.
$$
Thus,
$$
\int_{0}^{32}\int_{2^{j+1/2}B\setminus 2^{j-3/2}B} |u(s,y)|^2\, \d y \, \d s \lesssim  \int_{0}^{32} e^{-c4^j/t}\, \d t \lesssim  e^{-c4^j/32}.
$$
 Now we assume $32\le t$ and $t \le 4^{j-1}$. With $\varphi$ as before, we restart from 
 $$
|\angle{u(t,.)}{\varphi}|^2 \le \int_{0}^1\int_{B} |\nabla \tilde v_{t}(s,y)|^2\, \d y\, \d s.
$$
Write 
$\nabla  \tilde v_{t}(s,y)=\nabla (\tilde v_{t}(s,y)-\tilde v_{t}(0,0))$. As $\tilde v_{t}-\tilde v_{t}(0,0)$ is also a solution of the backward equation, by the Caccioppoli inequality directly on $(0,1)\times B$ as $t-s\ge 31$,  
$$
|\angle{u(t,.)}{\varphi}|^2 \lesssim \int_{0}^{2}\int_{2B} |\tilde v_{t}(s,y)-\tilde v_{t}(0,0)|^2\, \d y\, \d s.
$$
 Using that the coefficients are real, we can invoke Nash's estimate  (see the statement of Theorem C in \cite{aronson}).  This gives us the existence of some $\eta>0$ such 
that, for all $s\in [0,2] $ and $y\in 4B$, one has  $$
|\tilde v_{t}(s,y)-\tilde v_{t}(0,0)| \le C\bigg(\frac{|s|^{1/2}+|y|}{t^{1/2}}\bigg)^\eta \sup_{[-t/8, t/8]\times \overline B(0,\sqrt t/2)} |\tilde v_{t}|,
$$
as  $ 2\le t/16$ and $4B\subset B(0,\sqrt t/2)$.
By the pointwise Gaussian decay of the kernel of $\tilde \Gamma (s,t)$ (see \cite{aronson} or \cite{ae}) and the support condition of $\varphi$, we have for $t\le 4^{j-1}$,
$$
\sup_{[-t/8, t/8]\times \overline B(0,\sqrt t/2)} |\tilde v_{t}|\le Ct^{-n/2}e^{-c4^j/t}\|\varphi\|_{\L^1}.
$$
Thus we obtain
\begin{align*}
 |\angle{u(t,.)}{\varphi}|^2 \lesssim  \int_{0}^{2} |2B| \frac{e^{-2c4^j/t}}{t^{n+\eta}}  \ \|\varphi\|_{\L^1}^2\, \d s
\end{align*}
hence, for $x\in  2^{j+1/2}B\setminus 2^{j-3/2}B$,
$$
|u(t,x)|^2 \lesssim \frac{e^{-2c4^j/t}}{t^{n+\eta}}.
$$
It follows that $\|F_{j}^1\|_{\L^2(\R_{+};  \L^2(\R^n; \IC^n))}^2$  is controlled by 
$$
 4^{-j}e^{-c4^j/32}  +4^{-j} \int_{32}^{4^{j-1}}\int_{2^{j+1/2}B\setminus 2^{j-3/2}B} \frac{e^{-2c4^j/t}}{t^{n+\eta}}\, \d x\, \d t
\lesssim  2^{-j(n+2\eta)}.
$$

We turn to $F_{j}^2$. We start again by using Caccioppoli's inequality
$$ 
\int_{4^{j-1}}^{4^{j}}\int_{2^{j}B} |\nabla	 u|^2\, \d x\,\d t   \le C4^{-j} \int_{4^{j-3/2}}^{4^{j}}\int_{2^{j+1}B} |u|^2\, \d x\,\d t  . 
$$
  We let $\varphi\in \C_{c}^\infty(2^{j+1}B)$. Then as $t>1$, 
$$
\angle{u(t,.)}{\varphi}= -  \int_{0}^{1}\int_{B} a(s,y)\cdot \nabla \tilde \Gamma (s,t)\varphi(y)\, \d y\, \d s.
$$
so that
$$
|\angle{u(t,.)}{\varphi}|^2 \le \int_{0}^1 \int_{B} |\nabla \tilde v_{t}(s,y)|^2\, \d y\,\d s,
$$
with $\tilde v_{t}(s,y)=\tilde \Gamma (s,t)\varphi(y)$. Again, we may replace $\tilde v_{t} $ by $\tilde v_{t}-\tilde v_{t}(0,0)$ and use the Caccioppoli inequality directly on $(0,1)\times B$ as $t \ge 4^{j-1}$  to obtain 
$$
|\angle{u(t,.)}{\varphi}|^2 \lesssim \int_{0}^{2}\int_{2B} |\tilde v_{t}(s,y)-\tilde v_{t}(0,0)|^2\, \d y\,\d s.
$$
Observe that 
$\sup_{[-t/8, t/8]\times \overline B(0,\sqrt t/2)} |\tilde v_{t}| \lesssim t^{-n/2}\|\varphi\|_{1}$, so that  the Nash inequality again yields
$$
|\angle{u(t,.)}{\varphi}|^2 \lesssim t^{-(n+\eta)}\|\varphi\|_{\L^1}^2
$$
and we conclude that 
$$
\|F_{j}^2\|_{\L^2(\R_{+};  \L^2(\R^n; \IC^n))}^2 \lesssim 4^{-j(n+\eta)}|2^{j+1}B| \eqsim 2^{-j(n+2\eta)}.
$$

\end{proof}
\begin{rem} In absence of the Nash inequality, a variant of the argument yields $2^{-jn} $ for the last upper bound, which does not allow  us  to sum the estimates in $j$ in $\T^{p}(\IC^n)$ for $p\le 1$. We have not found how to improve this bound without the Nash inequality. 
 \end{rem}

\begin{rem} The proof of Theorem~\ref{thm:main2} applies to any equation for which  Nash inequality and the Gaussian pointwise upper bound hold. So having real coefficients in $A$ is not necessary. Hofmann and Kim \cite{HK} show that this holds for $\L^\infty$-small complex perturbations of 
real coefficients. This is true also in dimension $n=1$ and $n=2$ for coefficients  close enough in $\L^\infty$  to  a $t$-independent elliptic matrix (and this would work for systems).  
 
\end{rem}

\begin{rem}
\label{rem:CP} Let $1<p<\infty$ and assume that $A$ has real entries. 
 Combining these two results with \cite[Section 7]{amp}, where \eqref{eq:LPestA} is established,    the uniqueness result of  \cite[Thm. 3.11]{amp} when $p=2$ already mentioned in the proof of  Theorem \ref{thm:CP},  and its extension to  all such $p$, see \cite[Thms.~1.1 and~1.5]{zaton},  this proves well-posedness of global weak solutions for  the Cauchy problem  \eqref{eq:CP} with  $\nabla u \in \T^p(\IC^n)$, for data $(\psi,F)\in \L^p\times \T^p(\IC^n)$, the relation  $u(0)=\psi$ being achieved as  $u(t)\to \psi$, $t\to0$,  in $\L^1_{\loc}$ sense. As the proof of uniqueness will be a special case of our well-posedness result  Theorem \ref{thm:spde}, we refer to Step 1 of its proof for details. 
\end{rem} 

\begin{rem} In the case where  $A$ depends only on the $x$ variable,   more can be said, see \cite{ah}, using semigroup techniques and improvement of the singular integral operator theory of \cite{akmp}.  First, the Lions map $F\mapsto \nabla u$ and the map   $F \mapsto ((t,x)\mapsto t^{-1/2}u(t,x))$ are bounded on $\T^p$ for $p$ in a range containing $[\frac{2n}{n+2},\infty]$ even if $A$ has complex entries (thus, Nash's regularity is not needed). Note that the second operator is larger that $\cL_{[0,T]}$ and no truncation is required. Second, estimates, as well as uniqueness, in weighted tent estimates (with a power weight $t^{-2\beta}$  for $\beta$ in some range including $\beta=0$) can also be established. 
\end{rem}

\section{Quadratic maximal regularity operators in tent spaces}
\label{sec:quadratic}

We consider the  quadratic maximal regularity (sublinear) operators whose key role in the study of the stochastic heat equation will be discussed in Section~\ref{sec:stoch}.  The operator $\cQ$ is defined by \eqref{eq:quadmaxheat}, with a companion defined 
for $0<T<\infty$ by
$$
\left.\begin{array}{cccc}\widetilde\cQ_{[0,T]}: & \L^2(\R_{+};  \L^2(\R^n)) & \rightarrow & \L^2(\R_{+};  \L^2(\R^n)) \\ & f & \mapsto & \left[(t,x) \mapsto \big(\frac{1_{[0,T]}(t)} T \int _{0} ^{t} | e^{(t-s) \Delta}f(s,.)(x)|^{2}\,  \d s\big)^{\frac{1}{2}}\right]\end{array}\right.
$$
These operators are well defined and bounded.  By the square function estimate $$\| (t,x) \mapsto \nabla e^{t\Delta}h(x) \|_{\L^2(\R_{+};  \L^2(\R^n;\IC^n))}^2 = \frac 1 2 \|h\|_{\L^{2}}^2$$
 and Fubini's theorem, one has
\begin{align*}
\|\cQ(f)\|_{\L^2(\R_{+};  \L^2(\R^n))} ^{2} 
& =  \int _{0} ^{\infty} \int _{s} ^{\infty} \int _{\R^{n}} |\nabla e^{(t-s) \Delta}f(s,.)(x)|^{2}_{\IC^n}\,  \d x\,\d t\, \d s \\
& = \frac 1 2  \int _{0} ^{\infty} \int _{\R^{n}}  |f(s,x)|^{2}\,  \d x\, \d s 
\\
&= \frac 1 2 \|f\|_{\L^2(\R_{+};  \L^2(\R^n))} ^{2}, 
\end{align*}
while the contractivity of the heat semigroup implies
\begin{align*}
\|\widetilde\cQ_{[0,T]}(f)\|_{\L^2(\R_{+};  \L^2(\R^n))} ^{2} 
& = \frac 1 T   \int _{0} ^{T} \int _{s} ^{T} \int _{\R^{n}} | e^{(t-s) \Delta}f(s,.)(x)|^{2}\, \d x\,\d t\, \d s \\
& \leq   \int _{0} ^{T} \int _{\R^{n}}  |f(s,x)|^{2}\, \d x\, \d s 
\\
&\leq  \|f\|_{\L^2(\R_{+};  \L^2(\R^n))} ^{2}.
\end{align*}

\begin{rem}  Replacing $\frac{1_{[0,T]}(t)} T$ by the larger  $\frac{1_{[0,T]}(t)} t$ in the definition of $\widetilde\cQ_{[0,T]}$ leads to an unbounded operator. 
 \end{rem}

\begin{thm}
\label{thm:quad1}
The quadratic maximal regularity operators $\cQ$ and $\widetilde\cQ_{[0,T]}$ extend to  bounded sublinear operators from $\T^{p}$ to $\T^{p}$ for all $p \in [2,\infty]$.
\end{thm}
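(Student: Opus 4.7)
The case $p=2$ is already handled by the identities displayed just before the theorem, so my plan is to establish the $\T^\infty\to\T^\infty$ endpoint directly and then close the gap by interpolation. Since both operators are sublinear and the interpolation identity $(\T^{p_0},\T^{p_1})_{\theta,p}=\T^p$ for tent spaces is known from \cite{cms,long}, a Marcinkiewicz-type interpolation handles the range $2<p<\infty$ once both endpoints are in place. As a cleaner alternative, one can linearize: writing $\cQ(f)=\|\tilde\cQ(f)\|_{\L^2(\d s)}$ with $\tilde\cQ\colon f\mapsto[(s,t,x)\mapsto 1_{s<t}\nabla e^{(t-s)\Delta}f(s,\cdot)(x)]$ a linear $\L^2(\d s)$-valued operator, interpolation of the linear map $\tilde\cQ$ between tent spaces with Hilbert-valued target transfers directly to $\cQ$.

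For the $\T^\infty$ endpoint for $\cQ$, I would mimic the decomposition strategy used at $p=\infty$ in the proof of Theorem~\ref{thm:main1}. Fix a ball $B=B(x_{0},r)$ and decompose $f=\sum_{j\ge 2}f_{j}$ with $f_{j}=f\one_{C_{j}(B)}$ on the Whitney-type shells. Minkowski's inequality in the $\L^2(\d s)$ norm that defines $\cQ$ yields the pointwise bound $\cQ(f)\le\sum_{j\ge 2}\cQ(f_{j})$, so the task reduces to controlling each $\|\cQ(f_{j})\|_{\L^2((0,r^{2})\times B)}$. The local term $j=2$ is handled by $\T^2=\L^2$ boundedness together with the $\T^\infty$ Carleson bound $\|f_{2}\|_{\L^2}^{2}\lesssim r^{n}\|f\|_{\T^\infty}^{2}$.

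For $j\ge 3$, the decisive observation is that on $(0,r^{2})\times B$ the variable $s$ lies in $(0,r^{2})\subset(0,4^{j-1}r^{2})$, so $f_{j}(s,\cdot)$ is spatially supported in $2^{j}B\setminus 2^{j-1}B$, at distance $\sim 2^{j}r$ from $B$. The standard $\L^2$ off-diagonal bound $\|\one_{B}\nabla e^{\tau\Delta}\one_{E}\|_{\L^2\to\L^2}\lesssim\tau^{-1/2}e^{-cd(B,E)^{2}/\tau}$, together with Fubini, reduces the estimate to the elementary integral $\int_{s}^{r^{2}}(t-s)^{-1}e^{-c4^{j}r^{2}/(t-s)}\,\d t\lesssim e^{-c4^{j}}$ (via $v=c4^{j}r^{2}/(t-s)$). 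Combined with the $\T^\infty$ Carleson bound at scale $2^{j}r$, this gives $\|\cQ(f_{j})\|_{\L^2((0,r^{2})\times B)}^{2}\lesssim 2^{jn}e^{-c4^{j}}r^{n}\|f\|_{\T^\infty}^{2}$, which sums geometrically in $j$.

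For $\widetilde\cQ_{[0,T]}$ the argument runs in parallel but uses the off-diagonal bound $\|\one_{B}e^{\tau\Delta}\one_{E}\|_{\L^2\to\L^2}\lesssim e^{-cd(B,E)^{2}/\tau}$, which loses the $\tau^{-1/2}$ gain. The time integral is then merely controlled by $(r^{2}\wedge T)\,e^{-c4^{j}}$, but the prefactor $1/T$ produces the harmless factor $(r^{2}\wedge T)/T\le 1$, so the geometric summability survives. The main obstacle I anticipate is precisely this balance in the $\widetilde\cQ_{[0,T]}$ case: verifying that the Gaussian factor $e^{-c4^{j}}$ can still be extracted from the time integral despite the missing $\tau^{-1/2}$, uniformly in $T$ and $r$. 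Once the $\T^\infty$ endpoint is established, interpolation with the $\T^2$ bound closes out the theorem for all $p\in[2,\infty]$.
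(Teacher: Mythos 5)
Your proposal follows essentially the same route as the paper's proof: the Whitney-type annular decomposition $f=\sum_{j\ge 2}f_{j}$ relative to a fixed ball, the $\T^{2}=\L^{2}$ bound for the local piece, Gaussian $\L^2$-off-diagonal bounds for $\sqrt{\tau}\,\nabla e^{\tau\Delta}$ (equivalently the pointwise kernel bound the paper uses in \eqref{eq:quadmaxinfty}) to get the factor $e^{-c4^{j}}$ for $j\ge 3$, geometric summation via Minkowski, and real interpolation with $\T^{2}$ together with density for $2<p<\infty$. The only cosmetic differences are your alternative linearization route for the interpolation step and a slightly different bookkeeping for $\widetilde\cQ_{[0,T]}$ (you bound the time integral by $(r^{2}\wedge T)e^{-c4^{j}}$ and absorb $1/T$, whereas the paper observes $1/T\le 1/t$ so the integrand is dominated by the one already handled for $\cQ$) -- both are valid and lead to the same conclusion.
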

\begin{proof} 
We argue first for $\cQ$. 
For $p=\infty$, we can follow the proof of Theorem \ref{thm:main1}.
We indicate the required changes. 
We let $f \in \T^{\infty}$
and, using the same notation as in the proof of Theorem \ref{thm:main1}, we define $f_{j}(t,x):=1_{C_{j}(B)}(x)f(t,x)$ for $j=2,...,\infty$, and $B$ a fixed ball of radius $r$. We need to estimate
 $$
 I_{j}:=\int_{0}^{r^2}\int_{B} \int _{0} ^{t} |\nabla e^{(t-s)\Delta}f_{j}(s,.)(x)|^{2} \,\d s\, \d x\, \d t.
$$
For $j=2$, this follows from the $p=2$ case as in the proof of Theorem \ref{thm:main1}.
We now consider $j \geq 3$.
Using Fubini's theorem and a change of variable we have 
$$
I_{j}\le \int_{0}^{r^2}\!\!\int_{B} \int _{0} ^{r^2} |\nabla e^{t\Delta}f_{j}(s,.)(x)|^{2}\, \d t\, \d x\,\d s.$$
Using the explicit formula for the kernel of  $\sqrt t \nabla e^{t\Delta}$, we have  the following  Gaussian decay: 
\begin{equation}
\label{eq:quadmaxinfty}
\int_{B}  | \nabla e^{t\Delta}f_{j}(s,.)(x)|^{2}\, \d x 
\lesssim  t^{-1}{e^{-c\frac{4^{j}r^{2}}{t}}}\|f_{j}(s,.)\|_{\L^2} ^{2}.
\end{equation}
Integrating in $t,s$, we easily have that 
$$
I_{j}\lesssim e^{-c'4^j}   \int_{0}^{r^2}  \|f_{j}(s,.)\|_{\L^{2}} ^{2} \, \d s  \lesssim (4^jr)^{n}
e^{-c'4^{j}} \|f\|_{\T^{\infty}} ^{2},
$$
Summing in $j$ using Minkowski inequality gives that 
$$
\bigg(\int_{0}^{r^2}\int_{B} \int _{0} ^{t} |\nabla e^{(t-s)\Delta}f(s,.)(x)|^{2}\, \d s\, \d x\,\d t\bigg)^{1/2}  \lesssim r^{n/2}\|f\|_{\T^{\infty}}.$$ 
The rest of the proof that yields a proper definition  of $\cQ(f)$ on $\T^{\infty}$ is then the same as in the proof of Theorem \ref{thm:main1}. For $2<p<\infty$, we can proceed by real interpolation and the extension to all of $\T^{p}$ can be achieved by 
 density of $C_{c}^{\infty}(\R_{+}\times \R^{n}) \subset \T^{p}\cap \T^{\infty}$ in $\T^{p}$.

For $\widetilde \cQ_{[0,T]}$, one replaces $\nabla $ by $1/T^{1/2}$ with $t\le T$ and obtains a bound smaller than the one in  the right hand side of \eqref{eq:quadmaxinfty} for the corresponding integral. 
\end{proof}

For $p<2$, we use another approach, based on the following Calder\'on-Zygmund decomposition of tent space functions, discovered by Huang in his PhD thesis (and published in \cite{yi}).

\begin{lem}
\label{lem:cz}
Let $p \in (0,\infty)$. There exists $C>0$ such that, for all $f \in \T^{p}$ and $\lambda>0$, there exist a family of balls $(B_{i})_{i \in \mathbb{N}}$ of $\R^{n}$, and a (almost everywhere) decomposition
$$
f= g + \sum _{i=1} ^{\infty} b_{i},
$$
such that $\text{supp}\, b_{i} \subset \widehat{B_{i}}:= \{(t,y) \in \R_{+} \times \R^{n} \;;\; B(y,\sqrt{t}) \subset B_{i}\}$, and the following estimates hold:
\begin{align}
\|g\|_{\T^{\infty}} &\leq C \lambda, \\
\label{badnorm} \|b_{i}\|_{\T^{p}} &\leq C \lambda |B_{i}|^{\frac{1}{p}} \quad \forall i \in \mathbb{N},\\
\label{badsize} \sum _{i=1} ^{\infty} |B_{i}| &\leq \lambda^{-p} \|f\|_{\T^{p}} ^{p},\\
\label{overlap} \sum _{i=1} ^{\infty} 1_{2B_{i}} &\leq C.
\end{align}
\end{lem}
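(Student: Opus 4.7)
The plan is to mimic the classical Calder\'on--Zygmund decomposition in the tent space setting, using the conical square function
\[
\cS f(x) := \bigg(\int_0^\infty \fint_{B(x,\sqrt t)} |f(t,y)|^2\, \d y\, \d t\bigg)^{1/2},
\]
which satisfies $\|f\|_{\T^p} = \|\cS f\|_{\L^p}$, in place of the Hardy--Littlewood maximal function. Start with the open set $O_\lambda := \{x \in \R^n : \cS f(x) > \lambda\}$, with $|O_\lambda| \le \lambda^{-p}\|f\|_{\T^p}^p$ by Chebyshev, and enlarge it to $O_\lambda^* := \{\cM(\one_{O_\lambda}) > \eta\}$ for a small dimensional constant $\eta \in (0,2^{-n})$, keeping $|O_\lambda^*| \lesssim \lambda^{-p}\|f\|_{\T^p}^p$ via the weak-$(1,1)$ bound on $\cM$. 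A Whitney decomposition of $O_\lambda^*$ then yields balls $B_i = B(x_i,r_i)$ with $r_i \simeq \mathrm{dist}(B_i,(O_\lambda^*)^c)$, $\bigcup_i B_i = O_\lambda^*$, and the bounded overlap $\sum_i \one_{2B_i} \lesssim 1$, delivering \eqref{badsize} and \eqref{overlap}. After enlarging each ball by a fixed constant, one obtains the key geometric property $\widehat{O_\lambda^*} \subset \bigcup_i \widehat{B_i}$, since $(t,y) \in \widehat{O_\lambda^*}$ forces $\sqrt t \le \mathrm{dist}(y,(O_\lambda^*)^c) \lesssim r_i$ whenever $y \in B_i$.

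Next, partition $\widehat{O_\lambda^*}$ as the disjoint union of the sets $E_i := \widehat{B_i}\setminus\bigcup_{j<i}\widehat{B_j}$, and define $b_i := f\one_{E_i}$, $g := f\one_{(\widehat{O_\lambda^*})^c} = f - \sum_i b_i$. For the $\T^\infty$ bound on $g$, fix a box $B_0 = B(x_0,r_0)$: if $(t,y) \in \mathrm{supp}(g) \cap ((0,r_0^2) \times B_0)$, then $B(y,\sqrt t) \not\subset O_\lambda^*$, so some $z \in B(y,\sqrt t) \cap (O_\lambda^*)^c$ satisfies $\cM(\one_{O_\lambda})(z) \le \eta < 2^{-n}$; a volume comparison on $B(z,2\sqrt t) \supset B(y,\sqrt t)$ then yields $|B(y,\sqrt t) \cap O_\lambda^c| \gtrsim |B(y,\sqrt t)|$. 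Inserting this density factor under the integral and swapping the order of integration produces
\[
\int_0^{r_0^2}\!\int_{B_0} |g|^2\, \d y\, \d t \lesssim \int_{2B_0 \cap O_\lambda^c} \cS f(x)^2\, \d x \le \lambda^2 |2B_0|,
\]
so $\|g\|_{\T^\infty} \lesssim \lambda$.

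The main obstacle is establishing \eqref{badnorm}. From $\mathrm{supp}\, b_i \subset \widehat{B_i}$ one verifies that $\cS b_i$ is supported in $B_i$, for any $x$ with some $(t,y)\in\widehat{B_i}$ satisfying $|x-y|<\sqrt t$ automatically lies in $B(y,\sqrt t) \subset B_i$. Thus $\|b_i\|_{\T^p}^p = \int_{B_i} \cS b_i^p\, \d x$, and via H\"older's inequality (for $p<2$; the range $p\ge 2$ follows by interpolating with a straightforward $\T^\infty$-bound on $b_i$) the estimate reduces to the Carleson-type bound
\[
\iint_{\widehat{B_i}} |f(t,y)|^2\, \d y\, \d t \lesssim \lambda^2 |B_i|.
\]
This is the delicate point: unlike the classical decomposition, where $B_i$ has a neighbour on which the function is small, here the Whitney ball $B_i \subset O_\lambda^*$ may lie entirely within $O_\lambda$, so no pointwise bound $\cS f \le \lambda$ is available inside $B_i$ itself. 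One instead uses a Whitney neighbour $x_i^{**} \in (O_\lambda^*)^c$ with $\mathrm{dist}(x_i^{**}, B_i) \simeq r_i$: the inequality $\cM(\one_{O_\lambda})(x_i^{**}) \le \eta$ furnishes a set of measure $\gtrsim |B_i|$ inside a dilate $CB_i$ on which $\cS f \le \lambda$, and a dyadic splitting of $\widehat{B_i}$ in the height variable $t$ (matching the cone scales of points in this set to the corresponding slabs of $\widehat{B_i}$) combined with Fubini recovers the Carleson estimate. Making this multi-scale cone covering argument quantitative is precisely what distinguishes the tent space Calder\'on--Zygmund decomposition from its classical counterpart.
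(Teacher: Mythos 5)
The paper itself does not prove this lemma; it cites Huang's thesis and the published version \cite{yi}, so there is no in-text proof to compare against. Your framework — level set, maximal-function enlargement, Whitney decomposition, tents over Whitney balls, restriction to the layered sets $E_i$, density argument for the $\T^\infty$ bound, Fubini for the weak-type count — is the right skeleton and coincides with Huang's. However, there is a genuine gap at the step you flag as delicate, and the multi-scale cone covering you sketch does not close it.

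The problem is the choice of level set $O_\lambda = \{\cS f > \lambda\}$. With this choice the Carleson bound $\iint_{\widehat{B_i}} |f|^2 \lesssim \lambda^2 |B_i|$ that you reduce \eqref{badnorm} to is simply false, and so is \eqref{badnorm} itself. Take $f = M\one_{[0,\delta^2]\times B(0,\delta)}$ with $M\delta \gg \lambda$. Then $\cS f \eqsim M\delta$ on a $\delta$-neighbourhood of the origin and vanishes outside a $2\delta$-neighbourhood, so $O_\lambda$ and $O_\lambda^*$ are contained in a ball of radius $\eqsim\delta$, and the central Whitney ball $B_0$ has radius $\eqsim\delta$. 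But then $b_0 = f\one_{E_0}$ picks up essentially all of $f$, so $\|b_0\|_{\T^p}^p \gtrsim (M\delta)^p\delta^n$, while $\lambda^p|B_0|\eqsim\lambda^p\delta^n$. The ratio is $(M\delta/\lambda)^p$, which can be made arbitrarily large. No rearrangement of cones emanating from points in $O_\lambda^c$ can recover the low-$t$ portion of $\widehat{B_0}$ deep inside $B_0$, precisely because those small balls $B(y,\sqrt t)$ lie entirely within $O_\lambda$ and never see a point where $\cS f \le \lambda$.

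The fix is to work with a larger level set that builds the stopping-time property directly into the geometry: set $O := \{x : \cM(\cS f^{\,p})(x) > \lambda^p\}$, where $\cM$ is the Hardy--Littlewood maximal operator, note $\cS f^{\,p} \in \L^1$ so $|O| \lesssim \lambda^{-p}\|f\|_{\T^p}^p$ by the weak $(1,1)$ bound, enlarge to $O^* = \{\cM(\one_O) > \eta\}$ with $\eta < 2^{-n}$, and take the Whitney balls of $O^*$. For each such $B_i$, a bounded dilate $CB_i$ meets $(O^*)^c$; the density condition $\cM(\one_O) \le \eta$ there forces $|O^c \cap C'B_i| > 0$, hence some $z' \in O^c$ with $B_i \subset B(z', C''r_i)$, and $\cM(\cS f^p)(z') \le \lambda^p$ gives $\fint_{B_i}\cS f^p \lesssim \lambda^p$ directly. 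Since $\mathrm{supp}\, b_i \subset \widehat{B_i}$ forces $\cS b_i$ to vanish off $B_i$ and $\cS b_i \le \cS f$ pointwise, \eqref{badnorm} follows at once from
\[
\|b_i\|_{\T^p}^p = \int_{B_i} (\cS b_i)^p\, \d x \le \int_{B_i} (\cS f)^p\, \d x \lesssim \lambda^p|B_i|,
\]
with no H\"older step and no Carleson estimate. Your $\T^\infty$ argument for $g$ still goes through verbatim, since a.e.\ $z\in O^c$ satisfies $\cS f(z)\le\lambda$ by Lebesgue differentiation, and the density comparison using $\cM(\one_O)\le\eta$ is exactly what you wrote. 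In short: replace $\{\cS f > \lambda\}$ by $\{\cM(\cS f^p)>\lambda^p\}$ and the proof of \eqref{badnorm} becomes a one-liner; as written, the proposal leaves \eqref{badnorm} unjustified (and, for the reason above, unjustifiable).
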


\begin{thm}
\label{thm:quad2}
The quadratic maximal regularity operators $\cQ$  and $\widetilde \cQ_{[0,T]}$ extend to  bounded sublinear operators from $\T^{p}$ to $\T^{p}$ for all $p \in (1,2]$.
\end{thm}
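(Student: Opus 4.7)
By Marcinkiewicz-type interpolation (identifying $\T^p=\L^p(A)$ via the conical square function $A$ from~\eqref{def:Tp2}, so that $\|\cdot\|_{\T^p}=\|A(\cdot)\|_{\L^p}$), combined with the already established $\T^2$ boundedness of $\cQ$, it suffices to prove the weak-type tent-space estimate
$$
\bigl|\{x\in\R^n:A(\cQ f)(x)>\lambda\}\bigr|\lesssim \lambda^{-p}\|f\|_{\T^p}^p,\qquad f\in\T^p,\ \lambda>0,
$$
for each $p\in(1,2)$; the argument for $\widetilde{\cQ}_{[0,T]}$ is parallel. I would apply the Calder\'on--Zygmund decomposition of Lemma~\ref{lem:cz} at height $\lambda$, writing $f=g+\sum_{i}b_i$ with associated balls $(B_i)$, and split the level set into its good and bad contributions.

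For the good part, I would combine $\|g\|_{\T^\infty}\le C\lambda$ with the bound $\|g\|_{\T^p}\lesssim\|f\|_{\T^p}$ (obtained from $g=f-\sum_ib_i$ and~\eqref{badnorm}--\eqref{overlap}) via the interpolation inequality $\|g\|_{\T^2}^2\lesssim\|g\|_{\T^\infty}^{2-p}\|g\|_{\T^p}^p$ of~\cite{cms}, obtaining $\|g\|_{\T^2}^2\lesssim\lambda^{2-p}\|f\|_{\T^p}^p$; Chebyshev together with the $\T^2$-boundedness of $\cQ$ then closes this part. For the bad part, I would set aside $\bigcup_i 4B_i$, whose measure is $\lesssim\lambda^{-p}\|f\|_{\T^p}^p$ by~\eqref{badsize}, and reduce via sublinearity of $\cQ$ and Chebyshev to a uniform off-diagonal bound of the form
$$
\int_{(\bigcup_j 4B_j)^c}A(\cQ b_i)(x)\,dx\lesssim\lambda\,|B_i|,
$$
after which summing over $i$ using~\eqref{badsize} completes the argument.

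\textbf{The main obstacle} is this last off-diagonal bound. Because the tent-space atoms $b_i$ carry no cancellation, and because the kernel of $\nabla e^{u\Delta}$ has a $u^{-1/2}$ singularity that, once squared and integrated in $s$, defeats the bare Gaussian tail, a direct kernel estimate does not yield decay in $d(x,B_i)/r_i$ summable in $i$. Following the philosophy of Blunck--Kunstmann~\cite{bk}, I would manufacture cancellation via the splitting
$$
b_i=e^{r_i^2\Delta}b_i+(I-e^{r_i^2\Delta})b_i=e^{r_i^2\Delta}b_i-\int_0^{r_i^2}\!\div\bigl(\nabla e^{u\Delta}b_i\bigr)\,du,
$$
with $r_i$ the radius of $B_i$. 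The smoothed piece enjoys Gaussian spatial decay off $B_i$ and is handled by $\L^2$ off-diagonal bounds on $e^{r_i^2\Delta}$. For the complementary divergence-form piece, I would invert $\div$ on suitable mean-zero localizations via Bogovski\u{\i}/Ne\v{c}as and integrate by parts, transferring a derivative from $\nabla e^{(t-s)\Delta}$ onto a primitive of $b_i$; this introduces the extra $(t-s)^{-1/2}$ factor needed to upgrade the Gaussian tail into a summable $(1+d(x,B_i)/r_i)^{-n-\eps}$ decay. The resulting estimate, summed against~\eqref{badsize}, yields the claim, and Marcinkiewicz interpolation then recovers the full range $p\in(1,2]$.
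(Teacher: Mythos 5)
Your overall strategy matches the paper's: Huang's Calderón--Zygmund decomposition at height $\lambda$ (Lemma~\ref{lem:cz}), the $\T^\infty$--$\T^p$ interpolation plus Chebyshev for the good part, and the reduction of the bad part to a measure estimate on $(\bigcup_j 4B_j)^c$. The good-part argument is exactly the paper's. Where you diverge is the cancellation mechanism for the atoms: the paper subtracts the spatial mean, writing $b_i=b_i^\sharp+\widetilde{b_i}$ with $b_i^\sharp(t,\cdot)=1_{B_i}\fint_{B_i}b_i(t,y)\,dy$, and then Bogovski\u{\i}s the mean-zero part $\widetilde{b_i}=\div M_i$ with $M_i(t,\cdot)$ compactly supported in $B_i$; you propose the heat-semigroup splitting $b_i=e^{r_i^2\Delta}b_i+(I-e^{r_i^2\Delta})b_i$.

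There is a genuine gap in your route, concentrated in the treatment of the smoothed piece. Since $e^{r_i^2\Delta}$ conserves mass, $e^{r_i^2\Delta}b_i$ carries the full mean of $b_i$, and a pointwise off-diagonal estimate for $A(\cQ e^{r_i^2\Delta}b_i)(x)$ can only yield decay of order $\lambda\,(r_i/|x-c_i|)^{n}$: the dominant contribution comes from the transition scale $t\sim|x-c_i|^2$, where the Gaussian factor is $O(1)$ and the kernel prefactor $(t-s+r_i^2)^{-(n+1)/2}$ alone governs the size (with $\int_0^{r_i^2}\|b_i(s,\cdot)\|_{\L^1}^2\,ds\lesssim\lambda^2|B_i|^2$, from the averaging-and-Jensen control that the $\T^p$ atom bound provides). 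This is not integrable over $(\bigcup_j 4B_j)^c$, so the estimate $\int_{(\bigcup_j 4B_j)^c}A(\cQ e^{r_i^2\Delta}b_i)\lesssim\lambda|B_i|$ that you assert for this piece fails — smoothing does not remove the slow $|x-c_i|^{-n}$ tail; only genuine cancellation does. This is precisely why the paper does \emph{not} attempt an off-diagonal bound for its mean-carrying piece $b_i^\sharp$: instead $b_i^\sharp$ is compactly supported in $B_i$, the $b_i^\sharp$ have bounded overlap by~\eqref{overlap}, the global bound $\|\sum b_i^\sharp\|_{\T^2}\lesssim\lambda(\sum|B_i|)^{1/2}$ follows from averaging and Jensen, and one concludes by Chebyshev at level $\lambda$. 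If you wish to persist with $e^{r_i^2\Delta}b_i$ you would have to reproduce this global $\T^2$ Chebyshev argument, which further requires an almost-orthogonality lemma because the $e^{r_i^2\Delta}b_i$, unlike the $b_i^\sharp$, are no longer compactly supported.

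A secondary, smaller point: your plan to invert $\div$ via Bogovski\u{\i}/Ne\v{c}as on the complementary piece is redundant as stated. The piece $(I-e^{r_i^2\Delta})b_i=-\int_0^{r_i^2}\div(\nabla e^{u\Delta}b_i)\,du$ is \emph{already} in divergence form, so there is nothing to invert there; what the paper inverts is $\widetilde{b_i}$, which is not a priori a divergence but has spatial mean zero, which is exactly the hypothesis that makes Ne\v{c}as/Bogovski\u{\i} applicable on $B_i$. Your divergence-form piece, once correctly identified, does have the extra $(t-s)^{-1/2}$ decay you want (indeed more, since $\nabla\Delta e^{v\Delta}$ carries $v^{-1}$ relative to $\nabla e^{v\Delta}$), but now the ``primitive'' $\int_0^{r_i^2}\nabla e^{u\Delta}b_i\,du$ is not compactly supported, so the regional case analysis ($R_1,\dots,R_4$ in the paper) would need to be adapted to handle Gaussian tails rather than a hard support cutoff.
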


\begin{proof} We argue with $\cQ$. The proof for $\widetilde \cQ_{[0,T]}$ uses the  same strategy and is left to the reader, taking into account the remarks at the end of the proof of Theorem \ref{thm:quad1}.
 
Let $p \in (1,2)$ and $f \in \T^{p}\cap \T^{2}$. Setting $$\mathcal{A}(f):x \mapsto \left( \int _{0} ^{\infty} \fint _{B(x,\sqrt{t})} |f(t,y)|^{2} \, \d y\, \d t \right)^{\frac{1}{2}}$$ so that $\|f\|_{\T^{p}}=\|\mathcal{A}(f)\|_{\L^{p}}$, it is enough, by real interpolation and 
 density of $\T^{p}\cap \T^{2}$ in $\T^{p}$, 
to prove the weak type estimate
$$
|\{\mathcal{A}(\cQ(f))>3\lambda\}| \lesssim \lambda ^{-p} \|f\|_{\T^{p}}^p, \quad \forall \lambda>0.
$$
We do this    using the Calder\'on-Zygmund decomposition $f = g + \sum _{i=1} ^{\infty} b_{i}$ at height $\lambda$ provided by Lemma \ref{lem:cz}. For the good part $g$, we have that 
\begin{align*}
|\{\mathcal{A}(\cQ(g))>\lambda\}| &\leq \lambda^{-2} \|\cQ(g)\|_{\T^{2}} ^{2}
\lesssim \lambda^{-2} \|g\|_{\T^{2}} ^{2} \lesssim \lambda^{-2} \|g\|_{\T^{\infty}}^{2-p} \|g\|_{\T^{p}}^{p} \\
& \lesssim \lambda^{-p} (\|f\|_{\T^{p}}^{p} + \sum _{i=1} ^{\infty} \|b_{i}\|_{\T^{p}}^{p}) \\
& \lesssim \lambda^{-p} \|f\|_{\T^{p}}^{p} + \sum _{i=1} ^{\infty} |B_{i}| \lesssim \lambda^{-p} \|f\|_{\T^{p}}^{p},
\end{align*}
using the properties of $g$ and $b_{i}$ in Lemma \ref{lem:cz}.
Since the bad components $b_{i}$ do not necessarily have mean $0$, we introduce
$$
b_{i} ^{\sharp}:(t,x) \mapsto 1_{B_{i}}(x) \fint _{B_{i}} b_{i}(t,y)\, \d y,
$$
and let $\widetilde{b}_{i}:= b_{i} - b_{i} ^{\sharp}$, for all $i \in \mathbb{N}$. 
Using \eqref{overlap} first, and \eqref{badsize} second, we have that
\begin{align*}
|\{\mathcal{A}(\cQ(\sum _{i=1} ^{\infty} b_{i}^{\sharp}))>\lambda\}| &\leq \lambda^{-2} \|\cQ(\sum _{i=1} ^{\infty} b_{i}^{\sharp})\|_{T^{2}} ^{2} 
 \\
& \lesssim  \lambda^{-2} \|\sum _{i=1} ^{\infty} b_{i}^{\sharp}\|_{\L^2(\R_{+};  \L^2(\R^n))} ^{2} 
\\
& \lesssim \lambda^{-2} \sum _{i=1} ^{\infty} 
|B_{i}| \bigg(\int _{0} ^{\infty} \bigg|\fint _{B_{i}} b_{i}(t,y)\,\d y\bigg|^{2} \d t\bigg) \\
&\lesssim \lambda^{-2} \underset{i \in \mathbb{N}}{\sup} \bigg(\int _{0} ^{\infty} \bigg|\fint _{B_{i}} b_{i}(t,y)\,\d y\bigg|^{2} \d t\bigg) \lambda^{-p} \|f\|_{\T^{p}}^{p}.
\end{align*}
For $i \in \mathbb{N}$, we let $r_{i}$ denote the radius of $B_{i}$, and note that $b_{i}(t,.)=0$ for all $t>r_{i}^{2}$ and  is supported in $\overline{B_{i}}$ otherwise  since $\text{supp}\, b_{i} \subset \widehat{B_{i}}$.
Using Fubini's theorem, Minkowski inequality, Jensen's inequality, and \eqref{badnorm}, this implies that
\begin{align*}
& \bigg(\int _{0} ^{\infty} \bigg|\fint _{B_{i}} b_{i}(t,y)\, \d y\bigg|^{2} \d t\bigg)^{\frac{1}{2}} = \bigg(\int _{0} ^{r_{i}^{2}} \bigg|\fint _{B_{i}} 
\bigg(\fint _{B(y,\sqrt{t})}\,  \d z\bigg)\,
b_{i}(t,y)\,  \d y\bigg|^{2} \d t \bigg)^{\frac{1}{2}}
\\
&\quad \lesssim \bigg(\int _{0} ^{r_{i}^{2}} \bigg|\fint _{2B_{i}} 
\fint _{B(z,\sqrt{t})}
|b_{i}(t,y)|\, \d y\, \d z\bigg|^{2} \d t \bigg)^{\frac{1}{2}} \\ &\quad \leq \fint _{2B_{i}}\bigg(  \int _{0} ^{r_{i}^{2}}  
\bigg|\fint _{B(z,\sqrt{t})} |b_{i}(t,y)|\, \d y\bigg|^2\,   \d t \bigg)^{\frac{1}{2}} \, \d z
\\
&
\quad  \leq \fint _{2B_{i}} \left(  \int _{0} ^{\infty}  
\fint _{B(z,\sqrt{t})} |b_{i}(t,y)|^{2} \, \d y\,  \d t \right)^{\frac{1}{2}}\, \d z \\
&  \quad \leq \left( \fint _{2B_{i}} \mathcal{A}(b_{i})(z)^{p}\,  \d z \right)^{\frac{1}{p}}
\lesssim |B_{i}|^{-\frac{1}{p}} \|b_{i}\|_{\T^{p}} \lesssim \lambda.
\end{align*}
Combining this with our previous estimate, we have that
$$
|\{\mathcal{A}(\cQ(\sum _{i=1} ^{\infty} b_{i}^{\sharp}))>\lambda\}| \lesssim \lambda^{-p} \|f\|_{\T^{p}}^{p}.
$$

It remains to consider the term with the $ \widetilde{b_{i}}$, which carries the main difficulty.  Remark that as $f$, $g$ and $\sum b_{i}^{\sharp}$ belong to $\T^{2}$, the same holds for 
$\sum \widetilde{b_{i}}$, so $\cQ$ acts properly on it.
Using \eqref{badsize} again, we only have to prove the estimate
\begin{equation}
\label{quadmainest}
|\{x \in O
\;;\; \mathcal{A}(\cQ(\sum_{i=1} ^{\infty} \widetilde{b_{i}}))(x)>\lambda\}| \lesssim \lambda^{-p} \|f\|_{\T^{p}}^{p}.
\end{equation}
where $O=\R^{n} \backslash \cup_{i \in \mathbb{N}}4B_{i}$. 

Let us set
$$
I(x):= \mathcal{A}(
\cQ(\sum _{i=1} ^{\infty} \widetilde{b_{i}}))(x) \quad \forall x 
\in O.
$$
We will prove below the following pointwise estimate, where $c_{i}$ denotes the centre of $B_{i}$,
\begin{equation}
\label{I(x)est}
I(x) \lesssim \lambda \sum _{i=1} ^{\infty} \frac{r_{i}^{n+1}}{|x-c_{i}|^{n+1}}  \quad \forall x  \in O.
\end{equation}
Assuming \eqref{I(x)est}, the proof of  \eqref{quadmainest} can be concluded as follows
\begin{align*}
|\{x \in O
\;;\; I(x) > \lambda\}| 
\lesssim \sum _{i=1} ^{\infty} \int _{O
}  \frac{r_{i}^{n+1}}{|x-c_{i}|^{n+1}} \, \d x \lesssim \sum _{i=1} ^{\infty} r_{i}^{n} \lesssim \frac{ \|f\|_{\T^{p}}^{p}}{\lambda^{p}},
\end{align*}
using Markov's inequality first,  and  \eqref{badsize} in the last inequality.
To prove \eqref{I(x)est}, we exploit the fact that $\widetilde{b_{i}}(t,.)$ has  support in $\overline{B_{i}}$ with   $\fint _{B_{i}} \widetilde{b_{i}}(t,x)\, \d x = 0$ for all $t>0$. Using duality from \cite[Chapter 3, Lemma 7.1]{necas},  or the direct proof from \cite{bog1, bog2}, 
 we have that, for all $i \in \mathbb{N}$ and $t>0$, there exists  $M_{i}(t,.) \in \W^{1,p}(\R^n)$ with support in $\overline{B_{i}}$ (hence   $M_{i}(t,.) \in \W_{0}^{1,p}(B_{i})$)  such that 
\begin{align*}
\widetilde{b_{i}}(t,.) &= \div  M_{i}(t,.)  \quad \& \quad 
\|\nabla M_{i}(t,.)\|_{\L^{p}}  \lesssim \|\widetilde{b_{i}}(t,.)\|_{\L^{p}},
\end{align*}
with implicit constants independent of $t$ and $i$ (remark that the representation in the direct proof explicitly furnishes joint measurability in the $(t,x)$ variables from that of the $\tilde b_{i}$).
The proof of \eqref{I(x)est} is done
in two steps. In the first step, we establish
\begin{equation}
\label{I(x)est1}
I(x) \lesssim  \sum _{i=1} ^{\infty} \frac{r_{i}}{|x-c_{i}|^{n+1}} \bigg(\int _{0} ^{\infty} r_{i}^{-2}\|M_{i}(s,.)\|_{\L^{1}}^{2}\, \d s\bigg)^{\frac{1}{2}} \quad \forall x 
\in O.
\end{equation}
In the second, we show 
\begin{equation}
\label{I(x)est2}
\bigg(\int _{0} ^{\infty} r_{i}^{-2}\|M_{i}(s,.)\|_{\L^{1}}^{2}\, \d s\bigg)^{\frac{1}{2}} \lesssim \lambda |B_{i}| \quad \forall i \in \mathbb{N}.
\end{equation}
Let us start with \eqref{I(x)est1}. Using the kernel estimates for $\nabla e^{(t-s) \Delta}\div$, we have for that, some numerical constant $c>0$,
\begin{align*}
I(x) & \leq \sum _{i=1} ^{\infty} \left( \int _{0} ^{\infty} \fint _{B(x,\sqrt{t})} \int _{0} ^{t}   
 |\nabla e^{(t-s) \Delta}\div M_{i}(s,.)(y)|^{2} \, \d y\, \d s\, \d t \right)^{\frac{1}{2}} \\
&\lesssim \sum _{i=1} ^{\infty} \left( \int _{0} ^{r_{i}^{2}} \int _{s} ^{\infty}   \fint _{B(x,\sqrt{t})} 
\bigg(\int _{B_{i}} \frac{e^{-c\frac{|y-z|^{2}}{t-s}}}{(t-s)^{\frac{n}{2}+1}} |M_{i}(s,z)| \d z\bigg)^{2}
\d y\, \d t\, \d s \right)^{\frac{1}{2}} \\
& =: \sum _{i=1} ^{\infty} (N_{i}(x) + G_{i}(x)),
\end{align*}
where, for $i \in \mathbb{N}$ and $x\in O,$
$$
N_{i}(x):=
\left( \int _{0} ^{r_{i}^{2}} \int _{s} ^{\infty} \fint _{B(x,\sqrt{t})}    
1_{2B_{i}}(y)
\bigg(\int _{B_{i}} \frac{e^{-c\frac{|y-z|^{2}}{t-s}}}{(t-s)^{\frac{n}{2}+1}} |M_{i}(s,z)|\, \d z\bigg)^{2}
\d y\, \d t\, \d s \right)^{\frac{1}{2}}. 
$$
and 
$$
G_{i}(x):=
\left( \int _{0} ^{r_{i}^{2}} \int _{s} ^{\infty} \fint _{B(x,\sqrt{t})}   
1_{(2B_{i})^c}(y)
\bigg(\int _{B_{i}} \frac{e^{-c\frac{|y-z|^{2}}{t-s}}}{(t-s)^{\frac{n}{2}+1}} |M_{i}(s,z)| \,\d z\bigg)^{2}
\d y\, \d t\, \d s \right)^{\frac{1}{2}}. 
$$
We treat $N_{i}$ first. For $y \in 2B_{i}$, $x \not \in 4B_{i}$, $s\leq r_{i}^{2}$, and $|x-y|^{2} \leq t$,
we have that $t \geq \frac{|x-c_{i}|^{2}}{4} \geq 4r_{i}^{2}$ and that
$t-s \geq \frac{3}{4}{t}$.
We thus have
\begin{align*}
N_{i}(x) &\leq \left( \int _{0} ^{r_{i}^{2}} \int _{\frac{|x-c_{i}|^{2}}{4}} ^{\infty}  \fint _{B(x,\sqrt{t})}   
1_{2B_{i}}(y)
(t-s)^{-(n+2)}\|M_{i}(s,.)\|_{\L^{1}} ^{2} \, 
\d y\, \d t\, \d s \right)^{\frac{1}{2}} \\
&\lesssim r_{i}^{n/2}\left( \int _{0} ^{r_{i}^{2}} \int _{\frac{|x-c_{i}|^{2}}{4}} ^{\infty}  
t^{-(3n/2+2)}\|M_{i}(s,.)\|_{\L^{1}} ^{2} \,
\d t\,  \d s \right)^{\frac{1}{2}} \\
&\lesssim \frac{r_{i}^{n/2}}{|x-c_{i}|^{3n/2+1}} \left( \int _{0} ^{r_{i}^{2}}\|M_{i}(s,.)\|_{\L^{1}} ^{2}\, \d s \right)^{\frac{1}{2}} \\
&\lesssim \frac{1}{|x-c_{i}|^{n+1}} \left( \int _{0} ^{r_{i}^{2}}\|M_{i}(s,.)\|_{\L^{1}} ^{2}\,  \d s \right)^{\frac{1}{2}} = \frac{r_{i}}{|x-c_{i}|^{n+1}} \left( \int _{0} ^{\infty} r_{i}^{-2}\|M_{i}(s,.)\|_{\L^{1}} ^{2} \, \d s \right)^{\frac{1}{2}}.
\end{align*}
We now estimate $G_{i}$. We note that, for $y \not \in 2B_{i}$ and $z \in B_{i}$, we have that $|y-z| \geq \frac{|y-c_{i}|}{2}$. We thus have that
\begin{align*}
G_{i}(x) &:= \left( \int _{0} ^{r_{i}^{2}} \int _{s} ^{\infty} \fint _{B(x,\sqrt{t})}    
1_{(2B_{i})^{c}}(y)
\bigg(\int _{B_{i}} \frac{e^{-c\frac{|y-z|^{2}}{t-s}}}{(t-s)^{\frac{n}{2}+1}} |M_{i}(s,z)|\, \d z\bigg)^{2}
\d y\, \d t\, \d s \right)^{\frac{1}{2}} \\
 & \leq  \left( \int _{0} ^{r_{i}^{2}} \int _{s} ^{\infty} \fint _{B(x,\sqrt{t})}   
1_{(2B_{i})^{c}}(y)
\bigg(\int _{B_{i}} \frac{e^{-\frac{c}{4}\frac{|y-c_{i}|^{2}}{t-s}}}{(t-s)^{\frac{n}{2}+1}} |M_{i}(s,z)|\,  \d z\bigg)^{2}
\d y\, \d t\, \d s \right)^{\frac{1}{2}} \\
& = \left( \int _{0} ^{r_{i}^{2}} \int _{s} ^{\infty} \fint _{B(x,\sqrt{t})}   
\frac{e^{-\frac{c}{2}\frac{|y-c_{i}|^{2}}{t-s}}}{(t-s)^{n+2}} \|M_{i}(s,.)\|_{\L^{1}}^{2}\,
\d y\, \d t\, \d s \right)^{\frac{1}{2}}=\colon H_{i}(x).
\end{align*}
To estimate this integral, we split further the range of integration  according to  the following 4 subregions
\begin{align*}
R_{1}&:= \{(t,s,y) \in \R_{+}\times \R_{+} \times \R^{n} \;;\; t \geq 4r_{i}{^2}, \;  r_{i} \geq |x-y| \},\\
R_{2}&:= \{(t,s,y) \in \R_{+}\times \R_{+} \times \R^{n} \;;\; t \geq 4r_{i}{^2}, \;  r_{i} \leq |x-y| , \; |y-c_{i}| \leq |x-y|\}, \\
R_{3}&:= \{(t,s,y) \in \R_{+}\times \R_{+} \times \R^{n} \;;\; t \geq 4r_{i}{^2}, \;  r_{i} \leq |x-y|, \; |y-c_{i}| \geq |x-y|\}, \\
R_{4}&:= \{(t,s,y) \in \R_{+}\times \R_{+} \times \R^{n} \;;\; t \leq 4r_{i}{^2}\}.
\end{align*}
 In $R_{1}$, we have $t-s\ge \frac 3 4 t$ and $|y-c_{i}| \eqsim |x-c_{i}|$ as  $x\notin 4B_{i}$ and $y\in B(x,r_{i})$, hence
\begin{align*}
 &\left( \int _{0} ^{r_{i}^{2}} \int _{s} ^{\infty} \fint _{B(x,\sqrt{t})}   1_{R_{1}}(t,s,y)
\frac{e^{-\frac{c}{2}\frac{|y-c_{i}|^{2}}{t-s}}}{(t-s)^{n+2}} \|M_{i}(s,.)\|_{\L^{1}}^{2}\,
\d y\, \d t\,  \d s \right)^{\frac{1}{2}} \\
& \leq \left( \int _{0} ^{r_{i}^{2}} \bigg(\int _{\R^{n}} 1_{B(x,r_{i})}(y)  
\int _{4r_{i}^{2}} ^{\infty} \frac {
e^{-\frac{2c}{3}\frac{|y-c_{i}|^{2}}{t}}}{t^{3n/2+1}}\, \frac{\d t}{t}\, \d y\bigg) \|M_{i}(s,.)\|_{\L^{1}}^{2}\, \d s \right)^{\frac{1}{2}} \\
& \lesssim \left( \int _{0} ^{r_{i}^{2}} \bigg(\int _{B(x,r_{i})}   
|y-c_{i}|^{-(3n+2)}\, \d y\bigg) \|M_{i}(s,.)\|_{\L^{1}}^{2}\, \d s \right)^{\frac{1}{2}} \\
& \lesssim \frac{r_{i}^{n/2}}{ |x-c_{i}|^{3n/2+1}} \left( \int _{0} ^{r_{i}^{2}}\|M_{i}(s,.)\|_{\L^{1}}^{2}\, \d s \right)^{\frac{1}{2}} \leq \frac{r_{i}}{|x-c_{i}|^{n+1}} \left( \int _{0} ^{\infty} r_{i}^{-2}\|M_{i}(s,.)\|_{\L^{1}} ^{2}\, \d s \right)^{\frac{1}{2}}.
\end{align*}

In $R_{2}$, we use again $t-s\ge \frac 3 4 t$ and  obtain\begin{align*}
 &\left( \int _{0} ^{r_{i}^{2}} \int _{s} ^{\infty} \fint _{B(x,\sqrt{t})}   1_{R_{2}}(t,s,y)
\frac{e^{-\frac{c}{2}\frac{|y-c_{i}|^{2}}{t-s}}}{(t-s)^{n+2}} \|M_{i}(s,.)\|_{\L^{1}}^{2}\,
\d y\, \d t\, \d s \right)^{\frac{1}{2}} \\
 &\lesssim \left( \int _{0} ^{r_{i}^{2}} \int _{\R^{n}}   \int _{|x-y|^{2}} ^{\infty} t^{-n/2} 1_{R_{2}}(t,s,y)
\frac{e^{-\frac{2c}{3}\frac{|y-c_{i}|^{2}}{t}}}{t^{n+2}} \|M_{i}(s,.)\|_{\L^{1}}^{2}\,
\d t\, \d y\, \d s \right)^{\frac{1}{2}}\\
 &\lesssim \left( \int _{0} ^{r_{i}^{2}} \int _{\R^n}   \int _{|x-y|^{2}} ^{\infty} 1_{R_{2}}(t,s,y)
 \frac{e^{-\frac{2c}{3}\frac{|y-c_{i}|^{2}}{t}}}{t^{3n/2+2}} \|M_{i}(s,.)\|_{\L^{1}}^{2}\,
\d t\, \d y\, \d s \right)^{\frac{1}{2}} \\
 &\lesssim \left( \int _{0} ^{r_{i}^{2}} \bigg(\int _{\R^n} 1_{|y-x|\geq |y-c_{i}|}(y)
  |y-c_{i}|^{-(3n+2)} \bigg(\int _{0} ^{\frac{|y-c_{i}|^{2}}{|x-y|^{2}}} u^{3n/2+1}\frac{\d u}{u}\bigg) \d y\bigg)
 \|M_{i}(s,.)\|_{\L^{1}}^{2}\,
 \d s \right)^{\frac{1}{2}} \\
 &\lesssim \left( \int _{0} ^{r_{i}^{2}} \bigg( \int _{\R^n}1_{|y-x|\geq \frac{|x-c_{i}|}{2}}(y)|x-y|^{-(3n+2)}\d y \bigg)
 \|M_{i}(s,.)\|_{\L^{1}}^{2}\,
 \d s \right)^{\frac{1}{2}} \\
  &\lesssim \left( \int _{0} ^{r_{i}^{2}} |x-c_{i}|^{-(2n+2)} \|M_{i}(s,.)\|_{\L^{1}}^{2}\,
 \d s \right)^{\frac{1}{2}}\leq \frac{r_{i}}{|x-c_{i}|^{n+1}} \left( \int _{0} ^{\infty} r_{i}^{-2}\|M_{i}(s,.)\|_{\L^{1}} ^{2}\,  \d s \right)^{\frac{1}{2}}.
\end{align*}
The reasoning in $R_{3}$ is similar using $t-s\ge \frac 3 4 t$ and we have 
\begin{align*}
 &\left( \int _{0} ^{r_{i}^{2}} \int _{s} ^{\infty} \fint _{B(x,\sqrt{t})}   1_{R_{3}}(t,s,y)
\frac{e^{-\frac{c}{2}\frac{|y-c_{i}|^{2}}{t-s}}}{(t-s)^{n+2}} \|M_{i}(s,.)\|_{\L^{1}}^{2}\,
\d y \,\d t\, \d s \right)^{\frac{1}{2}} \\
 &\lesssim \left( \int _{0} ^{r_{i}^{2}} \int _{\R^{n}}   \int _{|x-y|^{2}} ^{\infty} t^{-n/2} 1_{R_{3}}(t,s,y)
\frac{e^{-\frac{2c}{3}\frac{|y-c_{i}|^{2}}{t}}}{t^{n+2}} \|M_{i}(s,.)\|_{\L^{1}}^{2}\,
\d t\, \d y\, \d s \right)^{\frac{1}{2}}\\
 &\lesssim \left( \int _{0} ^{r_{i}^{2}} \int _{\R^{n}}   \int _{0} ^{\infty} 1_{ 
 |y-x|\leq |y-c_{i}|}(y)
 \frac{e^{-\frac{2c}{3}\frac{|y-c_{i}|^{2}}{t}}}{t^{3n/2+2}} \|M_{i}(s,.)\|_{\L^{1}}^{2}\,
\d t\, \d y\, \d s \right)^{\frac{1}{2}} \\
 &\lesssim \left( \int _{0} ^{r_{i}^{2}} \int _{\R^n} 1_{ 
 |y-x|\leq |y-c_{i}|}(y) |y-c_{i}|^{-(3n+2)} 
 \|M_{i}(s,.)\|_{\L^{1}}^{2}\,
\d y\, \d s \right)^{\frac{1}{2}} \\
 &\lesssim \left( \int _{0} ^{r_{i}^{2}} \bigg( \int _{\R^n}1_{|y-c_{i}|\geq\frac{|x-c_{i}|}{2}}(y)|y-c_{i}|^{-(3n+2)}\d y \bigg)
 \|M_{i}(s,.)\|_{\L^{1}}^{2}\,
 \d s \right)^{\frac{1}{2}} \\
  &\lesssim \left( \int _{0} ^{r_{i}^{2}} |x-c_{i}|^{-(2n+2)} \|M_{i}(s,.)\|_{\L^{1}}^{2}\,
 \d s \right)^{\frac{1}{2}}\leq \frac{r_{i}}{|x-c_{i}|^{n+1}} \left( \int _{0} ^{\infty} r_{i}^{-2}\|M_{i}(s,.)\|_{\L^{1}} ^{2}\,  \d s \right)^{\frac{1}{2}}.
\end{align*}
To complete the proof of \eqref{I(x)est1}, it just remains to estimate $H_{i}$ in the subregion $R_{4}$. This time we use the trivial estimate $t\ge t-s$, and   $|y-c_{i}| \eqsim |x-{c_{i}}|$ when $x\notin 4B_{i}$ and $|y-x|^2 \le  t \le 4r_{i}^2$ to obtain
\begin{align*}
&\left( \int _{0} ^{r_{i}^{2}} \int _{s} ^{\infty} \fint _{B(x,\sqrt{t})}   1_{R_{4}}(t,s,y)
\frac{e^{-\frac{c}{2}\frac{|y-c_{i}|^{2}}{t-s}}}{(t-s)^{n+2}} \|M_{i}(s,.)\|_{\L^{1}}^{2}\,
\d y\, \d t \, \d s \right)^{\frac{1}{2}} \\
& =  \left( \int _{0} ^{r_{i}^{2}} \int _{s} ^{4r_{i}^{2}} \fint _{B(x,\sqrt{t})}   1_{R_{4}}(t,s,y)
\frac{e^{-\frac{c}{2}\frac{|y-c_{i}|^{2}}{t-s}}}{(t-s)^{n+2}} \|M_{i}(s,.)\|_{\L^{1}}^{2}\,
\d y \,\d t\, \d s \right)^{\frac{1}{2}} \\
& \lesssim \left( \int _{0} ^{r_{i}^{2}} \int_{\R^n}  \int _{\max(s,|x-y|^{2})} ^{4r_{i}^{2}} (t-s)^{-n/2} 1_{|x-y|^{2}\leq 4r_{i}^{2}}(y)
\frac{e^{-\frac{c}{2}\frac{|y-c_{i}|^{2}}{t-s}}}{(t-s)^{n+2}} \|M_{i}(s,.)\|_{\L^{1}}^{2}\,
\d t \,\d y \,\d s \right)^{\frac{1}{2}} \\
& \lesssim \left( \int _{0} ^{r_{i}^{2}} \bigg( \int_{B(x,2r_{i})}  
\bigg( \int _{0} ^{\infty}  \frac{e^{-\frac{c}{2}\frac{|y-c_{i}|^{2}}{t}}}{t^{3n/2+1}} \frac{\d t}{t} \bigg) \d y \bigg)\|M_{i}(s,.)\|_{\L^{1}}^{2}\,
 \d s \right)^{\frac{1}{2}} \\
 & \lesssim \left( \int _{0} ^{r_{i}^{2}} \bigg( \int_{B(x,2r_{i})}  
|y-c_{i}|^{-(3n+2)} \d y \bigg)\|M_{i}(s,.)\|_{\L^{1}}^{2}\,
 \d s \right)^{\frac{1}{2}} \\
  &\lesssim \frac{r_{i}^{n/2}}{|x-c_{i}|^{3n/2+1}} \left( \int _{0} ^{r_{i}^{2}}  \|M_{i}(s,.)\|_{\L^{1}}^{2}\,
 \d s \right)^{\frac{1}{2}}\leq \frac{r_{i}}{|x-c_{i}|^{n+1}} \left( \int _{0} ^{\infty} r_{i}^{-2}\|M_{i}(s,.)\|_{\L^{1}}^{2}\, \d s \right)^{\frac{1}{2}}.
\end{align*}
Now that \eqref{I(x)est1} is proven, we prove the final estimate \eqref{I(x)est2}.
For $i \in \mathbb{N}$ and $s>0$, we first apply Poincar\'e's inequality and the construction of $M_{i}$ to note that
\begin{align*}
r_{i}^{-1}\fint _{B_{i}} |M_{i}(s,y)|\,  \d y 
&\lesssim \fint _{B_{i}} |\nabla M_{i}(s,y)|\, \d y \leq  \bigg(\fint _{B_{i}} |\nabla M_{i}(s,y)|^{p} \, \d y\bigg)^{\frac{1}{p}} \\
&\lesssim \bigg(\fint _{B_{i}} |\widetilde{b}_{i}(s,y)|^{p} \,\d y\bigg)^{\frac{1}{p}} \lesssim \bigg(\fint _{B_{i}} |b_{i}(s,y)|^{p}\, \d y\bigg)^{\frac{1}{p}}.
\end{align*}
Using this fact, we obtain
\begin{align*}
&\left(\int _{0} ^{r_{i}^{2}} r_{i}^{-2}\|M_{i}(s,.)\|_{\L^{1}}^{2}\, \d s\right)^{\frac{1}{2}} \lesssim
|B_{i}| \left(\int _{0} ^{r_{i}^{2}}\bigg(r_{i}^{-1} \fint _{B_{i}} |M_{i}(s,y)| \d y\bigg)^{2} \d s\right)^{\frac{1}{2}} 
\\
&\quad \lesssim |B_{i}| \left(\int _{0} ^{r_{i}^{2}} \bigg(\fint _{B_{i}} |b_{i}(s,y)|^{p}\, \d y\bigg )^{\frac{2}{p}}\, \d s\right)^{\frac{1}{2}} 
\\
&
\quad
\lesssim |B_{i}| \left(\int _{0} ^{r_{i}^{2}} \bigg(\fint _{2B_{i}} \fint _{B(z,\sqrt{s})} |b_{i}(s,y)|^{p}\, \d y\,\d z\bigg )^{\frac{2}{p}} \d s\right)^{\frac{1}{2}} 
\\
& \quad = |B_{i}| \left[ \left(\int _{0} ^{r_{i}^{2}} \bigg(\fint _{2B_{i}} \fint _{B(z,\sqrt{s})} |b_{i}(s,y)|^{p}\, \d y\, \d z\bigg )^{\frac{2}{p}} \d s \right)^{\frac{p}{2}}\, \right]^{\frac{1}{p}} \\
& \quad \leq |B_{i}| \left[ \fint _{2B_{i}} \left(\int _{0} ^{r_{i}^{2}} \bigg( \fint _{B(z,\sqrt{s})} |b_{i}(s,y)|^{p}\, \d y\bigg )^{\frac{2}{p}}\, \d s \right)^{\frac{p}{2}}\, \d z \right]^{\frac{1}{p}} 
\\
&
\quad
\lesssim |B_{i}| \left[ \fint _{2B_{i}} \mathcal{A}(b_{i})(z)^{p}\, \d z \right]^{\frac{1}{p}} \\
& \quad \lesssim |B_{i}|^{1-1/p} \|b_{i}\|_{\T^{p}} \lesssim \lambda |B_{i}|,
\end{align*}
using \eqref{badnorm} in the last inequality to conclude the proof of \eqref{I(x)est2}.
\end{proof}

\section{Existence, uniqueness, and regularity of pathwise weak solutions}
\label{sec:stoch}
In order to obtain our well-posedness result, we set  Problem  \eqref{eq:SPDE} mentioned in the Introduction, which we recall here for convenience. Consider 
\begin{equation*}
\left\{
  \begin{array}{ll}
   \d U(t) - \div a(t,.) \nabla U(t) \, \d t & = \div F(t) \, \d t +  g(t)\, \d W_{\H}(t),  \quad t\in \R_{+}\\
    U(0) & = \psi,
  \end{array}
\right.
\end{equation*}
driven by a cylindrical Brownian motion $W_{\H}$ on a complex separable Hilbert space $\H$, with $a \in \L^{\infty}(\Omega \times\R_{+}; \L^{\infty}(\R^{n}; \mathcal{L}(\R^{n})))$ (non necessarily symmetric) satisfying the uniform ellipticity condition:  for some $C>0$, 
\begin{equation}
\label{eq:uniformellipticity}
 a(\omega, t,x) \xi \cdot\xi  \geq C |\xi|^{2} \quad \forall \xi \in \R^{n},
\end{equation}
for almost every $(\omega, t,x) \in \Omega \times \R_{+} \times \R^{n}$. The space of randomness $\Omega$ on which we realise the Brownian motion is equipped with a probability measure, which is implicit in the sequel. On the other hand, the measures on $\R_{+}$ and $\R^n$ are Lebesgue measures.

  We denote by $(\mathcal{F}_{t})_{t \geq 0}$ the filtration associated with our cylindrical Brownian motion, and require $g$ to be (locally in space-time) a limit of simple  processes adapted to this filtration (defined next).  We do not require $a$ or $F$ to be adapted.

\begin{defn}
\label{def:sap}
We call  $g:  \Omega  \times \R_{+} \times \R^{n}  \to \H$ a {\em simple adapted process} if it is of the form 
\begin{equation}
\label{eq:sap}
g(\omega, t,x) = \sum _{\ell= 0  } ^{N}   1_{(t_{\ell},t_{\ell+1}]}(t)
\sum _{m=1} ^{M} \one_{A_{m,\ell}}(\omega) \sum _{k=1} ^{K} \phi_{k,m,\ell}(x)\,   h_{k}, 
\end{equation}
with $N,M,K \in \mathbb{N}$, $0=t_{0}\leq t_{1}<...<t_{N+1}$, $A_{m,\ell} \in \mathcal{F}_{t_{\ell}}$, $\phi_{k,m,\ell}  \in \C_{c} ^{\infty}(\R^{n})$,  and $h_{k} \in \H$. \end{defn}

For a simple adapted process  and  $t>0$,  the stochastic integral
$\int _{0} ^{t} g(s,.)\, \d W_{\H}(s)$   is then understood as in the standard theory for $\L^{2}$-valued processes (see \textit{e.g.,} \cite{rl} or \cite{nvw}). More precisely, for $g$ as in \eqref{eq:sap}, $t>0$  and $x \in \R^{n}$, 
$$
\bigg(\int _{0} ^{ t  } g(s,.)\, \d W_{\H}(s)\bigg)(x) := 
\sum^{ N}_{\ell= 0 }\sum _{m=1} ^{M} \one_{A_{m,\ell}} \sum _{k=1} ^{K} \phi_{k,m,\ell}(x)  \big(W_{\H}( t\wedge  t_{\ell+1})\,h_{k} 
- W_{\H}( t\wedge   t_{\ell})\,h_{k}\big).
$$

\subsection{Pathwise weak solutions}

\begin{defn}
\label {def:pathweak}
Let $\psi\in \L^1(\Omega; \L^1_{\loc}(\R^n))$ and $F\in \L^1(\Omega; \L^1_{\loc}([0,+\infty); \L^{1}_{\loc}(\R^n)))$. Let  $ g \in \L^1(\Omega; \L^1_{\loc}([0,+\infty); \L^{1}_{\loc}(\R^n;\H))$ 
 belong,  for some $q \in (1,2]$ and all $R,T>0$, 
 to the $\L^{q}(\Omega \times B(0,R);\L^{2}([0,T];\H))$ closure of simple adapted processes.
We say that 
$U\in\L^1(\Omega; \L^1_{\loc}([0,+\infty); \W^{1,1}_{\loc}(\R^n)))$
is a {\em pathwise weak solution} of Problem \eqref{eq:SPDE} if the following property $(\PW)$    holds:

 $(\PW)\colon $  For all (non random) test functions  $\varphi\in \C^{\infty}_{c}([0,\infty)\times \R^{n})$, 
the random functions
\begin{align*}
& V_{\varphi} \colon t\mapsto \langle U(t,.),\varphi(t,.)\rangle - \bigg\langle \int  _{0} ^{t} g(s,.)\, \d W_{\H}(s), \varphi(t,.) \bigg\rangle  
\end{align*}
and
\begin{align*}
& \mathbb{V}_{\varphi} \colon t\mapsto \langle \psi, \varphi(0,.) \rangle 
-
\int  _{0} ^{t}  \langle a(s,.)\nabla U(s,.),\nabla \varphi(s,.) \rangle \, \d s +  \int  _{0} ^{t} \langle
U(s,.),\partial_{s}\varphi(s,.) \rangle\,  \d s \\ &
\qquad  \quad -  
\int  _{0} ^{t} \langle
F(s,.),\nabla \varphi(s,.) \rangle\,  \d s  
- \int  _{0} ^{t} \bigg\langle \int  _{0} ^{s} g(\tau,.)\, \d W_{\H}(\tau), \partial_{s}\varphi(s,.) \bigg\rangle\,  \d s,
\end{align*} 
 agree in $\L^{1}(\Omega\times [0,T])$ for all $T>0$. 
 \end{defn}
 
The bracket terms   $\langle f,\phi \rangle:= \int _{\R^{n}} f(x)\phi(x)\, \d x$  are integrals in the sense of Lebesgue. 

The following lemma explains the definition of all terms, and shows that,  
although a pathwise weak solution is merely a random function rather than a process, 
it can be interpreted  in terms of 
two families of continuous processes $V_{\varphi}$ and $\mathbb{V}_{\varphi}$, indexed by the test functions $\varphi \in \C^{\infty}_{c}([0,\infty)\times \R^{n})$.

\begin{lem} 
 \label{lem:pathweak}
 Assume the conditions of Definition \ref{def:pathweak}.  Fix $\varphi\in \C^{\infty}_{c}([0,\infty)\times \R^{n})$.
 \begin{enumerate}
 \item For all $T,R>0$,  $t\mapsto \int  _{0} ^{t} \,    g(s,.)\,  \d W_{\H}(s)$  is a  continuous  adapted process in $\L^{q}(\Omega; \C([0,T]  ;\L^{q}(B(0,R))))$.
  \item   All bracket terms in  $V_{\varphi}$ are Lebesgue integrals  and  $\V_{\varphi} \in \L^{1}(\Omega\times [0,T])$  for all $T>0$.
  \item All bracket terms in  $\mathbb{V}_{\varphi}$  are Lebesgue integrals  and $\mathbb{V}_{\varphi} \in  \W^{1,1}([0,T]; \L^1(\Omega))$ for all $T>0$. 
  \item If $U$ is a pathwise weak solution of Problem \eqref{eq:SPDE}, the $\L^{1}(\Omega)$-valued map $V_{\varphi}$ has an absolutely continuous representative in $[0,\infty)$, equal to $\mathbb{V}_{\varphi}$.  In particular, $\lim_{t\to 0}V_{\varphi}(t,.)=\langle \psi, \varphi(0,.) \rangle$  almost surely.  \end{enumerate}

\end{lem}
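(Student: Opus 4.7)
The plan is to address the four parts in order: (i) provides the technical foundation, (ii) and (iii) become routine once (i) is in hand, and (iv) is a short consequence of the defining property together with (iii).

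For part (i), I would first verify the claim for a simple adapted process $g$ as in Definition \ref{def:sap}, where the stochastic integral is the finite sum
\[
\Big(\int_0^t g(s,.)\,\d W_{\H}(s)\Big)(x)=\sum_{\ell,m,k}\one_{A_{m,\ell}}\phi_{k,m,\ell}(x)\bigl(W_{\H}(t\wedge t_{\ell+1})h_k-W_{\H}(t\wedge t_{\ell})h_k\bigr),
\]
which is manifestly $(\cF_t)$-adapted and a.s.\ continuous in $t$ as a $\C^{\infty}_c(\R^n)$-valued step function, hence as an $\L^q(B(0,R))$-valued path. To extend by density, I would apply the Burkholder-Davis-Gundy inequality to the scalar martingale $t\mapsto \int_0^t g(s,x)\,\d W_{\H}(s)$ for each fixed $x$ (using the cylindrical nature of $W_{\H}$), and then integrate in $x$ over $B(0,R)$ and take the expectation, obtaining
\[
\mathbb{E}\Big\|t\mapsto \int_0^t g(s,.)\,\d W_{\H}(s)\Big\|^q_{\L^q(B(0,R);\C([0,T]))}\lesssim \|g\|^q_{\L^q(\Omega\times B(0,R);\L^2([0,T];\H))}.
\]
The embedding $\L^q(B(0,R);\C([0,T]))\hookrightarrow \C([0,T];\L^q(B(0,R)))$ (via dominated convergence, with $\sup_t|f(\cdot,t)|$ as dominant) yields the stated regularity, and density of simple processes in the ambient closure extends the definition uniquely. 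Adaptedness is preserved in the $\L^q$ limit along $(\cF_t)$. This is the main technical step.

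For part (ii), the first bracket is a Lebesgue integral because $U(t,.)\in \W^{1,1}_{\loc}$ and $\varphi(t,.)$ has compact support; $\L^1(\Omega\times[0,T])$-integrability follows from the regularity hypothesis on $U$. The second bracket is controlled using part (i) with $R$ chosen so that $\text{supp}(\varphi)\subset [0,T]\times B(0,R)$, together with H\"older's inequality against $\varphi(t,.)\in \L^{q'}$. For part (iii), I would treat each of the five terms separately. The term $\langle\psi,\varphi(0,.)\rangle$ lies in $\L^1(\Omega)$ by hypothesis on $\psi$. The three deterministic $\int_0^t\cdots\,\d s$ terms are absolutely continuous in $t$ by the fundamental theorem of Lebesgue integration, with integrands bounded in $\L^1(\Omega\times[0,T])$ because $a\in \L^\infty$ and the remaining factors $U,\nabla U, F, \partial_s\varphi, \nabla\varphi$ are locally integrable of the right kind on the compact support of $\varphi$. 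For the stochastic term, part (i) ensures $s\mapsto \int_0^s g(\tau,.)\,\d W_{\H}(\tau)$ is a.s.\ continuous in $\L^q(B(0,R))$, so its pairing against $\partial_s\varphi(s,.)$ is continuous in $s$ and jointly integrable in $(\omega,s)$ by the maximal estimate above. Summing and observing that each integral extends to an absolutely continuous $\L^1(\Omega)$-valued function of $t$ with integrable derivative, I conclude $\mathbb{V}_\varphi\in \W^{1,1}([0,T];\L^1(\Omega))$.

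For part (iv), the property $(\PW)$ asserts $V_\varphi=\mathbb{V}_\varphi$ in $\L^1(\Omega\times[0,T])$ for every $T>0$. By part (iii), $\mathbb{V}_\varphi$ is an absolutely continuous representative $[0,T]\to \L^1(\Omega)$ of its equivalence class, hence the unique continuous representative, and therefore also the canonical representative of $V_\varphi$. Evaluating at $t=0$ gives $\mathbb{V}_\varphi(\cdot,0)=\langle\psi,\varphi(0,.)\rangle$, and the pointwise-in-$\omega$ absolute continuity of $t\mapsto \mathbb{V}_\varphi(\omega,t)$ (which holds for a.e.\ $\omega$, as all integrands involved are a.s.\ in $\L^1_{\loc}([0,\infty))$) upgrades the limit from $\L^1(\Omega)$ to almost sure convergence. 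The main obstacle is part (i): producing a globally-in-time continuous adapted representative from the weak local hypothesis that $g$ is only an $\L^q(\Omega\times B(0,R);\L^2([0,T];\H))$ closure limit of simple processes, for which the BDG estimate for $\H$-valued integrands against a cylindrical Brownian motion is the key ingredient.
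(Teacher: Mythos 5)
Your proof follows the paper's argument essentially step for step: the scalar Burkh\"older--Davis--Gundy inequality integrated in $x$ over $B(0,R)$ via Fubini to get the maximal estimate, density of simple adapted processes to extend the integral, and then H\"older and Fubini to handle the bracket terms in (ii)--(iv). The only presentational variation is that you package the passage from $\mathbb{E}\int_B\sup_t|\cdot|^q\,\d x$ to a $\C([0,T];\L^q(B(0,R)))$-valued process as an explicit embedding $\L^q(B;\C([0,T]))\hookrightarrow\C([0,T];\L^q(B))$ (fine, via dominated convergence), and you omit the paper's small diagonal extraction producing a single approximating sequence of simple processes valid for all $T,R$ simultaneously — a point worth spelling out, but the content is the same.
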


\begin{proof} We begin with (i). 
 Let $T,R>0$ and fix $1< q \le 2$. For a simple adapted process $g$, the classical (scalar-valued) Burkh\"older-Davis-Gundy inequality  (see, \textit{\textit{e.g.,}} \cite[Theorem 3.28]{ks})  gives, for almost all $x$,
 \begin{align*}
 \mathbb{E} \underset{t \in [0,T]}{\sup} \bigg| \int  _{0} ^{t}   g(s,x)\,  \d W_{\H}(s)\bigg|^{q}  
 \lesssim 
  \mathbb{E} \| g(.,x) \|_{\L^{2}([0,T];\H)}^{q}.
\end{align*}
Integrating on $B(0,R)$ and using Fubini theorem give the following estimates:
\begin{align*}
&\mathbb{E}  \underset{t \in [0,T]}{\sup} 
\bigg\|\int  _{0} ^{t}  
 g(s,.)\, \d W_{\H}(s)\bigg\|_{\L^{q}(B(0,R))}^{q} 
\leq
 \int _{B(0,R)} \mathbb{E} \underset{t \in [0,T]}{\sup} 
 \bigg| \int  _{0} ^{t}  g(s,x)\,  \d W_{\H}(s)\bigg|^{q} \, \d x \\
&\quad   \lesssim 
 \int _{B(0,R)} \mathbb{E} \| g(.,x) \|_{\L^{2}([0,T];\H)}^{q}\, \d x
 = \mathbb{E} \|g \|_{\L^{q}(B(0,R);\L^{2}([0,T];\H))}^{q}.
\end{align*}
 Now consider a general 
$ g \in \L^1(\Omega; \L^1_{\loc}([0,+\infty); \L^{1}_{\loc}(\R^n;\H))$ that
belongs, for all $T,R>0$,
 to the $\L^{q}(\Omega \times B(0,R);\L^{2}([0,T];\H))$ closure of simple adapted processes.
One can find a sequence of  
simple adapted processes $(g_{T,R,m})_{m \in \mathbb{N}}$ converging to $g$ in $\L^{q}(\Omega \times B(0,R);\L^{2}([0,T];\H))$ norm. A priori, this sequence depends on $T,R$ but doing this for each $T=R=k\ge1$ and using the diagonal principle, one can extract a  sequence independent of $T,R>0$ (observe that the spaces get smaller as each of $T,R$ increases).   Let us call it $(g_{m})_{m \in \mathbb{N}}$.
 By the above estimates, we have that 
 $$ \lim_{m,m'\to \infty}  \mathbb{E}\bigg\|t \mapsto \int  _{0} ^{t}  \big(g_{m'}(s,.)-g_{m}(s,.)\big)\,  \d W_{\H}(s)\bigg\|^q_{ \C([0, T] ;\L^{q}(B(0,R))} = 0. $$
This gives us that $t \mapsto  \int  _{0} ^{t}  g(s,.)\,  \d W_{\H}(s)$ defines 
a continuous adapted $\L^q(B(0,R))$-valued process.

We turn to (ii).  Let $T,R>0$  be  such that $\varphi$ is supported in $[0,T]\times B(0,R)$. Being a $\L^q(B(0,R))$-valued process on $[0,T]$, 
$(\omega, t,x)\mapsto 
 \int  _{0} ^{t}   g(s,.) \,  \d W_{\H}(s)
 (\omega, x)$ 
 is measurable  on $\Omega \times [0,T] \times B(0,R)$,  and is Lebesgue integrable in $x$ for almost all $(\omega, t)$. 
Given the assumptions on $U$ and $\varphi$, 
both  bracket  terms appearing in $V_{\varphi}$ are thus well-defined as Lebesgue integrals in $x$, and are measurable as functions of $(\omega, t)$.   
With the help of  H\"older's inequality and  the estimates proven in (i),  we obtain the following estimates: 
\begin{align}
\label{eq:defdet}
\mathbb{E} \int _{0} ^{T} |\langle U(t,.),\varphi(t,.)\rangle| \, \d t & \lesssim \|U\|_{\L^{1}(\Omega\times [0,T]\times B(0,R))} \|\varphi\|_{\L^{\infty}}, \\
\label{eq:defstoch}
 \mathbb{E} \int _{0} ^{T} \bigg|\bigg\langle \int  _{0} ^{t} g(s,.)\, \d W_{\H}(s), \varphi(t,.) \bigg\rangle \bigg| \, \d t & \lesssim   \|g\|_{\L^{q}(\Omega \times B(0,R);\L^{2}([0,T];\H))}   \|\varphi\|_{\L^{1}([0,T];\L^{q'}(\R^{n}))}.
\end{align}
 
Turning to (iii),  and  considering $\varphi$ and $T,R$ related as in (ii), the case of the bracket with $\psi$ is trivial and we consider the other four terms.  The above reasoning, together with the conditions on $F$, also gives that all brackets are well-defined as Lebesgue integrals in $x$, and are measurable as functions of $(\omega, s)$. Because we further integrate in $s$ on $[0,t]$, it remains to prove that they belong to $\L^{1}( \Omega \times [0,T])$. We have 
\begin{align*}
\mathbb{E} \int _{0} ^{T} |\langle a(s,.)\nabla U(s,.),\nabla \varphi(s,.)\rangle| \, \d s & \lesssim \|\nabla U\|_{\L^{1}(\Omega\times [0,T]\times B(0,R))} \|\nabla \varphi\|_{\L^{\infty}},
\end{align*}
and  the other terms are treated as in \eqref{eq:defdet} and \eqref{eq:defstoch} with the obvious changes.

Concerning (iv), assuming $(\PW)$,  we have that $V_{\varphi}$ and $\mathbb{V}_{\varphi}$ agree as elements of $L^{1}(\Omega\times [0,T])$  for each $T>0$. By (iii), this means that, as an $L^{1}(\Omega)$-valued map, $V_{\varphi}$ has a (unique) absolutely continuous representative equal to $\mathbb{V}_{\varphi}$. Since $\lim_{t\to 0}\mathbb{V}_{\varphi}(t,.)=\langle \psi, \varphi(0,.) \rangle$ almost surely, the same is true for $V_{\varphi}$.
\end{proof}

\subsection{Stochastic integrals in tent spaces.}

For our well-posedness and maximal regularity result, we take $p \in (1,\infty)$, 
and consider data $(\psi, F,g) \in \L^{p}(\Omega; \mathcal{T}^{p})$, where
$$\L^{p}(\Omega; \mathcal{T}^{p}):=\L^{p}(\Omega; \L^{p})\times \L^{p}(\Omega; \T^{p}(\IC^{n})) \times \L^{p}(\Omega; \T^{p}(\H)),$$
and solutions in the space $\L^{p}(\Omega;\dot{\mathcal{T}}^{p}_{1})$ where
$$
\dot{\mathcal{T}}^{p}_{1}:= \{u \in \L^1_{\loc}([0,\infty)\times \R^{n})  \;;\; \nabla u \in \T^{p}(\IC^{n})\}.$$
For $p=2$, the data belong to the usual $\L^2$ space and the solutions have gradients in  $\L^2$. 
\begin{rem}
\label{rem:l2loc}
Note that $\nabla u \in \T^{p}(\IC^{n})$ with $u\in \L^{1}_{\loc}(\R_{+}\times \R^{n})$, and the help of Poincar\'e inequality imply $u
 \in \L^{2}_{\loc} (\R_{+}; \H^1_{\loc}(\R^{n}))$.  
\end{rem}
For some calculations it will be useful to use both $\T^p$ and $\V^p$ depending on the value of $p$. Define
\begin{align*}
\E^{p} : = \begin{cases} \V^{p} \quad \text{if} \; p \geq 2,\\
\T^{p} \quad \text{if} \; p<2,
\end{cases}
\qquad
\F^{p} : = \begin{cases} 
\T^{p} \quad \text{if} \; p\geq2,\\
\V^{p} \quad \text{if} \; p < 2,
\end{cases}
\end{align*}
for $1<p<\infty$.
In this range of $p$,  
 the spaces $\T^{p}, \V^p$ are reflexive (and UMD) with duals $\T^{p'}, \V^{p'}$ in the $\L^2(\R_{+}\times \R^n)$ duality, respectively, so the same is true for $\E^p$ and $\F^{p'}$. Moreover, as noted in the   introduction  $\E^{p} \hookrightarrow \T^{p} \hookrightarrow \F^{p}$.  \\

\begin{defn}
\label{def:adapted}
 Let $1<p<\infty$. We say that a random $\T^{p}(\H)$-valued 
function is {\em adapted} if it belongs to the closure, in the $\L^{p}(\Omega; \T^{p}(\H))$ norm, of the set of simple adapted processes. 
\end{defn} 

This definition expresses the new paradigm of working with tent spaces: their norms involve local integrals in both time and space variables simultaneously. 
Of course, simple adapted processes    belong to $\L^{p}(\Omega; \T^{p}(\H))$. Indeed, bounded functions with compact support in $[0,\infty)\times \R^n$ belong to $\T^p$ as one can easily check, and one concludes by linearity from \eqref{eq:sap}.
Note that  $\T^p$, $p\ne 2$, is not of the form $\L^q(\I;\B)$, where $\B$ would be a Banach space $\B$ (like $\L^p$) and $\I$ an interval, so the completion is not exactly the same as in the definition of UMD-valued adapted processes as limits of elementary processes in \cite[Section 2.4]{nvw}.  
 The  content of the following lemma is to extend the meaning of   the It\^o   integral to   all adapted $g \in \L^{p}(\Omega;\T^{p}(\H))$.

\begin{lem} 
\label{lem:Smap}
Let $1<p<\infty$, $T>0$, and $g:  \Omega \times \R_{+} \times \R^{n}  \to \H$ be a simple adapted process. Then  $$
\mathbb{E}\bigg\|(t,x) \mapsto  \one_{[0,T]}(t)\bigg( \int _{0} ^{t} g(s,.)\, \d W_{\H}(s)\bigg)(x)\bigg\|^{p} _{\F^{p}}
\lesssim  T^{p/2} \mathbb{E} \|g\|^{p} _{\T^{p}(\H)}.$$
Consequently,$$ (t,x) \mapsto \one_{[0,T]}(t)\bigg(\int _{0} ^{t} g(s,.)\, \d W_{\H}(s)\bigg)(x)$$ 
extends by density for all adapted  $g \in \L^{p}(\Omega;\T^{p}(\H))$ to an element in $\L^p(\Omega;\F^p)$. 
\end{lem}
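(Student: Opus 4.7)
The plan is to split the argument according to the definition of $\F^p$, treating the two regimes $1<p<2$ and $p\geq 2$ separately, and then extending the map by density as indicated by the statement.

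For $1<p<2$ (where $\F^p=\V^p$), the bound can be obtained pointwise in $x\in \R^n$. The process $t\mapsto I(t,x):=\bigl(\int_0^t g(s,\cdot)\, \d W_{\H}(s)\bigr)(x)$ is a scalar $\L^2$-martingale whose quadratic variation is $\int_0^t|g(s,x)|_{\H}^2\, \d s$. From $\int_0^T|I(t,x)|^2\, \d t\leq T\sup_{t\in[0,T]}|I(t,x)|^2$ and the scalar $\L^p$ Burkholder--Davis--Gundy inequality applied pointwise in $x$, we get $\mathbb{E}\sup_{t\leq T}|I(t,x)|^p\lesssim \mathbb{E}\bigl(\int_0^T|g(s,x)|_{\H}^2\, \d s\bigr)^{p/2}$. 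Integrating in $x$ yields $\mathbb{E}\|\one_{[0,T]}I\|_{\V^p}^p\lesssim T^{p/2}\mathbb{E}\|g\|_{\V^p(\H)}^p$, and the continuous embedding $\T^p(\H)\hookrightarrow \V^p(\H)$ (valid for $p<2$) replaces the $\V^p(\H)$-norm by the $\T^p(\H)$-norm on the right.

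For $2\leq p<\infty$ (where $\F^p=\T^p$) the same reduction produces $\mathbb{E}\|g\|_{\V^p(\H)}^p$ on the right, but this is strictly larger than $\mathbb{E}\|g\|_{\T^p(\H)}^p$ when $p>2$, so it is insufficient. The strategy is instead to interpret, for each fixed $x$, the random function $(t,y)\mapsto I(t,y)$ as a stochastic integral valued in the Hilbert space $\H_x:=\L^2([0,T]\times\R^n; d\mu_x)$, where $d\mu_x(t,y):=|B(x,\sqrt t)|^{-1}\one_{B(x,\sqrt t)}(y)\, \d y\, \d t$, so that $\mathbb{E}\|I\|_{\H_x}^p$ is exactly the integrand of the tent-space norm. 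Writing $I=\int_0^T G(s)\, \d W_{\H}(s)$ with $G(s)(t,y)=\one_{s\leq t}\,g(s,y)\in \H$, the Hilbert-valued Burkholder--Davis--Gundy inequality gives
\[
\mathbb{E}\|I\|_{\H_x}^p \lesssim \mathbb{E}\Bigl(\int_0^T\|G(s)\|_{\H_x\otimes \H}^2\, \d s\Bigr)^{p/2}
= \mathbb{E}\Bigl(\int_0^T\fint_{B(x,\sqrt t)}\!\!\!\int_0^t|g(s,y)|_{\H}^2\, \d s\, \d y\, \d t\Bigr)^{p/2}.
\]
Integrating in $x$, the task is to dominate the resulting quantity by $T^{p/2}\mathbb{E}\|g\|_{\T^p(\H)}^p$.

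The hardest step will be this final comparison: keeping the $t$-dependence of the conical average is crucial, since any crude bound such as $\int_0^t\leq\int_0^T$ collapses the structure back to $\V^p(\H)$. The plan is to apply Jensen's inequality (using $p/2\geq 1$) on the outer $t$-integral to transfer the $(\cdot)^{p/2}$ inside, perform a dyadic-in-time decomposition of the inner quadratic variation $\int_0^t|g(s,y)|_{\H}^2\, \d s$ that matches the scales $\sqrt t$ of the conical ball $B(x,\sqrt t)$, and conclude via the Fubini identity $\int_{\R^n}\fint_{B(x,\sqrt t)}f(y)\, \d y\, \d x=\int_{\R^n}f(y)\, \d y$ together with the $\L^{p/2}$-boundedness of the Hardy--Littlewood maximal function to recover a $\T^p(\H)$-type density at each scale. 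Once the bound is established for all simple adapted $g$, Definition~\ref{def:adapted} and the completeness of $\L^p(\Omega;\F^p)$ furnish the claimed continuous extension.
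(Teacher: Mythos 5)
Your argument splits into two regimes, and the status differs sharply between them.

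For $1<p<2$ the proposal is correct and is a legitimate alternative to the paper's route. You apply scalar BDG pointwise in $x$ (combined with $\int_0^T|I(t,x)|^2\,\mathrm d t\le T\sup_t|I(t,x)|^2$), integrate in $x$, and invoke $\T^p\hookrightarrow\V^p$. The paper instead performs a single application of the van Neerven--Veraar--Weis It\^o isomorphism in the UMD space $\F^p$ and reduces to the sublinear operator $S(h)(t,x)=1_{[0,T]}(t)\big(\int_0^t|h(s,x)|^2\,\mathrm d s\big)^{1/2}$, whose $\V^p$-boundedness is a one-line Fubini computation. Both routes arrive at the same conclusion for $p<2$.

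For $p\ge 2$ there is a genuine gap. Your Hilbert-valued BDG step in the parametrized Hilbert space $\H_x$ is fine and, after unwinding, is precisely the statement that one must bound $\|S(h)\|_{\T^p}\lesssim T^{1/2}\|h\|_{\T^p}$ where $h=|g|_{\H}$. The problem is the final comparison, which you flag as ``the hardest step'' but do not carry out, and the sketched plan does not close. Applying Jensen's inequality on the outer $t$-integral gives
$$
\int_{\R^n}\!\Big(\int_0^T\fint_{B(x,\sqrt t)}\!\int_0^t h(s,y)^2\,\mathrm d s\,\mathrm d y\,\mathrm d t\Big)^{p/2}\mathrm d x
\le T^{p/2-1}\int_0^T\!\int_{\R^n}\!\Big(\fint_{B(x,\sqrt t)}\!\int_0^t h(s,y)^2\,\mathrm d s\,\mathrm d y\Big)^{p/2}\mathrm d x\,\mathrm d t,
$$
and the Hardy--Littlewood maximal function on $\L^{p/2}$ (valid for $p>2$) controls the right-hand side by $T^{p/2}\|h\|_{\V^p}^p$, not $T^{p/2}\|h\|_{\T^p}^p$. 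Since $\V^p\subsetneq\T^p$ for $p>2$, this is the \emph{wrong direction} --- exactly the collapse you said should be avoided. The dyadic-in-time decomposition doesn't rescue this: matching scales $\sqrt s\sim 2^{-j/2}\sqrt t$ against the ball $B(x,\sqrt t)$ produces an iterated average $\fint_{B(x,\sqrt t)}h(s,\cdot)^2\lesssim M\phi_s(x)$ with $\phi_s(z)=\fint_{B(z,\sqrt s)}h(s,y)^2\,\mathrm d y$, and closing the loop would then require an $\ell^1$-valued Fefferman--Stein inequality $\|\int_0^T M\phi_s\,\mathrm d s\|_{\L^{p/2}}\lesssim\|\int_0^T\phi_s\,\mathrm d s\|_{\L^{p/2}}$, which is false. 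The paper avoids all of this by establishing $\|S(h)\|_{\T^\infty}\le T^{1/2}\|h\|_{\T^\infty}$ directly via a Carleson-ball computation and then using real interpolation with $\T^2=\L^2$; that interpolation step (or an equivalent) is the essential ingredient your sketch is missing.
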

\begin{proof}
 Let $g$ be a simple adapted process with the notation as in \eqref{eq:sap}. 
 For all $s>0$, define $$\tilde{g}(s)\colon (t,x)\mapsto  \one_{[0,T]}(t)  \, \one_{[0,t]}(s)\, g(s,x).$$
Then $s\mapsto \tilde{g}(s)$  is an adapted  $\L^{2}(\R_{+}\times \R^{n}; \H)$-valued process. Indeed, for $s> t_{N+1}$, $\tilde g(s)=0$ and for all 
$\ell=0,\ldots,N$ and $s \in (t_{\ell},t_{\ell+1}]$,
$$
\tilde{g}(s)  (\omega, t ,x)  =  \one_{[0,T]}(t)  \,   \one_{[0,t]}(s) 
\sum _{m=1} ^{M} \sum _{k=1} ^{K} \one_{A_{m,\ell,k}}(\omega)\, \phi_{m,\ell,k}(x)\, h_{k},
$$ 
so that one sees that it is   $\mathcal{F}_{t_{\ell}} \subset \mathcal{F}_{s}$ measurable  as an $\L^{2}(\R_{+}\times \R^{n}; \H)$-valued random variable.  
Consequently, $(t,x)\mapsto \big( \int  _{0} ^{\infty} \tilde{g}(s) \, \d W_{\H}(s)\big) (t,x)$
defines a random element of $\L^{2}(\R_{+}\times \R^{n})= \T^{2}$ using the It\^o integral of $\L^{2}(\R_{+}\times \R^{n}; \H)$-valued processes.   
 Using the It\^o isomorphism for stochastic integrals from
 \cite[Theorem 3.5]{nvw}  and the square function estimate 
\cite[Proposition 2.1]{nvw1} in the UMD space $\F^p$ applied to $\tilde g$,  we have that 
\begin{align*}
&\mathbb{E}\bigg\|(t,x) \mapsto   \bigg(\int _{0} ^{\infty}  \tilde{g}(s)\,  \d W_{\H}(s)\bigg)(t,x)\bigg\|^{p} _{\F^{p}}
\\
& \qquad \lesssim \mathbb{E}\bigg\|(t,x) \mapsto \bigg(\int _{0} ^{\infty} | \tilde{g}(s) (t,x)|_{\H}^{2}\,  \d s\bigg)^{\frac{1}{2}}\bigg\|^{p}_{\F^{p}}.
\end{align*}
Reexpressing in terms of $g$ using
 $$
\one_{[0,T]}(t)  \bigg(\int  _{0} ^{t} {g}(s,.)\, \d W_{\H}(s)\bigg) (x)=\bigg(\int_{0}^{\infty}  \tilde{g}(s)\,  \d W_{\H}(s)\bigg) (t, x),$$ 
this gives us 
\begin{align*}
&\mathbb{E}\bigg\|(t,x) \mapsto  \one_{[0,T]}(t) \bigg(\int _{0} ^{t} g(s,.)\, \d W_{\H}(s)\bigg)(x)\bigg\|^{p} _{\F^{p}}
\\
& \qquad \lesssim \mathbb{E}\bigg\|(t,x) \mapsto \one_{[0,T]}(t)\bigg(\int _{0} ^{t} |g(s,x)|_{\H}^{2}\,  \d s\bigg)^{\frac{1}{2}}\bigg\|^{p}_{\F^{p}}  =  \mathbb{E} \|S(|g|_{\H})\|^{p} _{\F^{p}}, 
\end{align*}
where $S$ is the deterministic sublinear operator defined by
$$
S(h):(t,x)\mapsto 1_{[0,T]}(t) \bigg(\int _{0} ^{t} |h(s,x)|^{2}\, \d s\bigg)^{{1/2}}. 
$$
For $1<p<\infty$, this operator is bounded on $\V^{p}$ since 
\begin{align*}
\int _{\R^{n}} \bigg(\int _{0} ^{T} \int _{0} ^{t} |h(s,x)| ^{2}\, \d s\, \d t\bigg)^{{p/2}}\, \d x
\leq T^{{p/2}} \int _{\R^{n}} \bigg(\int _{0} ^{\infty} |h(s,x)| ^{2} \, \d s \bigg)^{{p/2}}\, \d x.
\end{align*}
This implies that $S$ is bounded on $\F^{p}$ for $p \leq 2$.
To prove that $S$ is also bounded on $\F^{p}$ for $p \geq 2$, we just need to show that it is bounded on $\T^{\infty}$, by real interpolation.
This holds because, given any ball $B$ of radius $r_{B}$, and any function $h \in \T^{\infty}$, we have 
\begin{align*}
 \fint _{B} \int _{0} ^{r_{B}^2 \wedge T} \int _{0} ^{t} |h(s,y)|^{2}\, \d s\, \d t \,\d y 
\leq T  \fint _{B} \int _{0} ^{r_{B}^2}  |h(s,y)|^{2}\, \d s\,  \d y \leq T^{} \|h\|_{\T^{\infty}}^2.
\end{align*}
Therefore $\|S(h)\|_{\T^{\infty}} \le T^{{1/2}} \|h\|_{\T^{\infty}}$ as desired. In conclusion, $\|S(h)\|_{\F^{p}} \lesssim T^{{1/2}}\|h\|_{\F^{p}}$ for all $h \in \F^{p}$ and all $1<p<\infty$.

Applying this  estimate  extended to $\H$-valued random functions, we thus have 
\begin{align*} 
 \mathbb{E} \|S(|g|_{\H})\|^{p} _{\F^{p}}  
\lesssim T^{p/2} \mathbb{E}\||g|_{\H}\|^{p} _{\F^{p}} \lesssim T^{p/2}\mathbb{E} \|g\| ^{p} _{\T^{p}(\H)},
\end{align*}
where we have used $\T^{p} \hookrightarrow \F^{p}$, and this concludes the argument. 
\end{proof}

\begin{rem} It is likely not true that $S$ is bounded on $\T^p$ when $p<2$, and boundedness  from $\T^p$ to $\F^p$  is the  next best result. 
\end{rem} 

  Our concept of pathwise weak solution can be used for Problem \eqref{eq:SPDE} with data
 $(\psi,F,g) \in \L^{p}(\Omega; \mathcal{T}^{p})$ and g adapted. This is the content of the following Lemma.

 \begin{lem}
 \label{lem:pwsTp}
 Let $1<p<\infty$, $(\psi,F,g) \in \L^{p}(\Omega; \mathcal{T}^{p})$ with g adapted, and let $U \in \L^{p}(\dot{\mathcal{T}}^{p}_{1})$.  Then $\psi\in \L^1(\Omega; \L^1_{\loc}(\R^n ))$, $F\in \L^1(\Omega; \L^1_{\loc}([0,+\infty); \L^{1}_{\loc}(\R^n; \IC^n )))$, and $ g \in \L^1(\Omega; \L^1_{\loc}([0,+\infty); \L^{1}_{\loc}(\R^n);\H))$ is such that, for $q= \min(p,2)$, and all $T,R>0$,  $g$ 
belongs to the $\L^{q}(\Omega \times B(0,R);\L^{2}([0,T];\H))$ closure of simple adapted processes. 
Moreover, 
  $U\in\L^1(\Omega; \L^1_{\loc}([0,+\infty); \W^{1,1}_{\loc}(\R^n)))$.
\end{lem}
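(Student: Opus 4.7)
This lemma verifies that data $(\psi,F,g)\in \L^p(\Omega;\mathcal{T}^p)$ and candidate functions $U\in \L^p(\Omega;\dot{\mathcal{T}}^p_1)$ fit into the abstract framework of Definition \ref{def:pathweak}. The whole argument reduces to two routine ingredients: a local embedding of tent spaces into mixed $\L^q\L^2$ spaces on bounded space--time regions, together with H\"older's inequality on the (finite) probability space.

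The main analytic point is the following local embedding: for every $R,T>0$ there exists $C=C(R,T,p)$ such that, for any (scalar or $\H$-valued) $h\in\T^p(\H)$,
\begin{equation*}
\|h\|_{\L^q(B(0,R);\,\L^2([0,T];\H))}\le C\,\|h\|_{\T^p(\H)},\qquad q:=\min(p,2)\in(1,2].
\end{equation*}
I would prove this by splitting at $p=2$. For $p\ge 2$ (hence $q=2$), every $(t,y)\in [0,T]\times B(0,R)$ lies in $B(x,\sqrt t)$ for every $x$ in the enlarged ball $B':=B(0,R+\sqrt T)$; reversing Fubini in the definition of the area function yields
\begin{equation*}
\int_0^T\!\int_{B(0,R)}|h(t,y)|_\H^{\,2}\,\d y\,\d t\;\le\;\int_{B'}\mathcal{A}(|h|_\H)(x)^{2}\,\d x,
\end{equation*}
and H\"older on the bounded set $B'$ dominates the right-hand side by $|B'|^{1-2/p}\,\|h\|_{\T^p(\H)}^{2}$. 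For $1<p<2$ (hence $q=p$), the continuous inclusion $\T^p\hookrightarrow \V^p$ recorded in the introduction (see \cite{ahm}) gives directly
\begin{equation*}
\|h\|_{\L^p(B(0,R);\,\L^2([0,T];\H))}\le \|h\|_{\V^p(\H)}\lesssim \|h\|_{\T^p(\H)}.
\end{equation*}

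With this embedding the conclusions are mechanical. The assertion on $\psi$ is immediate from $|\Omega|=1$ and H\"older in $x$ on bounded sets. For $F$ and $g$, an additional Cauchy--Schwarz in space--time upgrades the embedding to $\|\cdot\|_{\L^1((0,T)\times B(0,R))}\lesssim \|\cdot\|_{\T^p}$, and H\"older in $\omega$ then gives the required $\L^1(\Omega;\L^1_{\loc})$ memberships. For the adapted closure property of $g$, let $(g_m)$ be a sequence of simple adapted processes converging to $g$ in $\L^p(\Omega;\T^p(\H))$ (provided by Definition \ref{def:adapted}). Applying the local embedding in $\L^p(\Omega)$-norm, combined with H\"older in $\omega$ to pass from $\L^p(\Omega)$ to $\L^q(\Omega)$ when $p\ge 2$, shows that the same $(g_m)$ converges to $g$ in $\L^q(\Omega\times B(0,R);\L^2([0,T];\H))$, which is precisely the required closure property.

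For $U$: the definition of $\dot{\mathcal{T}}^p_1$ already gives $U\in \L^1_{\loc}([0,\infty)\times\R^n)$ almost surely, and the embedding applied to $\nabla U\in \L^p(\Omega;\T^p(\IC^n))$ (together with H\"older in $\omega$) delivers $\nabla U\in \L^1(\Omega;\L^1_{\loc})$; the pointwise-in-$\omega$ application of Poincar\'e's inequality used in Remark \ref{rem:l2loc}, combined with the $\L^p(\Omega)$ structure of $\nabla U$, then upgrades the almost sure local integrability of $U$ to the $\L^1(\Omega)$-control of $U$ itself, yielding $U\in \L^1(\Omega;\L^1_{\loc}([0,\infty);\W^{1,1}_{\loc}(\R^n)))$. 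The only real technical obstacle is the $1<p<2$ case of the local embedding, where $\mathcal{A}(h)\in\L^p(\R^n)$ is not automatically in $\L^2_{\loc}$; it is handled cleanly by the inclusion $\T^p\hookrightarrow\V^p$. Everything else is bookkeeping with H\"older.
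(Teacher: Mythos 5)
Your proof is correct and follows essentially the same route as the paper: the key step in both is the local embedding $\T^{p}\subset \L^{q}(B(0,R);\L^{2}([0,T]))$ with $q=\min(p,2)$, proved for $p\ge 2$ by the Fubini/averaging trick over the enlarged ball $B(0,R+\sqrt{T})$ plus H\"older, and for $p<2$ via $\T^{p}\hookrightarrow\V^{p}$, after which H\"older in $\omega$ and the density of simple adapted processes give all the conclusions, including the closure property for $g$. The only cosmetic difference is your explicit appeal to Poincar\'e for the $U$ statement, where the paper simply applies the same embedding to $\nabla U$ and concludes.
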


\begin{proof}
Since $\L^p(\Omega; \L^p) \subset \L^1(\Omega; \L^1_{\loc}(\R^n))$ by Jensen's inequality in $(\omega, x)$, we have that $\psi\in \L^1(\Omega; \L^1_{\loc}(\R^n))$. 

Next, we show that for all $T,R>0$, 
\begin{equation}
\label{eq:L1loc}
\T^p\subset  \L^{q}( B(0,R);\L^{2}([0,T])) \subset  \L^{1}( [0,T]\times B(0,R)),
\end{equation}
where $q=\min(p,2)$, with estimates. 
The second inclusion is a mere consequence of H\"older's inequality. We turn to the first one. Let $h$ be a  measurable function. 
For $p\ge 2$, this is based on the following averaging technique and Jensen inequality. 
\begin{align*}
 \int_{0}^T\int_{B(0,R)} &|h(t,y)|^2\, \d t\,\d y  \leq  \int_{B(0,R+\sqrt T)}\int_{0}^T \fint_{B(x, \sqrt t)} |h(t,y)|^2 \, \d t\,\d y \, \d x
 \\
 &\leq C(p,T,R) \bigg(\int_{B(0,R+\sqrt T)}\bigg(\int_{0}^T \fint_{B(x, \sqrt t)} |h(t,y)|^2 \, \d t\,\d y \bigg)^{p/2}\, \d x\bigg)^{2/p}. 
\end{align*}
For $p\le 2$, 
\begin{align*}
\int_{B(0,R)} \bigg( \int_{0}^T |h(t,y)|^2\, \d t\bigg)^{p/2}\,\d y \le \|h\|_{\V^p}^p \lesssim \|h\|_{\T^p}^p,
\end{align*}
since $ \T^{p} \hookrightarrow \V^{p}$ as noted in the Introduction.
If, now, $h$ depends also on $\omega$, then this yields
$$
\|h\|_{\L^{1}(\Omega \times [0,T] \times B(0,R))} \lesssim \|  h\|_{\L^{q}(\Omega \times B(0,R);\L^{2}([0,T]))}   \lesssim
\|h\|_{\L^{p}(\Omega;\T^{p})}. 
$$
Thus,  $F \in \L^p(\Omega; \T^p(\IC^n))$ yields in particular  $F\in \L^1(\Omega; \L^1_{\loc}([0,+\infty); \L^{1}_{\loc}(\R^n; \IC^n )))$. Similarly, this applies to $\nabla U$, hence the conclusion for $U$. 
Lastly, given 
an adapted process $g \in \L^{p}(\Omega;\T^{p}(\H))$, there is by definition a sequence $(g_{m})_{m \in \mathbb{N}}$ of simple adapted processes converging to $g$ in $ \L^{p}(\Omega;\T^{p}(\H))$. 
Then, the above estimate shows that for any $T,R>0$, the restriction of $g_{m} $ to $\Omega \times [0,T] \times B(0,R)$ converges to $g$ in $\L^{q}(\Omega \times B(0,R);\L^{2}([0,T];\H))$, hence that
$g$ belongs to the $\L^{q}(\Omega \times B(0,R);\L^{2}([0,T];\H))$ closure of simple adapted processes. 
\Bk 
%
%
\end{proof}

The above lemma thus shows that, for  $(\psi,F,g) \in \L^{p}(\Omega; \mathcal{T}^{p})$ and g adapted, $U \in \L^{p}(\Omega;\dot{\mathcal{T}}^{p}_{1})$ is a pathwise weak solution of Problem \eqref{eq:SPDE}  if and only if it satisfies property $(\PW)$ in Definition \ref{def:pathweak}.\\

The next result is for approximation purposes. 
\begin{lem}\label{lem:approxdata}
Let $(\psi,F,g) \in \L^{p}(\Omega; \mathcal{T}^{p})$, with $g$ adapted.  
 Then there exists a sequence $(\psi_{\ell},F_{\ell},g_{\ell}) \in \L^{p}(\Omega; \mathcal{T}^{p})\cap \L^{2}(\Omega;\mathcal{T}^{2})$, $\ell\ge1$, with $g_{\ell}$ being a simple adapted process, converging to $(\psi,F,g)$ in $ \L^{p}(\Omega; \mathcal{T}^{p})$ norm.
\end{lem}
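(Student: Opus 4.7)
The plan is to approximate each of the three components $\psi$, $F$, $g$ separately by elementary truncations for the first two, and by invoking the very definition of ``adapted'' for the third.

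For $\psi \in \L^{p}(\Omega;\L^p)$, I would set
$$\psi_{\ell}(\omega,x) := \psi(\omega,x)\,\one_{\{|\psi(\omega,x)|\le \ell\}}\,\one_{B(0,\ell)}(x).$$
Then $|\psi_{\ell}(\omega,\cdot)|^{2}\le \ell^{2}\one_{B(0,\ell)}$, giving a uniform bound in $\omega$ of $\|\psi_{\ell}\|_{\L^2}$, so $\psi_{\ell}\in \L^{\infty}(\Omega;\L^{2})\subset \L^2(\Omega;\L^2)$. Since $|\psi_{\ell}|\le|\psi|$ and $\psi_{\ell}\to\psi$ pointwise almost everywhere, dominated convergence yields $\psi_{\ell}\to\psi$ in $\L^{p}(\Omega;\L^{p})$. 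Exactly the same truncation works for $F$: set
$$F_{\ell}(\omega,t,x) := F(\omega,t,x)\,\one_{\{|F(\omega,t,x)|_{\IC^{n}}\le \ell\}}\,\one_{[0,\ell]}(t)\,\one_{B(0,\ell)}(x).$$
The key observation is that if $F_{\ell}$ is bounded by $\ell$ and supported in $[0,\ell]\times B(0,\ell)$, then the associated conical square function $\mathcal{A}(F_{\ell})(\omega,x)$ is pointwise bounded and supported in $B(0,\ell+\sqrt{\ell})$ (since $B(x,\sqrt{t})\cap B(0,\ell)=\emptyset$ when $|x|>\ell+\sqrt{\ell}$ and $t\le \ell$). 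Hence $F_{\ell}\in \L^{\infty}(\Omega;\T^{p}(\IC^{n})\cap \T^{2}(\IC^{n}))$. For convergence, dominated convergence in $(t,y)$ gives $\mathcal{A}(F-F_{\ell})(\omega,x)\to 0$ pointwise a.e.\ while $\mathcal{A}(F-F_{\ell})\le \mathcal{A}(F)\in \L^{p}(\Omega\times\R^{n})$, so a second application of dominated convergence yields $F_{\ell}\to F$ in $\L^{p}(\Omega;\T^{p}(\IC^{n}))$.

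For $g$, the sequence is dictated by Definition \ref{def:adapted}: since $g$ is adapted, there exists by definition a sequence $(g_{\ell})$ of simple adapted processes with $g_{\ell}\to g$ in $\L^{p}(\Omega;\T^{p}(\H))$. The only additional point to verify is that each $g_{\ell}$ automatically lies in $\L^{2}(\Omega;\T^{2}(\H))$. This is immediate from the form \eqref{eq:sap}: a simple adapted process is uniformly bounded in $(\omega,t,x)$ and has support contained in $[0,t_{N+1}]\times K$ for some compact $K\subset\R^{n}$, so by the same bounded-compactly-supported argument as for $F_{\ell}$, we have $g_{\ell}\in \L^{\infty}(\Omega;\T^{2}(\H))$.

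Assembling the three approximations produces the triple $(\psi_{\ell},F_{\ell},g_{\ell})$ in $\L^{p}(\Omega;\mathcal{T}^{p})\cap \L^{2}(\Omega;\mathcal{T}^{2})$ with $g_{\ell}$ a simple adapted process, and convergence to $(\psi,F,g)$ in $\L^{p}(\Omega;\mathcal{T}^{p})$. There is no genuine obstacle here; the argument is essentially routine truncation plus the definition of adaptedness. The only mild subtlety worth flagging in the write-up is the verification that bounded, compactly-supported functions of $(t,x)$ belong to $\T^{p}$ for every $p$, which, as noted, follows from the spatial localization of the conical averaging.
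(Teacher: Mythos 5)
Your proof is correct, and for the $F$-component it takes a genuinely different (and somewhat more elementary) route than the paper. The paper first truncates to a compact set $K$ in $(t,x)$, then truncates in $\omega$ only, keeping $F(\omega,\cdot)$ itself unchanged on the event $\{\|F(\omega,\cdot)\|_{\T^p}\le m\}$, and then invokes Amenta's lemma to convert the equivalence $\|F(\omega,\cdot)\|_{\T^p}\eqsim\|F(\omega,\cdot)\|_{\L^2(K)}$ into an $\L^\infty(\Omega;\T^2)$ bound. You instead perform a pointwise truncation $|F(\omega,t,x)|\le\ell$ together with a compact-support cutoff, so that each $F_\ell$ is uniformly bounded with compact support and therefore trivially in $\T^q$ for every $q$; this bypasses Amenta's lemma entirely, and your two-stage dominated convergence argument (first in $(t,y)$ inside the conical average, then in $(\omega,x)$ against the majorant $\mathcal{A}(F)\in\L^p(\Omega\times\R^n)$) is exactly right. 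The price of the paper's extra sophistication is a uniform-in-$p$ construction (the same truncation witnesses density in every $\L^p(\Omega;\T^p)$ simultaneously, as noted in their footnote); your pointwise truncation also has this property, so nothing is lost. The treatment of $\psi$ and $g$ is the same in both: truncation for $\psi$, and the definition of adaptedness plus the bounded-compact-support observation for $g$.
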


\begin{proof}
For $\psi \in \L^p(\Omega; \L^p)$ this is standard procedure by truncations in $(\omega,x)$. 
 As $g$ is adapted in $\L^p(\Omega, \T^p(\H))$, we may  take  all $g_{\ell}$ to be simple adapted processes by definition, and they also belong to $\L^2(\Omega; \T^2(\H))$.  
For $F\in \L^p(\Omega;\T^p)$, first it is easy to see that  $F(\omega,.)$ multiplied by indicators $\one_{K_{\ell}}$, where $(K_{\ell})_{\ell\ge 0}$ is a non-decreasing sequence  of compact sets  exhausting $\R_{+}\times \R^n$, converges in $\T^p$ to $F(\omega,.)$. As $\|\one_{K_{\ell}}F(\omega,.)\|_{\T^p}\le \|F(\omega,.)\|_{\T^p}$, we can use  dominated convergence to conclude for convergence in $\L^{p}(\Omega; \T^{p})$. Second, we may assume that $F(\omega,.)$ is now supported in a fixed compact set $K$ for all $\omega$.  Set $F_{m}(\omega,.)= \one_{A_{m}}(\omega)F(\omega,.)$, with $A_{m}=\{\omega ; \|F(\omega,.)\|_{\T^p}\le m\}$. Clearly $F_{m}$ converges to $F$ in  $\L^{p}(\Omega; \T^{p})$ as $m\to \infty$. Moreover, 
 by Lemma 3.1 in \cite{Amenta}, $\|F(\omega,.)\|_{\T^p} \eqsim \|F(\omega,.)\|_{\L^2(K)}\eqsim \|F(\omega,.)\|_{\T^2}$ with implicit constants that depend on $K$. It follows that $$\|F_{m}\|_{\L^\infty(\Omega;\T^2)} \lesssim \|F_{m}\|_{\L^\infty(\Omega;\T^p)} \le m
 $$
and this implies the desired density.\footnote{Identifying   the subspace of  $\T^p$ consisting of functions supported in a compact set $K$ with $\L^2(K)$, we have shown that $\L^\infty(\Omega; \L^2(K))$ is a subspace of $\L^p(\Omega;\T^p)$ for all $1<p<\infty$. And if $\T^p_{c}$ is the subspace of $\T^p$ consisting of functions having compact support, then it is indentified with $\L^2_{c}(\R_{+}\times \R^n)$ and we have shown that $\L^\infty(\Omega; \L^2_{c}(\R_{+}\times \R^n))$ is dense in $\L^{p}(\Omega; \T^{p})$, with a universal procedure working  simultaneously if $F$ happens to belong to several $\L^p(\Omega;\T^p)$.}   
\end{proof}

The last preliminary to our main result is the following proposition, which gives the required  maximal regularity estimate for the stochastic heat equation.

\begin{prop}
\label{lem:ito}
Let $1<p<\infty$. Then, for all simple  adapted processes $g$, 
\begin{align}\label{eq:Tpstoch}
\mathbb{E}&\bigg\|(t,x)\mapsto \bigg( \int  _{0} ^{t} \nabla e^{(t-s) \Delta} g(s,.)\, \d W_{\H}(s)\bigg) (x)\bigg\|_{\T^{p}(\mathbb{C}^{n})} ^{p}  
\lesssim \|g\|^{p} _{\L^p(\Omega;  \T^{p}(\H))}.
\end{align}
 Consequently, the inequality extends to all adapted $g \in \L^p(\Omega;  \T^{p}(\H))$ by density.
\end{prop}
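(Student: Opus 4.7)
The plan is to recycle the Itô isomorphism/square function machinery from the proof of Lemma \ref{lem:Smap} and feed the result into the boundedness of the quadratic maximal regularity operator $\cQ$ established in Theorems \ref{thm:quad1} and \ref{thm:quad2}, using an $\H$-valued version of $\cQ$.

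Let $g$ be a simple adapted process. I first define the $\T^2(\IC^n)$-valued integrand
\begin{equation*}
\tilde g(s):(t,x)\mapsto \one_{\{s\le t\}}\,\nabla e^{(t-s)\Delta}g(s,\cdot)(x).
\end{equation*}
Exactly as in the proof of Lemma \ref{lem:Smap}, for $s\in(t_\ell,t_{\ell+1}]$ the factor $g(s,\cdot)$ is $\mathcal{F}_{t_\ell}$-measurable while the rest is deterministic, so $s\mapsto \tilde g(s)$ is an adapted $\T^2(\IC^n)$-valued process. By Fubini, the It\^o integral $\int_0^\infty \tilde g(s)\,\d W_{\H}(s)$ in the Hilbert space $\T^2(\IC^n)$ coincides a.s.\ with the function
\begin{equation*}
(t,x)\mapsto \bigg(\int_0^t \nabla e^{(t-s)\Delta}g(s,\cdot)\,\d W_{\H}(s)\bigg)(x).
\end{equation*}

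Next, I invoke the It\^o isomorphism for stochastic integrals in the UMD space $\T^p(\IC^n)$ combined with the square function estimate in the UMD lattice $\T^p$, in precisely the form used in Lemma \ref{lem:Smap} (via \cite[Thm.~3.5]{nvw} and \cite[Prop.~2.1]{nvw1}). This yields
\begin{equation*}
\mathbb{E}\bigg\|\int_0^\infty \tilde g(s)\,\d W_{\H}(s)\bigg\|_{\T^p(\IC^n)}^p
\lesssim \mathbb{E}\bigg\|(t,x)\mapsto \bigg(\int_0^t |\nabla e^{(t-s)\Delta}g(s,\cdot)(x)|_{\IC^n\otimes\H}^{2}\,\d s\bigg)^{1/2}\bigg\|_{\T^p}^p.
\end{equation*}

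To finish, I recognize the right-hand side as $\cQ$ applied to $g$, with the scalar modulus replaced by the Hilbert--Schmidt norm $|\cdot|_{\IC^n\otimes\H}$. Since the proofs of Theorems \ref{thm:quad1} and \ref{thm:quad2} rely only on pointwise Gaussian kernel bounds for $\sqrt t\,\nabla e^{t\Delta}$, $\T^p$--$\T^p$ estimates, the Calder\'on--Zygmund decomposition of Lemma \ref{lem:cz}, and the Nec\v{a}s/Bogovski\u{\i} inversion of the divergence, each of which interacts with $g$ only through its pointwise Hilbert norm, the arguments transfer verbatim to $\H$-valued inputs (alternatively, expand in an orthonormal basis $(h_k)$ of $\H$, apply $\cQ$ componentwise, and use that $\T^p(\ell^2)=\T^p(\H)$ via the UMD lattice structure). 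Thus
\begin{equation*}
\bigg\|(t,x)\mapsto \bigg(\int_0^t |\nabla e^{(t-s)\Delta}g(s,\cdot)(x)|_{\IC^n\otimes\H}^{2}\,\d s\bigg)^{1/2}\bigg\|_{\T^p}\lesssim \|g\|_{\T^p(\H)}
\end{equation*}
for all $p\in(1,\infty)$, and raising to the $p$th power and taking $\mathbb{E}$ yields \eqref{eq:Tpstoch} for simple adapted $g$. Extension to all adapted $g\in \L^p(\Omega;\T^p(\H))$ is then automatic from Definition \ref{def:adapted}.

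\textbf{Main obstacle.} The only non-cosmetic point is justifying the Hilbert-valued extension of the $\cQ$-boundedness: straightforward for the Gaussian-kernel argument underlying Theorem \ref{thm:quad1}, but requires some bookkeeping in the Calder\'on--Zygmund part of Theorem \ref{thm:quad2}, where one must check that every pointwise estimate on $|b_i|$ and every use of the Poincar\'e inequality and the inverse of the divergence on $M_i$ still works when these objects take values in $\H$. This is standard but the sole place where a proof writer must verify that nothing scalar-specific was used.
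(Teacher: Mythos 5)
Your proposal is correct and follows essentially the same route as the paper: the same lifted integrand $\tilde g$, the same appeal to the It\^o isomorphism and UMD square function estimate from the proof of Lemma \ref{lem:Smap} in $\T^{p}(\IC^{n})$, the same reduction to the $\H$-valued quadratic operator $\cQ$ handled by Theorems \ref{thm:quad1} and \ref{thm:quad2}, and the same density argument. The one point you flag as needing care (that the proofs of the $\cQ$-bounds transfer to $\H$-valued inputs) is exactly the point the paper also relies on, and disposes of even more briefly than you do; your only slip is cosmetic, namely calling the integrand $\T^{2}(\IC^{n})$-valued where it should be $\T^{2}(\IC^{n}\otimes\H)$-valued, as your own display with $|\cdot|_{\IC^{n}\otimes\H}$ already reflects.
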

\begin{proof}
Let $g$ be a simple adapted process as in \eqref{eq:sap}.
For $s>0$, define 
$$\tilde{h}(s)\colon (t,x)\mapsto 1_{[0,t]}(s) \nabla e^{(t-s) \Delta} g(s,.)(x).$$
 Reasoning as in the beginning of the proof of Lemma \ref{lem:Smap}, and using the square function estimates for $\nabla e^{(t-s)\Delta}$ applied to  $g(s,.) \in \L^{2}(\Omega; \L^{2}(\R^{n};\H))$ for each $s$ (see the beginning of Section \ref{sec:quadratic}),  one can see that   $s \mapsto \tilde h(s)$ is an adapted 
$\L^{2}(\R_{+}\times \R^{n};\IC^n \otimes \H)$-valued process and that
 $$
 \bigg(\int  _{0} ^{t} \nabla e^{(t-s) \Delta} g(s,.)\, \d W_{\H}(s)\bigg) (x)=\bigg(\int_{0}^{\infty}   \tilde{h}  (s)\,  \d W_{\H}(s)\bigg) (t, x).$$
 Proceeding as in the proof of Lemma \ref{lem:Smap} for $\tilde h $ in 
 the  UMD space $\T^p( \IC^n)$, and interpreting back in terms of $g$, 
gives us
\begin{align*}
\mathbb{E}&\bigg\|(t,x)\mapsto \bigg( \int  _{0} ^{t} \nabla e^{(t-s) \Delta} g(s,.)\, \d W_{\H}(s)\bigg) (x)\bigg\|_{\T^{p}(\mathbb{C}^{n})} ^{p} 
\\ & \quad
 \eqsim \mathbb{E} \bigg\|(t,x) \mapsto \bigg(\int _{0} ^{t} |\nabla e^{(t-s) \Delta}g(s,.)(x)|_{\IC^n\otimes \H}^{2}\, \d s\bigg)^{\frac{1}{2}}\bigg\|_{\T^{p}} ^{p}.
\end{align*}
Given that $e^{t \Delta}$ acts on $\H$-valued functions component-wise, we can apply Theorem \ref{thm:quad1} and Theorem \ref{thm:quad2} to obtain that
$$
\mathbb{E} \bigg\|(t,x) \mapsto \bigg(\int _{0} ^{t} |\nabla e^{(t-s) \Delta}g(s,.)(x)|_{\IC^n\otimes \H}^{2}\, \d s\bigg)^{\frac{1}{2}}\bigg\|_{\T^{p}} ^{p} \lesssim \|g\|^{p} _{\L^p(\Omega;  \T^{p}(\H))}.
$$
This proves \eqref{eq:Tpstoch} for such $g$, and  we  conclude by density. 
\end{proof}

\begin{rem}
 By identification of $\T^2$ with $\V^2$, the estimate for $p=2$   is simply Da Prato's result. 
\end{rem}

\subsection{Main result}
We can now state the main theorem of this section, using at some point the following space of test functions
$$
\Phi^{p} := \{\varphi \in \mathcal{D}'(\R_{+}\times \R^{n}) \;;\; \varphi \in \E^{p},\; \partial_{t}\varphi \in \E^{p}, \; |\nabla \varphi| \in \E^{p}\},$$
with 
$$\|\varphi\|_{\Phi^{p}}= \|\varphi\|_{\E^{p}}+ \|\partial_{t}\varphi\|_{\E^{p}}+ \| |\nabla \varphi|\|_{\E^{p}}.
$$

Note that $\Phi^{p} \hookrightarrow C^{1/2}([0,\infty);\L^{p})$ as proven in Proposition \ref{prop:testclass}, so  in particular it follows that  for $\varphi\in \Phi^p$, $\varphi(0,.)$ exists and  $\|\varphi(0,.)\|_{\L^{p}} \lesssim \|\varphi\|_{\Phi^p}$. Note also that $\L^p(\Omega;\Phi^p)\cap \L^2(\Omega;\Phi^2)$ is dense in $\L^p(\Omega;\Phi^p)$ by Proposition \ref{prop:densetest}.

\begin{thm}
\label{thm:spde}
Let $1<p<\infty$ and let  $(\psi, F,g) \in  \L^{p}(\Omega; \mathcal{T}^{p})$ with $g$  
adapted.  There exists a unique $U \in \L^{p}(\Omega;\dot{\mathcal{T}}^{p}_{1})$  that is a pathwise weak solution of Problem \eqref{eq:SPDE} in the sense of Definition \ref{def:pathweak}. 
Moreover, for each $T \geq 0$, we have continuity with respect to the data:
\begin{equation}
\label{eq:UandgradUest}
\|(t,x)\mapsto \frac{\one_{[0,T]}(t)}{T^{1/2}}U(t,x)\|_{\L^{p}(\Omega;\T^{p})} +
\|\nabla U\|_{\L^{p}(\Omega;\T^{p}(\IC^n)
)} \lesssim \|(\psi,F,g)\|_{\L^{p}(\Omega; \mathcal{T}^{p}}
\end{equation}
with implicit constant independent of $T$.  
 
For 
this solution,  
the property $(\PW)$  of Definition \ref{def:pathweak} extends  to  all test functions in  $  \L^{p'}(\Omega;\Phi^{p'})$.  Moreover,  for any such $\varphi$ and almost all $\omega \in \Omega$, 
$t\mapsto V_{\varphi} (\omega,t) $ has a (unique)  representative $\tilde V_{\varphi} (\omega,t) $ that is absolutely continuous on $[0,\infty)$,  with
\begin{align*}
\tilde V_{\varphi} (\omega,t)  &= \mathbb{V}_{\varphi}(\omega,t) \quad \forall t > 0,
\\
\underset{t \to 0}{\lim} \, \tilde V_{\varphi} (\omega,t) &=  \langle \psi(.,\omega), \varphi(0,.,\omega) \rangle.
\end{align*}

\end{thm}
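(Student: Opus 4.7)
The plan is to construct solutions by decomposing Problem~\eqref{eq:SPDE} into a stochastic heat equation, estimated by Proposition~\ref{lem:ito} and the Littlewood--Paley equivalence \eqref{eq:LPest}, and a pathwise deterministic Lions-type problem, estimated by Theorems~\ref{thm:main1}--\ref{thm:main2}. First I would use Lemma~\ref{lem:approxdata} to reduce to an approximating sequence of data in $\L^p(\Omega;\mathcal{T}^p)\cap\L^2(\Omega;\mathcal{T}^2)$ with $g_\ell$ a simple adapted process. For each such datum, set
\[V(t) := e^{t\Delta}\psi + \int_0^t e^{(t-s)\Delta}g(s)\,\d W_\H(s),\]
and, for almost every $\omega$, let $W(\omega,\cdot)\in\L^2(\R_+;\Hdot^1(\R^n))$ be the global weak solution from Theorem~\ref{thm:CP} of $\partial_t W - \div(a(\omega)\nabla W) = \div H(\omega)$ with $W(0) = 0$ and $H := F + (a - I)\nabla V$. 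The $\Delta V$ contributions from the stochastic heat equation and from the splitting $a = I + (a - I)$ cancel, so $U := V + W$ is the candidate solution.

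Next I would derive the estimate \eqref{eq:UandgradUest}. The Littlewood--Paley equivalence and Proposition~\ref{lem:ito} yield $\mathbb{E}\|\nabla V\|_{\T^p(\IC^n)}^p \lesssim \mathbb{E}\|\psi\|_{\L^p}^p + \mathbb{E}\|g\|_{\T^p(\H)}^p$; an analogous bound on $T^{-1/2}\one_{[0,T]}V$ in $\T^p$ combines Lemma~\ref{lem:Smap} with the tent space bounds on $\widetilde\cQ_{[0,T]}$ from Section~\ref{sec:quadratic} and a direct $\T^p$ control of $\one_{[0,T]}e^{t\Delta}\psi$ via Minkowski and the $\L^p$-contractivity of the heat semigroup. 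Boundedness of $a - I$ gives $H\in\L^p(\Omega;\T^p(\IC^n))$, so applying $\cL$ (by Theorems~\ref{thm:main1}--\ref{thm:main2}) and $\widetilde\cL_{[0,T]}$ pathwise controls $\nabla W$ and $T^{-1/2}\one_{[0,T]}W$ in $\T^p$. For property $(\PW)$ on $\varphi\in\C_c^\infty$, I would compute $\d\langle U(t,\cdot),\varphi(t,\cdot)\rangle$ from It\^o's formula applied to $V$ and the weak form of the deterministic equation for $W$, yielding the standard stochastic weak formulation; a stochastic integration by parts to compare $\int_0^t\langle g,\varphi(s)\rangle\,\d W_\H(s)$ with $\langle\int_0^t g\,\d W_\H,\varphi(t)\rangle$ then shows $V_\varphi = \mathbb{V}_\varphi$. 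Passage to the limit from the approximating sequence is routine given the $\L^p(\Omega;\mathcal{T}^p)$ continuity of each term.

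The main obstacle is uniqueness. If $U_1, U_2$ are two pathwise weak solutions with the same data, the stochastic terms of $(\PW)$ cancel exactly for $U := U_1 - U_2$, leaving
\[\langle U(t,\cdot),\varphi(t,\cdot)\rangle = \int_0^t\bigl[\langle U,\partial_s\varphi\rangle - \langle a\nabla U,\nabla\varphi\rangle\bigr]\,\d s\]
in $\L^1(\Omega\times[0,T])$ for all $\varphi\in\C_c^\infty([0,\infty)\times\R^n)$. Lemma~\ref{lem:countable} reduces this to a countable family of test functions, and Lemma~\ref{lem:pathweak}(iv) handles the boundary behaviour at $t = 0$; together these produce a full-measure set of $\omega$ on which $U(\omega,\cdot)$ is a global weak solution of the homogeneous Cauchy problem with zero initial data and $\nabla U(\omega)\in\T^p(\IC^n)$. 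The deterministic uniqueness in Remark~\ref{rem:CP}, based on \cite[Thm.~3.11]{amp} when $p = 2$ and \cite{zaton} for general $p$, then forces $U = 0$.

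Finally, the extension of $(\PW)$ to $\varphi\in\L^{p'}(\Omega;\Phi^{p'})$ follows by density via Proposition~\ref{prop:densetest}: each term of $V_\varphi$ and $\mathbb{V}_\varphi$ is bilinear in $(U,\varphi)$ or in $(\text{data},\varphi)$, and its continuity in $\varphi$ on $\L^{p'}(\Omega;\Phi^{p'})$ follows from the dualities $(\T^p)^*=\T^{p'}$, $(\V^p)^*=\V^{p'}$ together with the embedding $\Phi^{p'}\hookrightarrow\C^{1/2}([0,\infty);\L^{p'})$ given by Proposition~\ref{prop:testclass}, which makes $\varphi(0,\cdot)$ well defined and controlled. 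The absolutely continuous representative $\tilde V_\varphi$ and its boundary value at $t=0$ are then given by Lemma~\ref{lem:pathweak}(iv) applied in this extended class.
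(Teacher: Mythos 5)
Your proposal follows essentially the same route as the paper's proof: decompose $U=V+W$ with $V$ the free stochastic heat flow (the paper's $V_0+V_1$) and $W$ the pathwise Lions correction (the paper's $V_2$), estimate each piece via the Littlewood--Paley equivalence, Proposition~\ref{lem:ito}, the quadratic operators of Section~\ref{sec:quadratic}, and Theorems~\ref{thm:main1}--\ref{thm:main2}; verify $(\PW)$ by It\^o/Fubini manipulations and density; prove uniqueness by passing to the deterministic theory on a full-measure set via Lemma~\ref{lem:countable}; and extend to $\L^{p'}(\Omega;\Phi^{p'})$ test functions via Propositions~\ref{prop:testclass} and~\ref{prop:densetest}.

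There is one genuine gap, for $p<2$, in the $\T^p$ control of the truncated free-evolution term. You propose to bound $\one_{[0,T]}e^{t\Delta}\psi$ in $\T^p$ ``via Minkowski and the $\L^p$-contractivity of the heat semigroup.'' Minkowski's integral inequality plus contractivity yields
\[
\bigl\|\one_{[0,T]}e^{t\Delta}\psi\bigr\|_{\V^p}\le\bigl(\textstyle\int_0^T\|e^{t\Delta}\psi\|_{\L^p}^2\,\d t\bigr)^{1/2}\le T^{1/2}\|\psi\|_{\L^p},
\]
which is a $\V^p$ bound and, moreover, requires $p\ge 2$ to apply Minkowski in the first place. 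For $p<2$ the continuous inclusion goes $\T^p\subsetneq\V^p$, so a $\V^p$ bound is strictly weaker than the required $\T^p$ bound, and the Minkowski step is unavailable in that direction. The paper handles all $1<p<\infty$ simultaneously by observing
\[
\int_0^T\fint_{B(x,\sqrt t)}|e^{t\Delta}\psi(y)|^2\,\d y\,\d t\le T\,\mathcal{N}(V_0)(x)^2,\qquad \mathcal{N}(V_0)(x):=\sup_{|y-x|<\sqrt t}|e^{t\Delta}\psi(y)|,
\]
and invoking the $\L^p$-boundedness of the parabolic nontangential maximal function, giving $\|\one_{[0,T]}e^{t\Delta}\psi\|_{\L^p(\Omega;\T^p)}\lesssim T^{1/2}\|\psi\|_{\L^p(\Omega;\L^p)}$. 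You should replace the Minkowski argument with this. A secondary, smaller imprecision: in the uniqueness step, reducing to the deterministic results requires first establishing (via \cite[Thms.~1.1 and~1.5]{zaton}, as the paper does) that the almost-sure global weak solution with gradient in $\T^p$ is represented by the propagator as $\Gamma(\cdot,0)f$ for some $f\in\L^p$, and then identifying $f=0$ by sending $t\to 0$ in the weak formulation; citing Remark~\ref{rem:CP} alone is somewhat circular since that remark defers precisely to this step of the proof.
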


Recall that talking about pathwise weak solutions in this context is meaningful, as shown in Lemma \ref{lem:pwsTp}.  The extension of $(\PW)$  then  involves the largest space of (random) test functions for which the weak formulation makes sense by duality. Contrary to the definition used in \cite{pronk-veraar, dhn}, we do not need to impose that $\div a(\omega, t,.)^* \nabla \varphi(\omega, t,.)$   and $\partial_{t} \varphi(\omega, t,.)$ belong to $\L^{p'}$ for all $t,\omega$, which would be unrealistic as we do not know the $\L^{p'}$ domains of  $\div a(\omega, t,.)^* \nabla$.

\begin{proof} 
We prove uniqueness first. We then construct a candidate solution with required estimates via a specific representation. Lastly, we prove it is a pathwise weak solution, and we verify $(\PW)$  with the extended class of test functions,  as well as  the  absolutely continuous redefinition,  in two steps. 
First, for $p=2$,  we exploit the $\L^2$ theory of the terms appearing in our representation. 
Secondly, when $p\ne  2$,  we use an approximation argument by density of $\L^{p}(\Omega; \mathcal{T}^{p}) \cap \L^{2}(\Omega; \mathcal{T}^{2})$ in $\L^{p}(\Omega; \mathcal{T}^{p})$.\\

\paragraph {\bf Step 1: Uniqueness.} Let $U,V \in \L^{p}(\Omega;\dot{\mathcal{T}}^{p}_{1})$ be two  pathwise weak solutions of Problem \eqref{eq:SPDE} in the sense of Definition \ref{def:pathweak},  and set  $\widetilde{U} = U-V \in \L^p(\Omega;\dot{\mathcal{T}}^{p}_{1})$. We shall show that, almost surely, $ \widetilde{U}$ is a global weak solution of $
\partial_{t} \widetilde{U} = \div\, a\nabla \widetilde{U}$, and conclude that $\widetilde U$ vanishes by applying a uniqueness result for such  deterministic problems. 

Let  $\varphi \in \C^{\infty}_{c}([0,\infty)\times \R^{n})$. 
   We follow $\omega$ in the notation from now on for this step. 
Using $(\PW)$ and by difference, there is a set $E_{\varphi}$ of probability $1$ such that, for any $\omega \in E_{\varphi}$,  
one has for  almost all $t>0$,
\begin{align}\label{eq:uniqueness2}
&\langle \widetilde{U} (\omega,.,t) ,\varphi(t,.) \rangle \\
& \nonumber \qquad =  - \int  _{0} ^{t} \langle
a (\omega,.,s) \nabla \widetilde{U} (\omega,.,s),\nabla \varphi(s,.) \rangle\, \d s +  \int  _{0} ^{t} \langle
\widetilde{U} (\omega,.,s) ,\partial_{s}\varphi(s,.) \rangle\, \d s. 
\end{align}
In particular, choosing $t$ large,  and recalling that  the $t$-support of $\varphi$ is bounded, we have obtained, for all $ \omega\in E_{\varphi}$,
\begin{equation}\label{eq:uniqueness3}
0 =  - \int  _{0}^{\infty} \langle
a (\omega,.,s) \nabla \widetilde{U} (\omega,.,s) ,\nabla \varphi(s,.) \rangle\, \d s +  \int  _{0} ^{\infty} \langle
\widetilde{U} (\omega,.,s),\partial_{s}\varphi(s,.) \rangle\, \d s. \end{equation}
So far, we  have  assumed  $\varphi \in \C^{\infty}_{c}([0,\infty)\times \R^{n})$.
Now, we let $E=\cap E_{\varphi}$ over $\varphi\in D$ where $D$ is the countable set built in Lemma \ref{lem:countable}, so that $E$ has full measure.  The above equality \eqref{eq:uniqueness3} holds therefore for all $\omega\in E$ and all $\varphi\in D$. 

Let us fix $\omega \in E$   and  we may also assume that  $\widetilde U (\omega,.)  \in  \L^{2}_{\loc} (\R_{+}; \H^1_{\loc}(\R^{n}))$  by Remark \ref{rem:l2loc}. We can deduce  from Lemma \ref{lem:countable}   that  $ \widetilde{U} (\omega,.) $ is a global weak solution of
$
\partial_{t} \widetilde{U} (\omega,.)  = \div\, a (\omega,.) \nabla \widetilde{U} (\omega,.) ,
$
in the sense of \cite[Defn.~1.1]{amp}.

 Let us now deduce that  $\widetilde{U} (\omega,.) =0$. Since $a (\omega,.) $ has real entries, we have that the propagator $\Gamma(\omega,t,s)$ of the equation with coefficients $A=a (\omega,.) $ as in Section  \ref{sec:ing}  
 belongs to  $\B(\L^{q}(\R^{n}))$  for all $q \in [1,\infty]$ and all $t\ge s\ge 0$, by Aronson's result \cite{aronson}, see also \cite[Section 4.3]{amp}. Using  $\nabla  \widetilde{U} (\omega,.)  \in \T^{p}$, we can thus apply Theorem \ref{thm:CP} when $p=2$ and \cite[Thms.~1.1 and~1.5]{zaton} for general $p$  to deduce that there exists $f (\omega,.)  \in \L^{p}$ such that $\widetilde{U} (\omega,.,t)  = \Gamma (\omega,t,0)  f (\omega,.) $ for all $t>0$. In particular, by \cite[Cor. 5.10 and Prop. 5.11]{amp},  $t\to \widetilde{U} (\omega,.,t) $ is continuous as a map from $[0,\infty)$ to $\L^{p}$. Coming back to \eqref{eq:uniqueness2} with an arbitrary $\varphi \in \C^{\infty}_{c}([0,\infty)\times \R^{n})$,  and letting $t\to 0$  along a subsequence, we have that
$$
\langle f (\omega,.),\varphi (0,.)\rangle = 0. $$
As $\varphi (0,.)$ can be an arbitrary element in $\C_{c}^{\infty}(\R^{n})$, 
we conclude that $f (\omega,.) =0$ and thus $\widetilde{U} (\omega,.) =0$.\\

\

\paragraph{\bf Step 2: Construction of a solution with tent space estimates.}

To prove existence, we construct $U$ by the ansatz  $V_{0}+V_{1}+V_{2}$  for the random functions defined for all $t>0$ by 
\begin{align*}
V_{0}(t,.) &=  e^{t\Delta}\psi,   \\
V_{1}(t,.) &=  \int  _{0} ^{t} e^{(t-s)\Delta} g(s, .) \, \d W_{\H}(s), \\
V_{2}(t,.) & = \int  _{0} ^{t} \Gamma(t,s)\div F(s,.)\, \d s +
\int  _{0} ^{t} {\Gamma(t,s)} \div (a(s,.)- I)\nabla (V_{0}+V_{1})(s, .)\, \d s,
\end{align*}
where, again,  $\Gamma(\omega, t,s)$ is for almost all $\omega$ the propagator defined  in Section~\ref{sec:ing} with $A(t,x)=a(\omega, t,x)$. Formally, $U$ is a  solution to \eqref{eq:SPDE}, and $(\PW)$ will be verified in the next steps. In this step, we  prove the tent spaces estimates  for $V_{0}, V_{1}, V_{2}$ and  add them  up.
 Lemma \ref{lem:approxdata} allows us to consider  data $(\psi, F, g)\in \L^2(\Omega;\mathcal{T}^2)\cap \L^p(\Omega;\mathcal{T}^p)$ with $g$ a simple adapted process. With such data, calculations are justified by the $\L^2$ theory. We shall see that, for $p=2$, calculations also hold when $g$ is $\L^2(\Omega;\T^2(\H))$ adapted. Once the estimates are obtained, the conclusion comes by density,  using also Proposition \ref{lem:ito}.   \\

We first use the classical  Littlewood-Paley estimate \eqref{eq:LPest} to get   
\begin{equation}
\label{eq:v0gradest}
\mathbb{E}\|\nabla V_{0}\|_{\T^{p}(\IC^{n})} ^{p} = 
\mathbb{E}\|(t,x)\mapsto \nabla e^{t \Delta}\psi(x)\|_{\T^{p}(\IC^{n})} ^{p} \lesssim  \mathbb{E} \|\psi\|_{\L^{p}} ^{p}.
\end{equation}
Using the $\L^p$ boundedness of the non-tangential maximal function defined by
$$\mathcal{N}(V_{0})(x):= \underset{|y-x|<\sqrt{t}}{\sup} |e^{t \Delta}\psi(y)|,$$ we also have the estimate
\begin{equation}
\label{eq:v0est}
\mathbb{E}\|(t,x)\mapsto 1_{[0,T]}(t)V_{0}(t,x)\|_{\T^{p}} ^{p} 
\lesssim
\mathbb{E}\|T^{1/2} \mathcal{N}(V_{0})\|_{\L^{p}} ^{p} \lesssim T^{p/2}\mathbb{E} \|\psi\|_{\L^{p}} ^{p}.
\end{equation}

Next, for simple adapted processes it is easy to check that 
\footnote{For  $g \in \L^2(\Omega;\T^2(\H))$,  $g$ adapted, this formula holds also in $\L^2(\Omega;\T^2(\H))$ by \cite[Lemma 2.4.1]{rl} and the identification between $\T^2(\H)$ and $\L^2(\R_{+}; \L^2(\R^n; \H))$. Hence this step extends naturally to data $(\psi, F, g)\in \L^2(\Omega;\mathcal{T}^2)$ with $g$ adapted.} 
$$
 \nabla V_{1}(t,.) =  \int  _{0} ^{t} \nabla e^{(t-s)\Delta} g(s, .) \, \d W_{\H}(s) \quad \forall t>0,
 $$
 and the fact that  $V_{1} \in \L^{p}(\Omega;\dot{\mathcal{T}}^{p}_{1})$, that is, 
\begin{equation}
\label{eq:v1gradest}
  \mathbb{E}\|\nabla V_{1}\|^{p}_{\T^{p} (\IC^n)} \lesssim \mathbb{E}
\|g\|_{\T^{p}(\H)}^{p},
\end{equation}
then follows from Proposition \ref{lem:ito}.    The same reasoning  as in Proposition \ref{lem:ito}  but without the gradient,  gives us 
\begin{equation}
\label{eq:v1est}
  \mathbb{E}\|(t,x)\mapsto \one_{[0,T]}(t)V_{1}(t,x)\|^{p}_{\T^{p} } \lesssim T^{p/2}\mathbb{E}
\|g\|_{\T^{p}(\H)}^{p}.
\end{equation}

It remains to look at $V_{2}$. For almost all $\omega \in \Omega$, $V_{2} (\omega,.) $ is the global weak solution in the sense of Theorem~\ref{thm:CP} of the equation $$\partial_{t} V_{2} (\omega,.)  - \div a (\omega,.) ) \nabla V_{2} (\omega,.) =\div F (\omega,.) 
+ \div (a (\omega,.) - I) \nabla (V_{0}+V_{1}) (\omega,.) $$
 with $F (\omega,.) 
+  (a (\omega,.) - I) \nabla (V_{0}+V_{1}) (\omega,.) \in \L^2(\R_{+};\L^2(\R^n,\IC^n))$ and zero initial value (we use here $(\psi,F,g) \in \L^{2}(\Omega;\mathcal{T}^{2})$ and the estimate just obtained with $p=2$). We  may  use Theorem \ref{thm:main1} and Theorem \ref{thm:main2} to obtain 
\begin{equation}
\label{eq:v2est}
\mathbb{E}\| {T^{-1/2}}{\one_{[0,T]}}V_{2}\|_{\T^{p}}^{p}+ \mathbb{E}\|\nabla V_{2}\|_{\T^{p}(\mathbb{C}^{n})}^{p} \lesssim \mathbb{E}
\|F+ 
(a- I)\nabla (V_{0}+V_{1})\|_{\T^{p}(\mathbb{C}^{n})}^{p}. 
\end{equation}
Using \eqref{eq:v0gradest}, \eqref{eq:v1gradest}, \eqref{eq:v2est} and then \eqref{eq:v0est}, \eqref{eq:v1est},   combined with the inclusion \eqref{eq:L1loc},  we have thus shown that $U=V_{0}+V_{1}+V_{2}$  belongs to $\L^{p}(\Omega;\dot{\mathcal{T}}^{p}_{1})$, and satisfies  the required estimates \eqref{eq:UandgradUest}.

 \

  \paragraph{\bf Step 3: Proof that $U$ is a pathwise  weak solution when   $(\psi,F,g) \in 
  \L^{2}(\Omega;\mathcal{T}^{2})$, $g$ adapted.}   Consider  $U=V_{0}+V_{1}+V_{2}$ just constructed (see the footnote in Step 2). In this step, we not only  
 prove that $U$ is a pathwise weak solution of Problem \eqref{eq:SPDE}, but we establish 
 the stronger fact announced in the statement  that  the extended property   $(\PW)$  holds  for such data   
  and $ \varphi \in \L^{2}(\Omega;\Phi^{2}) $,
namely the equality  $ V_{\varphi}=\mathbb{V}_{\varphi}$ in $\L^2(\Omega\times [0,T])$ for all $T>0$ (not just in $\L^1$). We do it for each $V_{i}$ and the claim follows by linearity. \\

 \paragraph{\em Step 3.1: Equality for  $V_{0}$.}   Fix $ \varphi \in \L^{2}(\Omega;\Phi^{2}) $. 
As $V_{0}= e^{t \Delta}\psi$ and $\psi \in \L^2(\Omega; \L^2(\R^n))$, we have $V_{0}\in \L^2(\Omega; \L^2(\R_{+}; \Hdot^1(\R^n))\cap \C_{0}(\R_{+}; \L^2(\R^n)))$ and easily obtain in $\L^2(\Omega)$ that for all    
 $t>0$, 
\begin{align*}
&\langle V_{0}(t,.),\varphi(t,.) \rangle 
- \langle \psi, \varphi(0,.) \rangle
\\ 
&\quad 
= - \int  _{0} ^{t} 
\langle \nabla V_{0}(s,.), \nabla \varphi(s,.)\rangle\,  \d s + \int  _{0} ^{t} 
\langle V_{0}(s,.), \partial _{s} \varphi(s,.) \rangle\,  \d s.\end{align*}
In particular, the equality also holds in $\L^2(\Omega\times [0,T])$ for all $T>0$. \\

\paragraph{\em Step 3.2: Equality for  $V_{1}$.  } It is organised in three substeps.  \\

\subparagraph{Step 3.2.1: Calculations when $g$ is a simple adapted process.}  For such a $g$, we first remark that $\Delta g \in \L^2(\Omega;\T^2(\H))= \L^2(\Omega; \L^2(\R_{+}; \L^2(\R^n;\H)))$, so that  for all $t>0$ , the following equalities hold in $\L^{2}(\Omega; \L^2(\R^n))$,
\begin{align*}
V_{1}&(t,.) - \int _{0} ^{t} g(s,.)\, \d W_{\H}(s) =  \int _{0} ^{t} (e^{(t-s) \Delta}-I)\, g(s,.)\, \d W_{\H}(s)\\
& =  \int _{0} ^{t} \bigg(\int _{s} ^{t} \Delta e^{(\tau-s) \Delta}g(s,.)\, \d \tau\bigg) \, \d W_{\H}(s) =  \Delta \int _{0} ^{t} \bigg(\int _{s} ^{t} e^{(\tau-s) \Delta}g(s,.)\, \d \tau\bigg) \, \d W_{\H}(s),
\end{align*}
where we commute $\Delta$ with the Bochner integral in $\tau$ using a classical theorem of Hille (see \textit{e.g.,} \cite[Thm 1.33]{hnvw1}), and with the It\^o integral as in Step 2 with the gradient (see also \cite[Lemma 2.4.1]{rl}).
 Therefore, for all $\phi \in \C^{\infty}_{c}(\R^{n})$, the stochastic Fubini theorem (see \cite{dapZ}), gives us that  
\begin{align*}
\bigg\langle V_{1}(t,.) &- \int _{0} ^{t} g(s,.)\, \d W_{\H}(s), \phi \bigg\rangle 
= \bigg\langle \int _{0} ^{t} \bigg(\int _{s} ^{t} e^{(\tau-s) \Delta}g(s,.)\, \d \tau\bigg) \, \d W_{\H}(s), 
\Delta \phi \bigg\rangle, \\
& = \bigg\langle \int _{0} ^{t} \bigg(\int _{0} ^{\tau} e^{(\tau-s) \Delta}g(s,.)\, \d W_{\H}(s)\bigg) \, \d \tau, 
\Delta \phi \bigg\rangle= \int _{0} ^{t} \langle V_{1}(\tau,.), 
\Delta \phi \rangle \, \d \tau,
\end{align*}
using Fubini theorem again in the last equality. 
 Since  $V_{1} \in \L^{2}( \Omega \times [0,T];   \H^{1}(\R^{n}))$ for all $T>0$   by \eqref{eq:v1gradest} and \eqref{eq:v1est}  when $p=2$, and since $\phi \in \C^{\infty}_{c}(\R^{n})$, we can integrate by parts in the right hand side, and obtain in $\L^2(\Omega)$ for all $t>0$:
\begin{align}
\label{eq:v1rep}
\langle V_{1}(t,.) - \int _{0} ^{t} g(s,.)\, &\d W_{\H}(s), \phi \rangle =  
- \int _{0} ^{t} \langle  \nabla V_{1}(\tau,.), 
\nabla \phi \rangle \, \d \tau.
\end{align}
\subparagraph{Step 3.2.2: Extension of \eqref{eq:v1rep}  to  adapted  $g \in \L^2(\Omega;\T^2(\H))$.} We proceed by density. First, it follows from the  It\^o isometry that 
$$
\mathbb{E}\bigg|\bigg\langle \int _{0} ^{t} g(s,.)\, \d W_{\H}(s), \phi \bigg\rangle\bigg|^{2} \lesssim 
\|\phi\|^{2} _{\L^{2}}\,  \mathbb{E} \int _{0} ^{t} \|g(s,.)\|_{\L^{2}(\R^{n};\H)}^{2} \, \d s
\leq \|\phi\|^{2} _{\L^{2}} \|g\|^{2} _{\L^2(\Omega;\T^2(\H))},
$$
for all  $t>0$. Next, as pointed out  in the footnote in Step 2,  $V_{1}(t,.)$ and $\nabla V_{1}(t,.)$ make sense for all adapted processes in $g \in \L^2(\Omega;\T^2(\H))=\L^2(\Omega;\V^2(\H))$ as elements of $\L^2(\Omega; \L^2(0,T; \L^2(\R^n))$ for all $T>0$ and  $\L^2(\Omega; \L^2(\R_{+}; \L^2(\R^n;\IC^n))$. Thus, the equality follows in $\L^2(\Omega)$ for almost every $t>0$. Moreover, for all $t>0$,  we have that
$$
\mathbb{E} \bigg|\int _{0} ^{t}  |\langle  \nabla V_{1}(\tau,.), 
\nabla \phi \rangle| \, \d \tau \bigg|^{2} \lesssim t \|\nabla \phi\|^{2} _{\L^{2}(\R^{n};\mathbb{C}^{n})}
 \|g\|^{2} _{\L^2(\Omega;\T^2(\H))},
$$
by \eqref{eq:v1gradest} for $p=2$.  

In particular, this shows that,  for all adapted processes $g \in \L^2(\Omega;T^2(\H))$, the left hand side of equality \eqref{eq:v1rep}  is  (redefined as an) absolutely continuous $\L^2(\Omega)$-valued function with derivative $- \langle  \nabla V_{1}(t,.), 
\nabla \phi \rangle$ for almost every $t>0$.
\Bk\\

\subparagraph{Step 3.2.3: Obtention of the desired formula for   all test functions in $\L^{2}(\Omega; \Phi^{2})$.} We  argue as in \cite[Theorem 4.9]{pronk-veraar} for any fixed  adapted process $g \in \L^2(\Omega;\T^2(\H))$.  
 Consider test functions of the form $\varphi \colon  (\omega,t,x)  \mapsto f(t)\phi(x)$ for $f \in \C^{1}([0,\infty))$ and $\phi \in  \C^\infty_{0}(\R^{n})$.    As   
$
t \mapsto  \langle V_{1}(t,.) - \int _{0} ^{t} g(s,.)\, \d W_{\H}(s), f(t) \phi  \rangle 
$
is  absolutely continuous  as an  $\L^{2}(\Omega)$-valued function, a calculation yields
\begin{align*}
&\qquad \langle  V_{1}(t,.) - \int _{0} ^{t} g(s,.)\, \d W_{\H}(s), \varphi(t,.) \rangle = \\ \nonumber & 
\int _{0} ^{t} \left(  - \langle
 \nabla V_{1}(s,.),\nabla \varphi(s,.) \rangle + \bigg\langle V_{1}(s,.) - \int _{0} ^{s} g(\tau,.)\, \d W_{\H}(\tau), \partial_{s}\varphi(s,.) \bigg\rangle \right)\,  \d s.
\end{align*}
Reorganising terms gives  for all $t>0$, the $ \L^2(\Omega)$ equality
\begin{align}
\label{eq:v1fullrep}
\langle & V_{1}(t,.),\varphi(t,.) \rangle - \bigg\langle \int  _{0} ^{t} g(s,.)\, \d W_{\H}(s), \varphi(t,.) \bigg\rangle=  
- \int  _{0} ^{t} \langle
 \nabla V_{1}(s,.),\nabla \varphi(s,.) \rangle \,  \d s \\ &
 \nonumber \qquad + \int  _{0} ^{t} \langle
V_{1}(s,.),\partial_{s}\varphi(s,.) \rangle\,  \d s - \int  _{0} ^{t} \bigg\langle \int  _{0} ^{s} g(\tau,.)\, \d W_{\H}(\tau), \partial_{s}\varphi(s,.) \bigg\rangle \,\d s.
\end{align}
Now for $T>0$,  remark that, using It\^o isometry and \eqref{eq:v1gradest} and \eqref{eq:v1est} again, each term of this equality belongs to $\L^2(\Omega \times [0,T])$ when  $\varphi\in \L^2(\Omega;  \H^1([0,T]\times \R^n))$. But,  using  density first, the above equality extends to all $\varphi$ being a constant function of $\omega$ valued in $\H^1([0,T]\times \R^n)$. Then by linearity it extends to all  simple functions on $\Omega$ valued in $\H^1([0,T]\times \R^n)$ and by density once again to $\L^2(\Omega;  \H^1([0,T]\times \R^n))$ which clearly contains the restrictions to  $\Omega\times [0,T]\times \R^n$ of
the functions in $\L^{2}(\Omega; \Phi^{2})$.   \\

\paragraph{\em Step 3.3: Equality for  $V_{2}$.} We already have seen that, almost surely, $V_{2}$ is the global weak solution in the sense of Theorem \ref{thm:CP} of 
$$\partial_{t} V_{2} - \div a \nabla V_{2} 
=\div F
+ \div (a- I) \nabla (V_{0}+V_{1}),$$
with $F
+  (a- I) \nabla (V_{0}+V_{1})\in \L^2(\Omega;\L^2(\R_{+}; \L^2(\R^n;\IC^n)))$ and zero initial value. 
In particular, $V_{2}\in \L^2(\Omega; \L^2(\R_{+}; \Hdot^1(\R^n))\cap \C_{0}(\R_{+}; \L^2(\R^n)))$.  
Fix  
$ \varphi \in \L^{2}(\Omega;\Phi^{2})$. 
Recalling \cite[Remark 3.3]{amp}, we thus obtain $t \mapsto \langle V_{2}(t,.),\varphi(t,.) \rangle$ is absolutely continuous on $[0,\infty)$ as an $\L^2(\Omega)$-valued function  and  that 
 for all $t> 0$, 
 \begin{align*}
&  \langle V_{2}(t,.),\varphi(t,.) \rangle=  \int  _{0} ^{t} 
\langle \partial _{s} V_{2}(s,.), \varphi(s,.) \rangle\, \d s + \int  _{0} ^{t} 
\langle V_{2}(s,.), \partial _{s} \varphi(s,.) \rangle\, \d s\\
& \quad = - \int  _{0} ^{t} \langle a(s,.)\nabla V_{2}(s,.), \nabla \varphi(s,.) \rangle \,\d s  
- \int  _{0} ^{t} 
\langle  F(s,.), \nabla \varphi(s,.) \rangle\, \d s \\
&- \int  _{0} ^{t} 
\langle (a(s,.)- I)\nabla (V_{0}+V_{1})(s,.), \nabla \varphi(s,.) \rangle\, \d s
+ \int  _{0} ^{t} 
\langle V_{2}(s,.), \partial _{s} \varphi(s,.) \rangle\, \d s. 
\end{align*}
Again, one can see the equality in $\L^2(\Omega\times [0,T])$ for all $T>0$.

\

\paragraph {\bf Step 4: Proof of $(\PW)$ and regularity when $(\psi,F,g) \in \L^{p}(\Omega; \mathcal{T}^{p})$, $g$ adapted.} 

We now consider a general data $(\psi,F,g) \in \L^{p}(\Omega; \mathcal{T}^{p})$, $g$ adapted. 
First, by Lemma \ref{lem:approxdata}, we may approximate $(\psi,F,g) $ in $ \L^{p}(\Omega; \mathcal{T}^{p})$  by  $(\psi_{\ell},F_{\ell},g_{\ell}) \in \L^{p}(\Omega; \mathcal{T}^{p})\cap \L^{2}(\Omega;\mathcal{T}^{2})$, $\ell\ge1$, with $g_{\ell}$ simple adapted processes. With this approximation, we have the estimates of Step 2 for the constructed functions $U_{\ell}$  with data $(\psi_{\ell},F_{\ell},g_{\ell})$    and the same ones passing to the limit in \eqref{eq:UandgradUest} for the limit $U$.  We claim  that the  formula in $(\PW)$ for $U$  holds in $\L^1(\Omega\times [0,T])$ for all $T>0$, which   implies  that   $U$ is a pathwise weak solution of Problem  \eqref{eq:SPDE}. To this end, we take $\varphi \in \L^{p'}(\Omega;\Phi^{p'})$ and set  $W_{ \varphi}(U,\psi,F,g)\coloneqq V_{\varphi}-\mathbb{V}_{\varphi}$, so that  we  have to show for any $T>0$ that $W_{ \varphi}(U,\psi,F,g)=0$ in $\L^1(\Omega\times [0,T])$.

We also approximate $\varphi$ in $\L^{p'}(\Omega;\Phi^{p'}) $ norm by a sequence $\varphi_{m} \in
\L^{p'}(\Omega;\Phi^{p'}) \cap \L^{2}(\Omega;\Phi^{2})$  
 as in Proposition \ref{prop:densetest}. 
  We fix $T>0$ and    prove that  
\begin{align*}
\alpha_{\ell,m}&\coloneqq\mathbb{E} \int _{0} ^{T} |W_{\varphi_{m}}(U, \psi,F,g)(t)-W_{\varphi_{m}}(U_{\ell},\psi_{\ell},F_{\ell},g_{\ell})(t)|\,  \d t 
\underset{\ell \to \infty}{\to} 0, \ \   \forall m\in \IN, \\
\beta_{m}&\coloneqq\mathbb{E} \int _{0} ^{T} |W_{ \varphi}(U,\psi,F,g)(t)-W_{\varphi_{m}}(U,\psi,F,g)(t)|\,  \d t 
\underset{m \to \infty}{\to} 0.
\end{align*}
As  we have proven in Step 3 that 
$W_{\varphi_{m}}(U_{\ell}, \psi_{\ell},F_{\ell},g_{\ell})=0$
 in $\L^2( \Omega  \times [0, T]) $,
for all $\ell,m \in \mathbb{N}$, we conclude  $W_{\varphi}(U, \psi,F,g)=0$ in $\L^1(\Omega\times [0,T])$, hence the claim. 

We begin with $\alpha_{\ell,m}$.  We use the triangle inequality, and consider the resulting two terms in $V_{\varphi}$ and  five terms in $\mathbb{V}_{\varphi}$  separately. 
Using duality and 
 \eqref{eq:UandgradUest}, we have that
\begin{align*}
&\mathbb{E} \int _{0} ^{T} |\langle U(t,.)-U_{\ell}(t,.),\varphi(t,.) \rangle| \, \d t \\
& \qquad \lesssim 
\|(t,x)\mapsto {1_{[0,T]}(t)}(U(t,x)-U_{\ell}(t,x))\|_{\L^{p}(\Omega;\,\T^{p})}
\|\varphi\|_{\L^{p'}(\Omega;\,\T^{p'})} \\ & \qquad \lesssim T^{1/2}\|(\psi-\psi_{\ell},F-F_{\ell},g-g_{\ell})\|_{\L^{p}(\Omega; \mathcal{T}^{p})}\|\varphi\|_{\L^{p'}(\Omega;\, \T^{p'})}.
\end{align*}
In the same way
\begin{align*}
&\mathbb{E} \int _{0} ^{T} \int  _{0} ^{t} | \langle a(s,.)(\nabla U(s,.)-\nabla U_{\ell}(s,.)),\nabla \varphi(s,.) \rangle| \, \d s\, \d t\\ & \qquad
\lesssim T\|(\psi-\psi_{\ell},F-F_{\ell},g-g_{\ell})\|_{\L^{p}(\Omega; \mathcal{T}^{p})} \||\nabla \varphi|\|_{\L^{p'}(\Omega;\:\T^{p'})},
\end{align*}
and
\begin{align*}
&\mathbb{E} \int _{0} ^{T} \int  _{0} ^{t}  |\langle
(U(s,.)-U_{\ell}(s,.),\partial_{s}\varphi(s,.) \rangle| \,  \d s\,  \d t\\ &
\qquad \lesssim T\|(\psi-\psi_{\ell},F-F_{\ell},g-g_{\ell})\|_{\L^{p}(\Omega; \mathcal{T}^{p})} \|\partial_{s} \varphi\|_{\L^{p'}(\Omega;\:\T^{p'})}.
\end{align*}

 The term involving $\psi$ is straightforward to treat and yields
\begin{align*}
\mathbb{E} | \langle \psi-\psi_{\ell}, \varphi(0,.) \rangle | &\leq \|\psi-\psi_{\ell}\|_{\L^{p}(\Omega;\, \L^{p})}
\|\varphi(0,.)\|_{\L^{p'}(\Omega; \:\L^{p'})} \\
&
 \lesssim \|\psi-\psi_{\ell}\|_{\L^{p}(\Omega;\, \L^{p})}  \|\varphi\|_{\L^{p'}(\Omega;\:\T^{p'})},
 \end{align*}
 using Proposition \ref{prop:testclass} in the last inequality. 
 
  For the term involving $F$, we directly obtain
\begin{align*}
\mathbb{E} \int  _{0} ^{T} 
\int  _{0} ^{t} 
&|\langle F_{\ell}(s,.)-F(s,.), \nabla \varphi(s,.) \rangle| \, \d s\, \d t \lesssim T \|F_{\ell}-F\|_{\L^p(\Omega;\, \T^{p}(\IC^n))}\||\nabla \varphi|\|_{\L^{p'}(\Omega;\,\T^{p'})}. 
 \end{align*}

 For the terms involving $g$, we use  the operator $S$ from the proof of  Lemma \ref{lem:Smap} and duality.  This is where we need our spaces  $\F^p$ and $\E^{p'}$. We have that
\begin{align*}
\mathbb{E} &  \int  _{0} ^{T}\bigg| \bigg\langle \int  _{0} ^{t} (g_{\ell}(s,.)-g(s,.))\, \d W_{\H}(s), \varphi(t,.) \bigg\rangle\bigg| \, \d t\\
& \qquad \lesssim \|S(|g_{\ell}-g|_{\H})\|_{\L^{p}(\Omega;\, \F^{p} )} \|\varphi\|_{\L^{p'}(\Omega; \,\E^{p'} )}\\ 
& \qquad \lesssim T^{1/2} \||g_{\ell}-g|_{\H}\|_{\L^{p}(\Omega; \,\F^{p} )} \|\varphi \|_{\L^{p'}(\Omega; \, \E^{p'} )}. 
\end{align*}
A similar argument gives that
\begin{align*}
\mathbb{E}  \int  _{0} ^{T}\int  _{0} ^{t}\bigg| \bigg\langle \int  _{0} ^{s} (g_{\ell}(\tau,.)-g(\tau,.))\, & \d W_{\H}(\tau), \partial_{s}\varphi(s,.) \bigg\rangle \bigg|\,  \d s \d t \\
&
 \lesssim \T^{3/2} \||g_{\ell}-g|_{\H}\|_{\L^{p}(\Omega; \,\F^{p} )}
  \| \partial_{s}  \varphi \|_{\L^{p'}(\Omega; \, \E^{p'} )}.
\end{align*}
Altogether, using the embeddings $\T^p\hookrightarrow \F^p$ and  $\E^{p'} \hookrightarrow \T^{p'}$, we have thus proven that
\begin{align*}
 \alpha_{\ell,m}
\lesssim  \max (1,T^{3/2})   \|(\psi-\psi_{\ell},F-F_{\ell},g-g_{\ell})\|_{\L^{p}(\Omega; \mathcal{T}^{p})}\|\varphi_{m} \|_{\L^{p'}(\Omega; \,\Phi^{p'})},
\end{align*}
for all $\ell,m \in \mathbb{N}$. 
The above reasoning also gives us
\begin{align*}
 \beta_{m}
\lesssim   \max (1,T^{3/2})  \|(\psi,F,g)\|_{\L^{p}(\Omega; \mathcal{T}^{p})}\|\varphi-\varphi_{m}\|_{\L^{p'}(\Omega; \,\Phi^{p'})}.
\end{align*}
The desired limits follow.

We finish the proof with the time regularity issue. Using similar estimates as in Lemma \ref{lem:pathweak},   
we see that $\mathbb{V}_{\varphi}  \in   \W^{1,1}([0,T]; \L^1(\Omega))$ for any $T>0$. In particular,  $\mathbb{V}_{\varphi}$  is absolutely continuous on $[0,\infty)$ as an $\L^1(\Omega)$-valued function with limit $\langle \psi, \varphi(0,.) \rangle$ at $t=0$. This implies  that $V_{\varphi}(\omega, t,.)$ almost surely has an  absolutely continuous representative on $\R_{+}$   equal to $\mathbb{V}_{\varphi}(\omega, t,.)$,  with the desired limit at $t=0$. 
 \end{proof}

\begin{rem}
When $p<2$, we prove in Step 2 stronger tent space $\T^p$ estimates  than we really need in Step 4 as  $\F^p\,  ( =\V^p)$ estimates on $U$, $F$ and $g$ seem to  suffice.   This is because we do not know  bounds for the $V_{2}$ term if $F\in  \L^p(\Omega; \V^p(\IC^n))$, $p\ne 2$. \end{rem}

\appendix

\section{The $\L^{p}(\Omega;\Phi^{p})$ test function class}

\begin{prop}
\label{prop:testclass}
For $1<p<\infty$, we have that $\Phi^{p} \hookrightarrow \C^{1/2}([0,\infty);\L^p)$ and in particular elements of $\Phi^{p}$ have a  trace in $\L^p$ at $t=0$. 
\end{prop}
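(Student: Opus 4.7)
The plan rests on the observation that $\E^p \hookrightarrow \V^p$ for every $1<p<\infty$: when $p\ge 2$ the two spaces coincide by definition, and when $p<2$ the inclusion $\T^p\hookrightarrow \V^p$ recalled in the Introduction from \cite{ahm} supplies the same conclusion. Consequently, if $\varphi \in \Phi^p$, then both $\varphi$ and $\partial_t \varphi$ lie in $\V^p = \L^p(\R^n; \L^2(\R_{+}))$ with norms $\lesssim \|\varphi\|_{\Phi^p}$. A standard Fubini-type argument then shows that, for almost every $x \in \R^n$, the slice $\varphi(\cdot,x)$ belongs to $\H^1(\R_{+})$ with distributional derivative equal to $\partial_t\varphi(\cdot,x)$; by the one-dimensional Sobolev embedding $\H^1(\R_{+}) \hookrightarrow \C_b([0,\infty))$ this slice admits a unique continuous extension to $[0,\infty)$, which we adopt as our working representative of $\varphi(\cdot,x)$.

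With this representative in place, the fundamental theorem of calculus together with Cauchy--Schwarz yields, for a.e. $x$ and all $0 \le s \le t$,
$$
|\varphi(t,x) - \varphi(s,x)| \le (t-s)^{1/2} \Big( \int_0^\infty |\partial_t\varphi(\tau,x)|^2 \, \d \tau \Big)^{1/2}.
$$
Taking the $\L^p$ norm in $x$ then gives the H\"older increment bound
$$
\|\varphi(t,\cdot) - \varphi(s,\cdot)\|_{\L^p} \le (t-s)^{1/2} \|\partial_t\varphi\|_{\V^p} \lesssim (t-s)^{1/2} \|\varphi\|_{\Phi^p}.
$$

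To control $\sup_t \|\varphi(t,\cdot)\|_{\L^p}$ and produce a trace at $t=0$, I would use the 1D Sobolev inequality
$$
\sup_{t \ge 0} |\varphi(t,x)|^2 \lesssim \|\varphi(\cdot,x)\|_{\L^2(\R_{+})}^2 + \|\partial_t\varphi(\cdot,x)\|_{\L^2(\R_{+})}^2
$$
and integrate in $x$ by taking the $\L^{p/2}$ norm: the triangle inequality when $p \ge 2$ (so $p/2 \ge 1$), and the subadditivity $(a+b)^{p/2} \le a^{p/2} + b^{p/2}$ when $p<2$ (so $p/2<1$). In either case, this delivers $\sup_{t \ge 0} \|\varphi(t,\cdot)\|_{\L^p} \lesssim \|\varphi\|_{\V^p} + \|\partial_t\varphi\|_{\V^p} \lesssim \|\varphi\|_{\Phi^p}$. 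Combined with the H\"older increment bound, this gives $\varphi \in \C^{1/2}([0,\infty); \L^p)$, and the trace at $t=0$ is obtained as the $\L^p$-limit of $\varphi(t,\cdot)$ as $t \to 0^+$, whose existence follows from the increment bound via the Cauchy criterion. No real obstacle arises: the argument is essentially one-dimensional Sobolev theory applied slice-wise via Fubini, bookended by the soft embedding $\E^p \hookrightarrow \V^p$ at the outset.
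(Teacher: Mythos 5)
Your argument is correct, and its computational core --- the fundamental theorem of calculus plus Cauchy--Schwarz applied to the time slices $\varphi(\cdot,x)$, followed by taking the $\L^p$ norm in $x$ --- is the same as the paper's; in particular your H\"older increment bound coincides with the paper's for $p\ge 2$. The genuine differences are two. First, for $p<2$ the paper does not invoke the embedding $\T^p\hookrightarrow\V^p$; it works directly with the $\T^p$ norm, inserting spatial averages over $B(x,\sqrt{s})$ and applying Jensen's inequality to convert $\L^p_x$ norms of time averages into tent-space quantities. Your use of the embedding as a black box is legitimate (the paper states it in the Introduction, citing \cite{ahm}, and uses it quantitatively in the proof of Lemma \ref{lem:pwsTp}), and it lets you treat both ranges of $p$ at once. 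Second, for the size of $\varphi(t,\cdot)$ the paper averages only over $[t/4,t/2]$ and obtains $\|\varphi(t,\cdot)\|_{\L^p}\lesssim (t^{-1/2}+t^{1/2})\|\varphi\|_{\Phi^p}$, which degenerates at $t=0$ and $t=\infty$ and yields boundedness near $0$ only a posteriori from the H\"older bound; your slice-wise inequality $\sup_{t}|f(t)|^2\le 2\|f\|_{\L^2(\R_{+})}\|f'\|_{\L^2(\R_{+})}$ gives the uniform bound $\sup_{t\ge 0}\|\varphi(t,\cdot)\|_{\L^p}\lesssim\|\varphi\|_{\Phi^p}$ directly, which is the cleaner statement if $\C^{1/2}([0,\infty);\L^p)$ is read as carrying its sup norm. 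The one point worth a sentence in a full write-up is the joint measurability in $(t,x)$ of the continuous representatives you select slice by slice (define them via Steklov averages, say); this is routine, and the paper's proof elides it as well.
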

\begin{proof}
We first remark that, for $t>\tau>t/4> 0$, and $f \in W^{1,1}([t/4,t])$, we have that
$ f(t) - f(\tau) = \int _{\tau} ^{t} f'(s)\, \d s$, and thus, integrating  $\tau$ in $[t/4,t/2]$, we get
$$
|f(t)| \leq \fint _{t/4} ^{t/2} |f(\tau)| \,\d \tau + 6 \fint _{t/4} ^{t} s|f'(s)| \,\d s.
$$
 Let $\phi \in \Phi^{p}$. We  use this to show first  $\phi(t,.)\in \L^p$ for all $t>0$ and then establish the  $\C^{1/2}$ inequality on $(0,\infty)$, which classically  provides the existence of a limit  as $t\to0$, hence proving the embedding and the trace. 

Case 1: let $2 \leq p<\infty$  and $t>\tau > 0$. First, plugging $f(t)=\phi(t,x)$ in the above inequality, it follows  by Cauchy-Schwarz inequality that 
\begin{align*}
\|\phi(t,.)\|_{\L^{p}} &\lesssim \bigg\|\fint _{t/4} ^{t/2} |\phi(s,.)| \, \d s\bigg\|_{\L^{p}} + \bigg\| \fint _{t/4} ^{t} s|\partial _{s}\phi(s,.)| \, \d s \bigg\|_{\L^{p}} \\
& \lesssim t^{-1/2} \|\phi\|_{\V^{p}} + t^{1/2} \|\partial_{s}\phi\|_{\V^{p}} \lesssim (t^{-1/2}+t^{1/2})\|\phi\|_{\Phi^{p}}.
\end{align*}
Secondly, we also have that
\begin{align*}
\|\phi(t,.)-\phi(\tau,.)\|_{\L^{p}} &\lesssim  \bigg\| \int _{\tau} ^{t} |\partial _{s}\phi(s,.)| \, \d s \bigg\|_{\L^{p}} \\
& \lesssim (t-\tau)^{1/2} \|\partial_{s} \phi\|_{\V^{p}} \leq (t-\tau)^{1/2} \|\phi\|_{\Phi^{p}}.
\end{align*}

Case 2: let $1 < p \leq 2$ and $t>\tau> 0$. Taking $f(t)=\phi(t,x)$ as above, and using Jensen's inequality,  then the averaging technique for integrals on $\R^n$, and again Jensen's inequality as $p\le 2$, we have that
\begin{align*}
&\|\phi(t,.)\|^{p} _{\L^{p}} 
\lesssim \fint _{t/4} ^{t/2} \| \phi(s,.)\|^{p} _{\L^{p}} \, \d s +  \fint _{t/4} ^{t} \| s\partial _{s}\phi(s,.)\|^{p}_{\L^{p}} \, \d s \\
&\quad = \int _{\R^{n}} \fint _{t/4} ^{t/2} \fint_{B(x,\sqrt{s})} |\phi(s,x)|^{p} \, \d y \, \d s \, \d x +    \int _{\R^{n}} \fint _{t/4} ^{t} \fint_{B(x,\sqrt{s})} |s\partial_{s}\phi(s,x)|^{p} \, \d y \, \d s \, \d x \\
& \quad \lesssim t^{-p/2} \|\phi\|^{p} _{\T^{p}} + t^{p/2} \|\partial_{s}\phi\|^{p} _{\T^{p}} \lesssim (t^{-p/2}+t^{p/2})\|\phi\|^{p} _{\Phi^{p}}.
\end{align*}
Similarly, we also have that
\begin{align*}
 \|\phi(t,.)-\phi(\tau,.)\|^{p} _{\L^{p}} &\lesssim (t-\tau)^{p} \bigg\| \fint _{\tau} ^{t} |\partial _{s}\phi(s,.)| \, \d s \bigg\|^{p} _{\L^{p}} \\
& 
\lesssim (t-\tau)^{p} \int _{\R^{n}} \fint _{\tau} ^{t} \fint _{_{B(x,\sqrt{s})}}|\partial_{s}\phi(s,x) |^{p} \, \d y \, \d s \,\d x \\
&\leq (t-\tau)^{p/2} \|\partial_{s}\phi\|^{p} _{\T^{p}} \leq (t-\tau)^{p/2} \|\phi\|^{p} _{\Phi^{p}}.
\end{align*}
\end{proof}

\begin{prop}
\label{prop:densetest}
Let $1 \leq p<\infty$. 
The set $\L^p(\Omega;\Phi^p)\cap \L^2(\Omega;\Phi^2)$ is dense in $\L^p(\Omega;\Phi^p)$.

\end{prop}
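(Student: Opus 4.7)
The plan is a two-step truncation followed by a comparison of $\Phi^{p}$ and $\Phi^{2}$ norms on compactly supported functions. Given $\varphi \in \L^{p}(\Omega;\Phi^{p})$, I would first truncate in the probability variable, setting $A_{m} = \{\omega : \|\varphi(\omega,\cdot)\|_{\Phi^{p}} \leq m\}$ and $\varphi^{(1)}_{m} = \one_{A_{m}} \varphi$. Dominated convergence in $\omega$, with dominating function $\|\varphi(\omega,\cdot)\|_{\Phi^{p}} \in \L^{p}(\Omega)$, would give $\varphi^{(1)}_{m} \to \varphi$ in $\L^{p}(\Omega;\Phi^{p})$ while $\|\varphi^{(1)}_{m}\|_{\L^{\infty}(\Omega;\Phi^{p})} \leq m$.

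Next I would choose $\eta \in \C^{\infty}_{c}([0,\infty))$ with $\eta \equiv 1$ on $[0,1]$, set $\chi_{k}(t,x) = \eta(t/k)\eta(|x|/k)$, and show that $\chi_{k}\varphi^{(1)}_{m} \to \varphi^{(1)}_{m}$ in $\L^{p}(\Omega;\Phi^{p})$ as $k \to \infty$. Writing $\partial_{t}(\chi_{k}\varphi^{(1)}_{m}) = \chi_{k}\partial_{t}\varphi^{(1)}_{m} + \varphi^{(1)}_{m}\partial_{t}\chi_{k}$, the first term converges to $\partial_{t}\varphi^{(1)}_{m}$ in $\E^{p}$ by dominated convergence (which holds in both $\V^{p}$ and $\T^{p}$ via pointwise domination of the underlying quadratic functional by an $\L^{p}$ envelope), while the second is controlled in $\E^{p}$ by $(C/k)\|\varphi^{(1)}_{m}\|_{\E^{p}}$, since multiplication by a bounded function contracts $\E^{p}$. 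The same reasoning applies to $\chi_{k}\varphi^{(1)}_{m}$ itself and to $\nabla(\chi_{k}\varphi^{(1)}_{m})$. A second dominated convergence in $\omega$, with the uniform bound $\lesssim m$, then upgrades this to convergence in $\L^{p}(\Omega;\Phi^{p})$, and a diagonal extraction $\varphi_{j} = \chi_{k(j)}\varphi^{(1)}_{m(j)}$ yields a candidate approximating sequence.

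It remains to verify that each $\chi_{k}\varphi^{(1)}_{m}$ belongs to $\L^{2}(\Omega;\Phi^{2})$. Since it is supported in $K_{k} = [0,2k] \times B(0,2k)$ independently of $\omega$ and has $\Phi^{p}$ norm $\lesssim m$, it suffices to show that any $u \in \Phi^{p}$ supported in a fixed compact $K$ satisfies $\|u\|_{\Phi^{2}} \lesssim_{K} \|u\|_{\Phi^{p}}$. When $p \geq 2$, $\E^{p} = \V^{p}$ and H\"older's inequality in the space variable on the compact spatial support gives $\|u\|_{\L^{2}(\R_{+}\times\R^{n})} \lesssim_{K} \|u\|_{\V^{p}}$, with the same estimate for $\partial_{t}u$ and $|\nabla u|$; this is routine. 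The hard part is the case $p<2$, where $\E^{p} = \T^{p}$ and no elementary comparison is available. Here I would invoke \cite[Lemma 3.1]{Amenta}, already used in the proof of Lemma \ref{lem:approxdata}, which asserts $\|u\|_{\T^{p}} \eqsim \|u\|_{\L^{2}(K)} = \|u\|_{\T^{2}}$ for $u$ supported in $K$, with constants depending on $K$. This nontrivial equivalence of tent space norms on compact sets is what ensures the constructed approximants lie in $\L^{\infty}(\Omega;\Phi^{2}) \subset \L^{2}(\Omega;\Phi^{2})$, completing the argument.
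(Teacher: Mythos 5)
Your reduction steps (truncation in $\omega$, smooth cutoff in $(t,x)$, dominated convergence) are sound, and your treatment of the case $p\ge 2$ via H\"older on the compact spatial support is exactly what the paper does. The genuine gap is in the case $p<2$, at the final norm comparison. The equivalence $\|u\|_{\T^{2}}\eqsim_{K}\|u\|_{\T^{p}}$ from \cite[Lemma 3.1]{Amenta} is a statement about functions supported in a compact subset of the \emph{open} upper half-space $\R_{+}\times\R^{n}=(0,\infty)\times\R^{n}$, i.e.\ with support bounded away from $t=0$; that is how it is used in Lemma \ref{lem:approxdata}, where the exhausting compacts $K_{\ell}$ avoid $\{t=0\}$. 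Your sets $K_{k}=[0,2k]\times \overline{B(0,2k)}$ touch $\{t=0\}$, and there the direction you need, $\|u\|_{\T^{2}}\lesssim_{K}\|u\|_{\T^{p}}$, is false: for $g_{\eps}=\one_{[\eps^{2},2\eps^{2}]\times B(0,\eps)}$ one computes $\|g_{\eps}\|_{\T^{2}}=\|g_{\eps}\|_{\L^{2}}\eqsim\eps^{1+n/2}$ while $\mathcal{A}g_{\eps}\eqsim\eps\,\one_{B(0,C\eps)}$ gives $\|g_{\eps}\|_{\T^{p}}\eqsim\eps^{1+n/p}$, so the ratio $\|g_{\eps}\|_{\T^{2}}/\|g_{\eps}\|_{\T^{p}}\eqsim\eps^{-n(1/p-1/2)}\to\infty$. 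A superposition of such bumps even produces $v\in\T^{p}$ with compact support in $[0,1]\times\overline{B(0,1)}$ and $v\notin\L^{2}$, so the membership itself, not just the estimate, can fail. You also cannot repair this by additionally cutting off near $t=0$: a cutoff $\eta(t/\delta)$ contributes $\varphi\,\delta^{-1}\eta'(t/\delta)$ to the time derivative, which is not small in $\E^{p}$ unless $\varphi(0,\cdot)=0$, and it would destroy the trace at $t=0$ that the test class is designed to retain.

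The paper's fix for $p<2$ is a mollification in the space variable: for $\phi\in\Phi^{p}$ with compact support, $\phi_{\eps}=\psi_{\eps}\star_{x}\phi(t,\cdot)$ is \emph{bounded} on its compact support because $\sup_{t}\|\phi(t,\cdot)\|_{\L^{p}}<\infty$ by the embedding $\Phi^{p}\hookrightarrow\C^{1/2}([0,\infty);\L^{p})$ of Proposition \ref{prop:testclass}, and bounded functions with compact support belong to every $\T^{q}$ (their conical square function is bounded and compactly supported). The convergence $\|\phi_{\eps}-\phi\|_{\T^{p}}\to0$ is then obtained after a further truncation onto a compact subset of the open half-space, where the Amenta equivalence legitimately applies. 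Some such regularisation (or another mechanism producing local $\L^{2}$ integrability in $x$ uniformly down to $t=0$) is needed to complete your argument.
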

\begin{proof} 
By construction of the Bochner integral,  linear combinations of elementary tensors are dense in $\L^p(\Omega;\Phi^p)$. We thus only need to consider one of them 
$(\omega,t,x)\mapsto  \one_{A}(\omega)  \phi(t,x),$
where $A$ is a measurable set in $\Omega$ and $\phi \in \Phi^{p}$.
As those tensors are in $\L^\infty(\Omega; \Phi^p)$, it remains to show that $\Phi^p \cap \Phi^2$ is dense in $\Phi^p$.

The first step is that one can restrict to functions in $\Phi^p$ with compact support in some $[0,T]\times B(0,R)$. Indeed, let $\eta$ be a smooth function on $[0,\infty)$ which is 1 on $[0,1/2]$ and compactly supported in $[0,1]$ and $\chi$ be a smooth compactly supported function on $\R^n$ with $\chi=1$ on the ball $B(0,1/2)$ and support contained in $B(0,1)$.   It is easy to show  that, if $\phi\in \Phi^p$, then $\phi_{T,R}\colon(t,x) \mapsto  \eta(t/T)\chi(x/R)\phi(t,x)$ belongs to  $\Phi^p$, and converges to $\phi$ in $\Phi^p$ when $T,R\to \infty$. 

When $p\ge 2$,  the result follows immediately since, in this case, $\E^p=\V^p$,  and compact support in $x$ implies that $\phi_{T,R}$  and its partial derivatives belong to $\V^2$ by H\"older inequality.

 When $p<2$ (\textit{i.e.,} $\E^p=\T^p$), we regularise the compactly supported $\phi_{T,R}$
by convolution. This  will create bounded functions with compact support.  Such functions belong to all $\T^q$ spaces, which  concludes the proof. 

 Let $\phi\in \Phi^p$ have  compact support in $[0,T]\times B(0,R)$.   Let $(\psi_{\varepsilon})_{0<\varepsilon<1}$ be a usual mollifying sequence in $\R^n$, where $\psi$ is smooth with support in the unit ball and $\int_{\R^n} \psi(x)\, \d x =1$. Set $\phi_{\varepsilon}(t,x)= (\psi_{\varepsilon} \star_{x}\phi(t,.))(x)$ where convolution is in the $x$ variable.  The function $\phi_{\varepsilon}$ is then compactly supported in   $[0,T]\times B(0,R+1)$. Next, 
using Proposition \ref{prop:testclass}, we  have that $\phi \in \C([0,T]; \L^p)$, and thus, for each $\varepsilon>0$,
$$
\sup_{(t,x)\in [0,T]\times B(0,R+1)} |\phi_{\varepsilon}(t,x)| \le \|\psi_{\varepsilon}\|_{\L^{p'}} \sup_{t\in [0,T]}\|\phi(t,.)\|_{\L^p} <\infty.
 $$
 Since  this convolution commutes with taking derivatives in $t$ and $x$, 
 we have that $\phi_{\varepsilon} \in \Phi^{q}$ for all $1<q<\infty$. It just remains to show that  $\|\phi_{\varepsilon}-\phi\|_{\T^p}\to 0$ as $\varepsilon\to 0$,  for $\phi \in \T^p$ with compact support in  $[0,T]\times B(0,R)$. At this point, one can approximate $\phi$ by $\tilde\phi$ 
 as in the proof of  Lemma \ref{lem:approxdata}, truncating $\phi$ so that 
$$\|\tilde\phi_{\varepsilon}-\tilde\phi\|_{\T^p}\eqsim  \|\tilde\phi_{\varepsilon}-\tilde\phi\|_{\L^2(K)}$$
for some  compact set $K\subset \R_{+}\times \R^n$ independent of $\varepsilon$. It remains to let $\varepsilon\to 0$ by well-known regularisation in Lebesgue spaces.  
 \end{proof}

\end{document}